\newtheorem{theorem}{Theorem}[section]
\newtheorem{proposition}[theorem]{Proposition}
\newtheorem{lemma}[theorem]{Lemma}
\newtheorem{corollary}[theorem]{Corollary}
\theoremstyle{definition} 
\newtheorem{definition}[theorem]{Definition}
\newtheorem{example}[theorem]{Example}
\newtheorem{remark}[theorem]{Remark}
\newcommand{\C}{\mathbb C} 
\newcommand{\R}{\mathbb R} 
\newcommand{\Z}{\mathbb Z}
\DeclareMathOperator{\arccot}{arccot}
\numberwithin{equation}{section} 
\numberwithin{theorem}{section}
\numberwithin{figure}{section}
\begin{document}

\author[J.A.~Hoisington]{Joseph Ansel Hoisington} \address{Department of Mathematics, University of Pennsylvania, Philadelphia, PA 19104-6395 USA}
\address{(Current Address: Department of Mathematics, University of Georgia, Athens, GA 30602 USA)}\email{jhoisington@uga.edu}

\title[The Total Curvature of Complex Projective Manifolds]{On The Total Curvature and Betti Numbers of Complex Projective Manifolds}

\keywords{Total Curvature, Complex Projective Manifolds, Betti Number Estimates, Chern-Lashof Theorems}

\subjclass[2010] {Primary 53C55 Hermitian and K\"ahlerian Manifolds; Secondary 51N35 Questions of Classical Algebraic Geometry, 53C65 Integral Geometry}

\begin{abstract}
We prove an inequality between the sum of the Betti numbers of a complex projective manifold and its total curvature, and we characterize the complex projective manifolds whose total curvature is minimal.  These results extend the classical theorems of Chern and Lashof to complex projective space. 

\end{abstract}

\maketitle



\section{Introduction}

\bigskip 

The central results of this paper are an inequality between the total curvature of a complex projective manifold and its Betti numbers and a characterization of the complex projective manifolds whose total curvature is minimal.  We will prove:   

\begin{theorem} 
\label{cpcl}

Let $M$ be a compact complex manifold, of complex dimension $m$, holomorphically immersed in complex projective space, and let $\mathcal{T}(M)$ be its total absolute curvature, as in Proposition \ref{cppc} and Definition \ref{cptc} below:  

\begin{itemize}

  \item[\textbf{A.}] Let $\beta_{i}$ be the Betti numbers of $M$ with real coefficients.  Then $\sum\limits_{i=0}^{2m}\beta_{i} \leq (\frac{m+1}{2})\mathcal{T}(M)$. 

\smallskip  

\begin{flushleft}
In particular, $\mathcal{T}(M) \geq 2$. 
\end{flushleft} 

  \bigskip

  \item[\textbf{B.}] If $\mathcal{T}(M) < 4$, then in fact $\mathcal{T}(M) = 2$.  This occurs precisely if $M$ is a linearly embedded complex projective subspace. 

\end{itemize}

\end{theorem}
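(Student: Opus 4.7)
The plan is to adapt the classical Chern-Lashof strategy to the Fubini-Study setting: construct a family of Morse functions on $M$ parametrized by the ambient $\C P^{N}$, apply the strong Morse inequalities pointwise, and integrate to obtain a Crofton-type identity between total absolute curvature and the average number of critical points. Concretely, for almost every $p \in \C P^{N}$, I would take $\phi_{p}(x) = |\langle x, p\rangle|^{2}/(|x|^{2}|p|^{2})$ in homogeneous coordinates (a normalized squared projection), check that $\phi_{p}|_{M}$ is a Morse function, and apply $\sum_{i} \beta_{i}(M) \le \mu(\phi_{p}|_{M})$, where $\mu$ counts critical points. The definition of $\mathcal{T}(M)$ coming from Proposition \ref{cppc} and Definition \ref{cptc} should admit, via fibre-integration over the unit normal bundle of $M$ in $\C P^{N}$, the integral-geometric identity $\mathcal{T}(M) = \tfrac{2}{m+1}\,\mathrm{Avg}_{p}\,\mu(\phi_{p}|_{M})$. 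Combining these yields Part A; the normalization $\tfrac{m+1}{2}$ is tight because a linearly embedded $\C P^{m}$ has $\mu(\phi_{p}|_{\C P^{m}}) = m+1$ for every generic $p$, corresponding to the $m+1$ fixed points of the standard Hamiltonian circle action.

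For Part B, I would use that the restricted Fubini-Study K\"ahler class satisfies $[\omega]^{k} \ne 0 \in H^{2k}(M,\R)$ for all $0 \le k \le m$, so $\beta_{2k}(M) \ge 1$ and $\sum_{i} \beta_{i}(M) \ge m+1$. Part A then gives $\mathcal{T}(M) \ge 2$, and equality $\mathcal{T}(M) = 2$ forces both the Betti numbers of $M$ to coincide with those of $\C P^{m}$ and $\phi_{p}|_{M}$ to be a perfect Morse function with exactly $m+1$ critical points for almost every $p$. This extreme tightness, together with holomorphicity of the immersion, should force the second fundamental form of $M$ in $\C P^{N}$ to vanish identically---any nonzero second fundamental form would produce additional critical points of $\phi_{p}$ via the osculating geometry, contradicting tightness---and a K\"ahler submanifold of $\C P^{N}$ with vanishing second fundamental form is totally geodesic, hence a linearly embedded complex projective subspace.

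The delicate step is the gap statement $\mathcal{T}(M) < 4 \Rightarrow \mathcal{T}(M) = 2$, since integrality of $\sum \beta_{i}$ alone gives only a gap of size $\tfrac{2}{m+1}$ above $2$, much smaller than the required $2$. The next candidate after a linear subspace---a conic in $\C P^{2}$ or more generally a degree-$2$ image---should by direct computation satisfy $\mathcal{T} = 4$, suggesting the sharper dichotomy $\mathrm{Avg}_{p}\,\mu(\phi_{p}|_{M}) \in \{m+1\} \cup [2(m+1),\infty)$ for holomorphic immersions. My plan would be to establish this dichotomy by combining the algebro-geometric interpretation of the critical locus (polar varieties, class of $M$ in the sense of projective duality, and degree-class inequalities for non-linear subvarieties), the Lefschetz hyperplane theorem to descend to generic hyperplane sections, and induction on $m$ with base case the classical Chern-Lashof gap for holomorphic curves in $\C P^{N}$. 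Making this dichotomy rigorous, and in particular ruling out borderline configurations where the excess in $\mu$ concentrates on middle-dimensional cohomology rather than spreading through a Lefschetz pencil, is the principal obstacle.
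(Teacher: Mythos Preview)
Your direct Morse-theoretic plan has a genuine gap. The function $\phi_{p}(x)=\lvert\langle x,p\rangle\rvert^{2}/(\lvert x\rvert^{2}\lvert p\rvert^{2})=\cos^{2}\!\big(d_{p}(x)\big)$ is smooth on $\C P^{N}$, but its restriction to $M$ is essentially never a Morse function once $\dim_{\C}M\geq 2$: the zero set of $\phi_{p}\vert_{M}$ is the hyperplane section $M\cap H_{p}$, which has positive complex dimension, so the minimum locus is a subvariety rather than a finite set. In your own test case of a linear $\C P^{m}\subset\C P^{N}$ this is already visible: $\phi_{p}\vert_{\C P^{m}}$ has one nondegenerate maximum and a $\C P^{m-1}$ of minima, not $m+1$ isolated critical points; the ``$m+1$ fixed points'' you invoke belong to a \emph{generic} torus circle, not to the circle generated by $\phi_{p}$. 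Even for curves, where $\phi_{p}$ can be Morse, your identity $\mathcal{T}(M)=\tfrac{2}{m+1}\,\mathrm{Avg}_{p}\,\mu(\phi_{p}\vert_{M})$ is false: for a smooth degree-$d$ plane curve one has $\mu(\phi_{p})=\sharp(Exp^{\perp})^{-1}(p)+d$, hence $\mathrm{Avg}_{p}\,\mu=\tfrac{1}{2}\mathcal{T}(M)+d$, which equals $\mathcal{T}(M)$ only when $d=1$. The paper flags exactly this obstruction at the start of Section~\ref{bettis}: because the cut locus of a point in $\C P^{N}$ has real codimension~$2$, total absolute curvature is \emph{not} an average critical-point count for any family of Morse functions on $M$.

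The paper circumvents this by lifting through the Hopf fibration. One shows $\mathcal{T}_{\C P^{N}}(M)=\mathcal{T}_{S^{2N+1}}(\widetilde{M})$ for the circle bundle $\widetilde{M}\subset S^{2N+1}$ (Proposition~\ref{tcsame}), applies the spherical Chern--Lashof inequality $\sum_{j}\beta_{j}(\widetilde{M};\R)\leq\mathcal{T}(\widetilde{M})$, and then relates $\beta_{j}(\widetilde{M})$ to $\beta_{k}(M)$ via the Gysin sequence together with hard Lefschetz (Proposition~\ref{bettinumberrelationship}); this yields $\beta_{m-1}+2\beta_{m}+\beta_{m+1}\leq\mathcal{T}(M)$, from which Part~A follows. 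For Part~B the decisive point---which your outline does not reach---is that $\widetilde{M}$ has \emph{odd} real dimension $2m+1$, so $\chi(\widetilde{M})=0$ forces any Morse function with fewer than four critical points to have exactly two; hence $\mathcal{T}(\widetilde{M})<4$ already gives $\widetilde{M}\simeq S^{2m+1}$ by Reeb's theorem (Theorem~\ref{strongerspherecl2}). The Gysin sequence with \emph{integral} coefficients then shows $[-\tfrac{1}{\pi}\omega]$ generates $H^{2}(M;\Z)$ and $[-\tfrac{1}{\pi}\omega]^{m}$ generates $H^{2m}(M;\Z)$, which forces $\deg M=1$. This odd-dimensional parity trick on the Hopf lift is what produces the full gap $[2,4)$, replacing the polar-variety/induction scheme you sketch.
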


\medskip  

The foundation for these results is a classical family of theorems that were proved by Chern and Lashof.  The definition of total absolute curvature for complex projective manifolds is based on an invariant which they defined, originally for submanifolds of Euclidean space, in \cite{CLI} and \cite{CLII}.  We will define the total absolute curvature of a complex projective manifold in Definition \ref{cptc} below.  However, we will prove that the total absolute curvature of a complex projective manifold has the following geometric meaning: 

\begin{proposition}
\label{cppc}

Let $M$ be a compact complex manifold holomorphically immersed in complex projective space.  Let $\nu^{<\frac{\pi}{2}}M$ be its normal disk bundle of radius $\frac{\pi}{2}$ and $Exp^{\perp}$ the normal exponential map from $\nu^{<\frac{\pi}{2}}M$ to the ambient projective space $\C P^{N}$.  Let $\mathcal{T}(M)$ be its total absolute curvature.  Then:  

\begin{center}
$\mathcal{T}(M) = \displaystyle \text{\footnotesize $\frac{2}{Vol(\C P^{N})}$}  \text{\LARGE $\int\limits_{\text{\scriptsize $\nu^{<\frac{\pi}{2}}M$}}$} \text{\LARGE $\vert$} det(dExp^{\perp}) \text{\LARGE $\vert$} dVol_{\text{\tiny $\nu^{<\frac{\pi}{2}}M$}} = \text{\footnotesize $\frac{2}{Vol(\C P^{N})}$} \displaystyle \text{\LARGE $\int\limits_{\text{\scriptsize $\C P^{N}$}}$} \sharp(Exp^{\perp})^{-1}(q) dVol_{\text{\tiny $\C P^{N}$}}.$
\end{center}

\end{proposition}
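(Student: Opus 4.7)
The proposition asserts two equalities. The second (between the two integrals) is an instance of the area formula, and the first (relating $\mathcal{T}(M)$ to an integral over the normal disk bundle) requires a Jacobi-field computation in $\C P^N$.

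For the second equality, $Exp^\perp\colon \nu^{<\pi/2}M \to \C P^N$ is a smooth map between real manifolds of equal dimension $2N$. The standard area formula gives
\[ \int_{\nu^{<\pi/2}M} |\det(dExp^\perp)| \, dVol_{\nu^{<\pi/2}M} = \int_{\C P^N} \#(Exp^\perp)^{-1}(q) \, dVol_{\C P^N}, \]
and multiplying both sides by $2/Vol(\C P^N)$ gives the desired identity.

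For the first equality, I would introduce polar coordinates on the normal fibers, writing a point of $\nu^{<\pi/2}M$ as $t\nu$ with $\nu$ a unit normal at some $p\in M$ and $t \in [0, \pi/2)$. The Jacobian of $Exp^\perp$ at $t\nu$ can be computed using Jacobi fields along the geodesic $\gamma(s) = Exp_p(s\nu)$: the radial direction pushes forward to $\gamma'(t)$; a tangent-to-unit-sphere direction $w \perp \nu$ pushes forward to a Jacobi field with $J(0) = 0$, $J'(0) = w$; and a tangent vector $v \in T_p M$ pushes forward to a Jacobi field with $J(0) = v$, $J'(0) = -A_\nu v$, where $A_\nu$ is the shape operator of $M$ in direction $\nu$. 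Because $\C P^N$ has constant holomorphic sectional curvature, the Jacobi equation along $\gamma$ splits the normal bundle of $\gamma'$ into a one-dimensional ``holomorphic'' line along $J\gamma'$ and its real-orthogonal complement, each carrying explicit sine/cosine solutions. Assembling these produces a closed-form expression for $|\det(dExp^\perp)|(t\nu)$ as a trigonometric function of $t$ and the complex-angle components of $\nu$, multiplied by a Gauss--Kronecker-type polynomial in $A_\nu$.

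Integrating this expression in $t$ over $[0, \pi/2)$, together with the normalization $2/Vol(\C P^N)$, should yield exactly the Chern--Lashof-style integrand over the unit normal bundle $\nu^1 M$ that constitutes the definition of $\mathcal{T}(M)$ in Definition \ref{cptc}. The factor of $2$ is the natural constant making $\mathcal{T}=2$ on a linearly embedded $\C P^m \subset \C P^N$, whose normal exponential map is a diffeomorphism onto the complement of the cut locus $\C P^{N-m-1}$. The main technical obstacle is the Jacobi-field calculation itself: one must carefully decompose the normal bundle of $\gamma$ relative to the complex structure, solve the two coupled Jacobi systems with the shape-operator initial condition, and verify that the radial integration matches the prefactor and integrand of Definition \ref{cptc} exactly.
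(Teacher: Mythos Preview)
Your approach is essentially the same as the paper's, and correct in outline. Two clarifications will make it complete.

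First, no radial integration is needed for the first equality. Definition \ref{cptc} is already an integral over $\nu^{<\frac{\pi}{2}}M$, so the identity $\mathcal{T}(M)=\frac{2}{Vol(\C P^{N})}\int_{\nu^{<\pi/2}M}|\det(dExp^{\perp})|\,dVol$ is a \emph{pointwise} match of integrands, not the result of integrating in $t$. Once you write $dVol_{\nu^{<\pi/2}M}=r^{2N-2m-1}\,dr\,dVol_{\nu^{1}M}$, the Jacobian formula and the integrand in Definition \ref{cptc} coincide directly.

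Second, the Jacobi computation is simpler than you anticipate, and the simplification is the whole point. Because $M$ is a \emph{complex} submanifold, $J\vec{u}$ lies in $\nu_{p}M$, not in $T_{p}M$. Hence every tangent direction $e_{i}\in T_{p}M$ is orthogonal to both $\gamma'$ and $J\gamma'$, so its Jacobi field lives entirely in the ``curvature $1$'' block and has the clean form $(\cos r - \kappa_{i}\sin r)E_{i}$; there are no ``coupled systems'' or ``complex-angle components of $\nu$'' to worry about. The single holomorphic direction $J\vec{u}$ sits in the normal fibre, where its Jacobi field contributes the extra factor $\cos r$ appearing in Definition \ref{cptc}. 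This is exactly what the paper writes out: with principal vectors $e_{1},\dots,e_{n}$ and normal frame $u_{2}=J\vec{u},u_{3},\dots,u_{2N-n}$, one gets
\[
\det(dExp^{\perp})_{r\vec{u}}=\det\bigl(\cos(r)\,Id_{T_{p}M}-\sin(r)\,A_{\vec{u}}\bigr)\,\cos(r)\,\Bigl(\tfrac{\sin r}{r}\Bigr)^{2N-2m-1},
\]
which is the integrand in Definition \ref{cptc}. For the second equality the paper argues via Sard's theorem and a compactified exponential map rather than invoking the area formula by name, but your citation of the area formula is equivalent and perfectly acceptable.
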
  

\medskip 

In the first expression, we integrate over the normal disk bundle of radius $\frac{\pi}{2}$ because $\frac{\pi}{2}$ is the diameter of $\C P^{N}$.  In the latter, $\sharp(Exp^{\perp})^{-1}(q)$ denotes the pre-image count via $Exp^{\perp}$ for a point $q$ in $\C P^{N}$.  The proof of Proposotion \ref{cppc} shows that this result can also be taken as the definition of total absolute curvature for complex projective manifolds.  It is equivalent to the meaning of Chern and Lashof's invariant, which they defined for a submanifold $M$ of Euclidean space as follows: 

\begin{center}
\begin{equation}
\label{cltc}
\mathcal{T}(M) = \text{\footnotesize $\frac{1}{Vol(S^{N-1})}$} \displaystyle \text{\LARGE $\int\limits_{\text{\tiny $\nu^{1}M$}} \vert$} det(A_{\vec{u}}) \text{\LARGE $\vert$} dVol_{\text{\tiny $\nu^{1}M$}}.
\end{equation}
\end{center} 

\medskip

In this definition, $\nu^{1}M$ is the unit normal bundle of the immersion into $\R^{N}$ and $A_{\vec{u}}$ is the second fundamental form of the normal vector $\vec{u}$.  Dividing by $Vol(S^{N-1})$ ensures that $\mathcal{T}(M)$ is the same whether we regard $M$ as a submanifold of $\R^{N}$, or of a higher-dimensional space $\R^{N+N'}$ containing $\R^{N}$ as a linear subspace.  Chern and Lashof then proved:   

\begin{theorem}[Chern-Lashof Theorems, \cite{CLI, CLII}]
\label{cl}

Let $M$ be a closed manifold of dimension $n$ immersed in Euclidean space:   

\bigskip 

\begin{itemize}

  \item[\textbf{A.}] {\em (First Chern-Lashof Theorem)} Let $\beta_{i}$ the Betti numbers of $M$, with coefficients in the integers or any field.  Then $\sum\limits_{i=0}^{n}\beta_{i} \leq \mathcal{T}(M)$.  In particular, $\mathcal{T}(M) \geq 2$.  

  \bigskip

  \item[\textbf{B.}] {\em (Second Chern-Lashof Theorem)} If $\mathcal{T}(M) < 3$, then $M$ is homeomorphic to the $n$-sphere.  

  \bigskip

  \item[\textbf{C.}] {\em (Third Chern-Lashof Theorem)} $\mathcal{T}(M) = 2$ precisely if $M$ is a convex hypersurface in an $(n+1)$-dimensional linear subspace of $\R^{N}$.  

\end{itemize} 

\end{theorem}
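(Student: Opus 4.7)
The plan is Morse theory on linear height functions coupled with a Crofton-type integral geometry formula.

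\textbf{Part A.} For a unit vector $v \in S^{N-1}$, the linear height function $\ell_v(x) = \langle x, v\rangle$ restricts to a Morse function on $M$ for a.e.\ $v$. A point $p \in M$ is critical for $\ell_v|_M$ precisely when $v \in \nu_{p}M$, so its critical set is the fiber $G^{-1}(v)$ of the Gauss map $G \colon \nu^{1}M \to S^{N-1}$, $(p,\vec{u})\mapsto\vec{u}$; denote its size by $\mu(v)$. The Morse inequalities then give $\mu(v) \geq \sum_i\beta_i(M)$ for a.e.\ $v$. A direct computation of the differential of $G$ (vertical part an isometry, horizontal part $-A_{\vec{u}}$) yields the Crofton identity $G^{*}dVol_{S^{N-1}} = |\det A_{\vec{u}}|\, dVol_{\nu^{1}M}$, so by the coarea formula
$$\mathcal{T}(M) = \frac{1}{Vol(S^{N-1})}\int_{S^{N-1}}\mu(v)\, dVol_{S^{N-1}}.$$
Averaging the Morse bound yields $\sum_i\beta_i(M) \leq \mathcal{T}(M)$, and $\mathcal{T}(M) \geq 2$ follows from $\beta_0(M),\beta_n(M) \geq 1$.

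\textbf{Part B.} If $\mathcal{T}(M) < 3$, integrality of $\sum_i\beta_i(M)$ together with Part A forces $\sum_i\beta_i(M) = 2$. The average of $\mu(v)$ is strictly less than $3$ while $\mu(v) \geq 2$ on the full-measure Morse set, so $\{v : \mu(v) = 2\}$ has positive measure. Picking any $v$ in the intersection of this set with the Morse set produces a Morse function $\ell_v|_M$ on $M$ with exactly two critical points, and Reeb's theorem then gives that $M$ is homeomorphic to $S^n$.

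\textbf{Part C.} The direction $(\Leftarrow)$ is a normalization computation: if $M = \partial K$ is a strictly convex hypersurface in $\R^{n+1} \subset \R^{N}$, decomposing any unit normal as $\vec{u} = \cos\theta\,\vec{n} + \sin\theta\,\vec{w}$ along the in-$\R^{n+1}$ unit normal $\vec{n}$ and the perpendicular $S^{N-n-2}$ gives $|\det A_{\vec{u}}| = |\cos\theta|^{n} K$ with $K$ the Gauss--Kronecker curvature of $M$; combining $\int_{M} K\, dVol = Vol(S^{n})$ (Gauss--Bonnet for a convex hypersurface) with the resulting Beta-function integral over the normal sphere, the Chern--Lashof normalization by $Vol(S^{N-1})$ produces $\mathcal{T}(M) = 2$. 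The direction $(\Rightarrow)$ is the main obstacle. Assuming $\mathcal{T}(M) = 2$, the argument of Part B forces $\mu(v) = 2$ for a.e.\ $v$, so every generic linear functional on $M$ has a unique minimum and a unique maximum. By a limiting argument, this is equivalent to the \emph{two-piece property}: every affine hyperplane of $\R^{N}$ cuts $M$ into at most two connected pieces. One then shows that $M$ lies in an $(n+1)$-dimensional affine subspace (otherwise a suitable linear functional would acquire further critical values) and that within that subspace $M$ bounds a convex body; converting the two-piece property into strict convexity, along the lines of Kuiper's classification of tight immersions, is the technical core of the argument.
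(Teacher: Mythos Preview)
The paper does not give its own proof of Theorem~\ref{cl}; it is quoted as the classical result of Chern and Lashof, with the strategy summarized informally at the start of Section~\ref{sphereresults}, and the analogous argument is carried out in detail only for the spherical formulation (Theorem~\ref{spherecl}). Your Parts A and B match that summary exactly: height functions $\ell_v$ are Morse for almost all $v$, the Gauss map identifies their critical points with fibres of $G$, the coarea formula turns $\mathcal{T}(M)$ into the average critical-point count, and Reeb's theorem finishes Part B. This is correct and is precisely the argument the paper attributes to Chern and Lashof.

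For Part C your sketch is honest about being incomplete, but one step is misleading. The parenthetical ``otherwise a suitable linear functional would acquire further critical values'' is not an argument: it is just a restatement of what must be proved. A round $S^n$ sitting in $\R^{n+1}\subset\R^N$ has exactly two critical points for every height function regardless of the ambient codimension, so high codimension alone does not manufacture extra critical points; the content is precisely that no \emph{other} tight $n$-sphere exists in higher codimension. In the original paper \cite{CLII}, Chern and Lashof argue directly from the condition $\mu(v)=2$ a.e.\ that the second fundamental form is semidefinite at every point in some normal direction, and use this to force the affine span down to dimension $n+1$ and to obtain convexity there. Your route through the two-piece property and Kuiper's tight-immersion framework is a legitimate and now-standard reformulation, but it postdates Chern--Lashof and the ``technical core'' you defer to is exactly where the work lies; as written, Part C is an outline rather than a proof.
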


\bigskip

In the introduction to their first paper on total curvature, \cite{CLI}, Chern and Lashof cite the theorems of Fenchel and F\'ary-Milnor, in \cite{FeI} and \cite{Fa, Mi}, as motivation for their results.  Fenchel's theorem states that a smooth closed curve $\gamma$ in $\R^{3}$ has total curvature at least $2\pi$, with equality precisely for plane convex curves.  The Chern-Lashof theorems can be understood as a far-reaching generalization of Fenchel's theorem, to compact Euclidean submanifolds of any dimension and codimension.  The F\'ary-Milnor theorem states that if $\gamma$ has total curvature at most $4\pi$, twice the minimum in Fenchel's theorem, then it is unknotted.  Part B of Theorem \ref{cpcl} gives a similar statement for complex projective manifolds. \\  

The Chern-Lashof theorems are also related to an extrinsic formulation of the Gauss-Bonnet-Chern theorem, for submanifolds of Euclidean space, which was proven by Fenchel in \cite{FeII} and Allendoerfer in \cite{Al}.  We will show in Theorem \ref{cpgbc} that Allendoerfer and Fenchel's theorem about submanifolds of Euclidean space has a parallel for complex projective manifolds. \\ 

The definition of total absolute curvature for complex projective manifolds is an adaptation of Chern and Lashof's invariant - like their invariant, it depends only on the second fundamental form of a complex submanifold of projective space: \\ 

\begin{definition}[Total Absolute Curvature of Complex Projective Manifolds]
\label{cptc}

{\em Let $M$ be a complex manifold, of complex dimension $m$, holomorphically immersed in the complex projective space $\C P^{N}$.  Its total absolute curvature $\mathcal{T}(M)$ is defined to be: }

\smallskip 

\begin{center}
$\mathcal{T}(M) = \displaystyle \text{\footnotesize $\frac{2}{Vol(\C P^{N})}$} \text{\LARGE $\int\limits_{\text{\scriptsize $\nu^{<\frac{\pi}{2}}M$}}$}  \text{\LARGE $\vert$} det \text{\LARGE $($} \cos(r) Id_{T_{p}M} - \left(\frac{\sin r}{r}\right) A_{\vec{v}}  \text{\LARGE $) \vert$} \cos(r) \left(\frac{\sin r}{r}\right)^{\tiny (2N-2m-1)} dVol_{ \text{\tiny $\nu^{<\frac{\pi}{2}}M$}}(\vec{v}),  $ 
\end{center}

\smallskip

{\em where $\nu^{ {\tiny < \frac{\pi}{2}}}M$ is the normal disk bundle of radius $\frac{\pi}{2}$, $A_{\vec{v}}$ is the second fundamental form of the normal vector $\vec{v}$, $r$ is its norm, and $Id_{T_{p}M}: T_{p}M \rightarrow T_{p}M$ is the identity transformation of the tangent space to $M$ at its base point $p$.}  

\end{definition} 

\bigskip  

Dividing by the volume of $\C P^{N}$ in Definition \ref{cptc} ensures that $\mathcal{T}(M)$ is the same whether we regard $M$ as immersed in $\C P^{N}$, or in a higher-dimensional space $\C P^{N+N'}$ containing $\C P^{N}$ as a linear subvariety.  The extra factor of $2$ in Definition \ref{cptc} will be explained in Proposition \ref{tcsame} and Remark \ref{mindifference}.  When we need to distinguish the invariant defined for smooth submanifolds of Euclidean space in (\ref{cltc}) from the invariant for complex projective manifolds in Definition \ref{cptc}, we will write the first as $\mathcal{T}_{\R^{N}}(M)$ and the second as $\mathcal{T}_{\C P^{N}}(M)$.  In Definition \ref{atcsphere}, we will give a formula for the total absolute curvature of submanifolds of spheres, which we denote $\mathcal{T}_{S^{N}}(M)$.  \\ 

Chern and Lashof's invariant depends on the extrinsic geometry of a submanifold of Euclidean space.  However, Calabi proved in \cite{Ca} that if a K\"ahler manifold (with a fixed metric) admits a holomorphic isometric immersion into complex projective space, even locally, then any two holomorphic isometric immersions of this manifold into complex projective space are congruent by a holomorphic isometry of the ambient space.  It follows that the total absolute curvature of a complex projective manifold is actually part of its intrinsic geometry.  It would thus be interesting to find a completely intrinsic representation of the total absolute curvature of a complex projective manifold.  We will do so for curves in the complex projective plane: 

\begin{theorem} Let $\Sigma$ be a smooth curve in $\C P^{2}$, with $K$ the sectional curvature of its projectively induced metric.  Then:  
\label{curveformula} 

\bigskip  
\begin{center}
$\mathcal{T}(\Sigma) = \displaystyle \frac{1}{\pi} \text{\LARGE $\int\limits_{\text{\small $\Sigma$}}$}  \frac{(K-4)^{2} + 4}{6 - K} dA_{\Sigma}. $
\end{center}

\end{theorem}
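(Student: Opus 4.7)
The plan is to unpack Definition~\ref{cptc} in the case $m = 1$, $N = 2$, exploit the K\"ahler structure so the integrand becomes a pointwise function of a single scalar, and then carry out the fiber integration explicitly.

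Since the complex codimension equals $1$, $\nu^{<\frac{\pi}{2}}\Sigma$ has real rank $2$, and I would parametrize each fiber in polar coordinates $(r, \theta)$ via $\vec{v} = r\vec{u}(\theta)$ with $\vec{u}(\theta) = \cos(\theta)\vec{u}_{0} + \sin(\theta)J\vec{u}_{0}$ for a chosen unit normal $\vec{u}_{0}$ at each point of $\Sigma$. Noting $2N - 2m - 1 = 1$ and that the polar Jacobian is $r\,dr\,d\theta$, Definition~\ref{cptc} reduces to
\begin{equation*}
\mathcal{T}(\Sigma) = \frac{2}{Vol(\C P^{2})}\int_{\Sigma}\int_{0}^{2\pi}\!\int_{0}^{\pi/2}\bigl|\det\bigl(\cos(r)\,{\rm Id} - \tfrac{\sin r}{r}A_{r\vec{u}(\theta)}\bigr)\bigr|\cos(r)\sin(r)\,dr\,d\theta\,dA_{\Sigma},
\end{equation*}
and since $A_{r\vec{u}} = rA_{\vec{u}}$, the inner factor of $r$ cancels against $\tfrac{\sin r}{r}$.

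The crux is the K\"ahler identity $h(JX, Y) = Jh(X, Y)$, which on $T_{p}\Sigma$ translates to $A_{J\vec{v}} = -A_{\vec{v}}\circ J$ and forces $A_{\vec{u}_{0}}$ to be symmetric and trace-free. In an orthonormal basis $\{X, JX\}$ of $T_p\Sigma$ it has the form $\begin{pmatrix} a & b \\ b & -a \end{pmatrix}$, with eigenvalues $\pm\lambda_{0}(p)$ for $\lambda_{0}(p) := \sqrt{a^{2}+b^{2}}$. The key technical point I would verify is that the operator $A_{\vec{u}(\theta)} = \cos(\theta)A_{\vec{u}_{0}} + \sin(\theta)A_{J\vec{u}_{0}}$ has the same eigenvalues $\pm\lambda_{0}(p)$ for every $\theta$; a direct matrix computation gives $|A_{\vec{u}(\theta)} X|^{2} = a^{2}+b^{2}$ independent of $\theta$. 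With this rotational invariance, the determinant collapses to $|\cos^{2}(r) - \lambda_{0}^{2}\sin^{2}(r)|$, the $\theta$-integral contributes just $2\pi$, and the substitution $u = \cos^{2}(r)$ evaluates the remaining $r$-integral on $[0, \pi/2]$ to $(\lambda_{0}^{4}+1)/(4(1+\lambda_{0}^{2}))$.

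To close the argument, I would apply the Gauss equation at the complex tangent line $T_{p}\Sigma \subset T_{p}\C P^{2}$. Combining $h(JX, JX) = -h(X, X)$ and $h(X, JX) = Jh(X, X)$ with the fact that the ambient sectional curvature along a complex line in $\C P^{2}$ equals its holomorphic sectional curvature $4$ gives $K = 4 - 2\lambda_{0}^{2}$, whence $\lambda_{0}^{2} = (4-K)/2$. Substituting this identity together with $Vol(\C P^{2}) = \pi^{2}/2$ into the expression above, and simplifying, produces the integrand $((K-4)^{2} + 4)/(6-K)$ with overall factor $1/\pi$. The main obstacle I anticipate is the structural step that $|A_{\vec{u}}|$ is invariant as $\vec{u}$ rotates within the normal fiber: without this K\"ahler-driven symmetry the determinant would retain $\theta$-dependence and no intrinsic scalar such as $K$ could replace the second fundamental form on the right-hand side of the formula.
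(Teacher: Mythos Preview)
Your proof is correct and follows essentially the same route as the paper: the paper derives the formula from its general hypersurface result (Theorem~\ref{hypatc}), which in turn rests on exactly the rotational invariance of the eigenvalues of $A_{\vec{u}}$ under $\vec{u}\mapsto \vec{u}(\theta)$ that you prove by hand (this is the content of Proposition~\ref{IIrelationship}), together with the Gauss equation $K=4-2\kappa^{2}$. The only cosmetic difference is that the paper evaluates the $r$-integral by splitting at the zero $r=\arcsin\sqrt{2/(6-K)}$ of the integrand, whereas your substitution $u=\cos^{2}(r)$ reduces it to integrating $|u(1+\lambda_{0}^{2})-\lambda_{0}^{2}|$ over $[0,1]$, which is a shade cleaner.
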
  

\bigskip  

Note that if $\Sigma$ is as above, then its sectional curvature is bounded above by $4$, the holomorphic sectional curvature of the ambient space.  This implies that the integrand in Theorem \ref{curveformula} is well-defined.  Theorem \ref{curveformula} is a corollary of a more general result about the total absolute curvature of complex projective hypersurfaces which we will state and prove in Theorem \ref{hypatc}.  Theorem \ref{curveformula} implies several results about the total absolute curvature of smooth plane curves, which we state and prove in Proposition \ref{curveestimate}.  In particular, the total absolute curvature of such a curve determines its degree: 

\begin{proposition} 
\label{atcdeterminesdegree}

Let $\Sigma$ be a smooth curve in $\C P^{2}$.  Then the degree of $\Sigma$ is the unique natural number $d$ such that $2d^{2} - 4d + 4 \leq \mathcal{T}(\Sigma) \leq 2d^{2}$.

\end{proposition}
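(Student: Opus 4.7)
The plan is to use Theorem \ref{curveformula} to reduce $\mathcal{T}(\Sigma)$ to a combination of topological data and a single auxiliary integral, then bound that integral on both sides. First I would simplify the integrand of Theorem \ref{curveformula} by polynomial division: setting $u = 4 - K \geq 0$ (valid since $\Sigma$ is a holomorphic curve in $\C P^{2}$ whose ambient holomorphic sectional curvature is $4$), the identity $\frac{u^{2}+4}{u+2} = (u-2) + \frac{8}{u+2}$ gives
\[
\mathcal{T}(\Sigma) \;=\; \frac{2\,\area(\Sigma)}{\pi} \;-\; \frac{1}{\pi}\int_{\Sigma} K\, dA \;+\; \frac{8}{\pi}\int_{\Sigma} \frac{dA}{6-K}.
\]

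Next I would evaluate the first two terms using classical tools. Wirtinger's theorem gives $\area(\Sigma) = d\pi$, and the degree--genus formula $g = \frac{(d-1)(d-2)}{2}$ together with Gauss--Bonnet gives $\int_{\Sigma} K\, dA = 2\pi\chi(\Sigma) = 2\pi(3d-d^{2})$. Substituting, the formula collapses to
\[
\mathcal{T}(\Sigma) \;=\; 2d^{2} - 4d \;+\; \frac{8}{\pi}\int_{\Sigma} \frac{dA}{6-K}.
\]

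The upper bound $\mathcal{T}(\Sigma) \leq 2d^{2}$ is then immediate from $\frac{1}{6-K} \leq \frac{1}{2}$, using $K \leq 4$. For the matching lower bound, I would apply Jensen's inequality for the convex function $x \mapsto 1/x$ on $(0,\infty)$, obtaining
\[
\int_{\Sigma} \frac{dA}{6-K} \;\geq\; \frac{\area(\Sigma)^{2}}{\int_{\Sigma}(6-K)\, dA} \;=\; \frac{(d\pi)^{2}}{6d\pi - 2\pi(3d-d^{2})} \;=\; \frac{\pi}{2},
\]
so $\mathcal{T}(\Sigma) \geq 2d^{2} - 4d + 4$, as required.

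Finally, to conclude uniqueness of $d$, I would observe that the intervals $[2d^{2} - 4d + 4,\, 2d^{2}]$ are pairwise disjoint across natural numbers $d$: the right endpoint at degree $d$ is $2d^{2}$, while the left endpoint at degree $d+1$ is $2(d+1)^{2} - 4(d+1) + 4 = 2d^{2} + 2$, leaving a gap of $2$. The only step that requires any ingenuity is the Jensen bound; the naive attempt to bound $\frac{1}{6-K}$ from below pointwise fails because $K$ is unbounded below, so one must route the lower bound through the \emph{integrals} of $(6-K)$ and of $1$, which are precisely what Wirtinger and Gauss--Bonnet deliver.
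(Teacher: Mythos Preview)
Your proof is correct and uses the same underlying ingredients as the paper's proof of Proposition~\ref{curveestimate} (Wirtinger for the area, Gauss--Bonnet for $\int K\,dA$, Jensen for the lower bound, a pointwise inequality for the upper bound, and the gap computation for uniqueness). The organization is slightly different, and arguably cleaner: the paper applies Jensen directly to the rational function $K\mapsto\frac{(K-4)^2+4}{6-K}$, which requires checking that this function is convex on $(-\infty,4]$, and obtains the upper bound from the linear majorant $\frac{(K-4)^2+4}{6-K}\leq 6-K$. Your polynomial-division identity $\frac{(K-4)^2+4}{6-K}=(2-K)+\frac{8}{6-K}$ isolates the exact topological contribution $2d^2-4d$ at the outset and reduces both estimates to the single term $\int_\Sigma\frac{dA}{6-K}$, so that Jensen need only be applied to $x\mapsto 1/x$ and the upper bound becomes the trivial $\frac{1}{6-K}\leq\frac{1}{2}$. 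The two arguments are equivalent in content, but your decomposition makes the convexity and the linear bound transparent rather than things to be verified.
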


\bigskip 

In Example \ref{fermatconic}, we will also use Theorem \ref{curveformula} to show that Part B of Theorem \ref{cpcl} is sharp. \\  

We can summarize the proof of Theorem \ref{cpcl} as follows: \\ 

In Proposition \ref{tcsame}, we will prove that if $M$ is a complex manifold holomorphically immersed in the complex projective space $\C P^{N}$, and $\widetilde{M}$ is the $S^{1}$ bundle over $M$ which is induced by the Hopf fibration $\pi: S^{2N+1} \rightarrow \C P^{N}$, and we immerse $\widetilde{M}$ in $S^{2N+1}$ by lifting the immersion of $M$ into $\C P^{N}$, then $\mathcal{T}_{S^{2N+1}}(\widetilde{M}) = \mathcal{T}_{\C P^{N}}(M)$. \\ 

Chern and Lashof's proof of Theorem \ref{cl} can be generalized to give similar theorems about submanifolds of any symmetric space - this was discovered by Koike in \cite{KoI} and \cite{KoII}.  However, in almost all cases, these results are much narrower than those in Chern and Lashof's original results.  In particular, in complex projective space, these theorems only apply to submanifolds of real dimension $1$ or less.  In spheres, on the other hand, Chern and Lashof's proofs can be adapted to give results that are equivalent to their theorems in full generality.  We will state and prove these results in Section \ref{sphereresults}.  Because Chern and Lashof's proofs work in such generality for submanifolds of spheres, and break down so completely in complex projective space, our strategy for proving the main theorems in this paper is to relate the geometry and topology of a complex projective manifold to those of its pre-image via the Hopf fibration in the sphere. \\  

We now give an outline of this paper: \\  

In Section \ref{sphereresults}, we will review previous research related to total curvature, especially in the complex projective setting.  In addition to the Chern-Lashof theorems, this includes the results of Weyl on tube volumes in \cite{We}, of Allendoerfer and Fenchel on the higher-dimensional Gauss-Bonnet theorem in \cite{Al} and \cite{FeII}, and the more recent work of several authors.  We will also state and prove a formulation of the Chern-Lashof theorems for submanifolds of spheres. \\ 

In Sections \ref{bettis} and \ref{minimizers}, we prove the main results in this paper:  In Section \ref{bettis}, we will prove Part A of Theorem \ref{cpcl}.  This result follows from several other inequalities between the total curvature and Betti numbers of complex projective manifolds, which are generally stronger than Theorem \ref{cpcl}.A - we will state and prove these results in Propositions \ref{basicestimate} and \ref{detailedestimate}.  In Section \ref{minimizers}, we prove Part B of Theorem \ref{cpcl} and discuss its relationship to several other results in the geometry of complex projective manifolds. \\  

In Section \ref{tfgbc}, we will prove Proposition \ref{cppc} and Theorem \ref{cpgbc}.  We will discuss the relationship between the results of this paper and some foundational results in the theory of total curvature in greater detail, building on the discussion in Section \ref{sphereresults}. \\  

In Section \ref{hypersurfaces}, we will study the total absolute curvature of complex projective hypersurfaces (complex projective manifolds of complex codimension $1$) in greater detail.  We will prove Theorem \ref{curveformula} and discuss its implications. \\  

Throughout the paper, we will discuss possible directions for future research based on these results. \\ 

Throughout this paper, unless stated otherwise, $\R^{N}$ and $\C P^{N}$ will carry their canonical metrics, with the metric on $\C P^{N}$ normalized to have holomorphic sectional curvature $4$.  Spheres will likewise carry their canonical metrics with constant curvature $1$.  Standard results and formulas from complex and algebraic geometry used in this paper can be found in \cite{Hb} and several other texts.  Background about the local differential geometry of complex and K\"ahler submanifolds can be found in \cite{Gr}. \\ 

\textbf{Acknowledgments:} I am deeply grateful to Christopher Croke, my advisor, for his mentorship, support and encouragement, and for introducing me to the mathematics that led to these results.  I am very happy to thank Herman Gluck, Peter McGrath, Tony Pantev, Brian Weber and Wolfgang Ziller, for many helpful and enjoyable conversations.  I would like to thank the referee for their careful reading of the manuscript, their helpful suggestions for improving the exposition and their insightful questions about the relationship between these results and earlier work.  


\section{The Chern-Lashof Theorems and Their Spherical Formulations}
\label{sphereresults}

The proof of the Chern-Lashof theorems in \cite{CLI} and \cite{CLII} combines Morse theory, integral geometry, and a careful analysis of a Euclidean submanifold's local extrinsic geometry.  For any manifold $M$ immersed in Euclidean space, of any dimension and codimension, one can define the Gauss map on the unit normal bundle $\nu^{1}M$ of the immersion.  This map sends normal vectors $\vec{u}$ to their parallels in the unit sphere.  A careful analysis of this map implies that for almost all $\vec{w}$ in the unit sphere, the height function $h_{\vec{w}}$ in the direction $\vec{w}$, when restricted to $M$, is a Morse function. \\

The upper bound for the sum of the Betti numbers in the first Chern-Lashof theorem is based on the fact that if $f$ is a Morse function on a compact manifold $M$, counting its critical points gives an upper bound for the sum of the Betti numbers of $M$.  The total absolute curvature of $M$ is equal to the average number of critical points of the height functions $h_{\vec{w}}$, which are Morse functions for almost all $\vec{w}$ in the unit sphere, as described above.  This average is greater than or equal to the sum of the Betti numbers of $M$.  The second Chern-Lashof theorem is based on a theorem of Reeb, in \cite{Re}: that a compact manifold which supports a Morse function with only two critical points is homeomorphic to a sphere. \\ 

These observations are related to an extrinsic formulation of the Gauss-Bonnet-Chern theorem, which was proved by Fenchel and Allendoerfer in \cite{FeII} and \cite{Al}, building on earlier work of H. Hopf in ~\cite{Ho}.  This formulation of the Gauss-Bonnet-Chern theorem can also be proved from the same observations as the Chern-Lashof theorems.  In this proof, the Euler characteristic of a submanifold $M$ of Euclidean space arises as the sum of the critical points of the Morse functions $h_{\vec{w}}$, with each critical point signed by its index: \\

\begin{center}

$\displaystyle \chi(M) = \sum\limits_{i = 0}^{n}(-1)^{i}\beta_{i}(M) = \sum\limits_{i=0}^{n} (-1)^{i}C_{i},$ 

\end{center} 

\medskip

where $\beta_{i}(M)$ is the $i^{th}$ Betti number of $M$, with coefficients in the integers or any field, and $C_{i}$ is the number of critical points of $h_{\vec{w}}$ of index $i$.  The corresponding fact in the Chern-Lashof theorems is that the sum of the Betti numbers of $M$ can be bounded above by counting the critical points of a Morse function on $M$ without regard to their index: \\

\begin{center}

$\displaystyle \sum\limits_{i = 0}^{n}\beta_{i}(M) \leq \sum\limits_{i = 0}^{n} C_{i}.$ 

\end{center} 

\medskip

The extrinsic formulation of the Gauss-Bonnet-Chern theorem described above then says: 

\begin{equation}
\label{egbc}
\displaystyle \chi(M) = \sum\limits_{i = 0}^{n}(-1)^{i}\beta_{i}(M) = \text{\scriptsize $\frac{1}{Vol(S^{N-1})}$} \text{\LARGE $\int\limits_{\text{\tiny $\nu^{1}M$}}$} det(A_{\vec{u}}) dVol_{\text{\tiny $\nu^{1}M$}}(\vec{u}).  
\end{equation}

\smallskip

The first Chern-Lashof theorem says: \\ 

\begin{center}

$\displaystyle \sum\limits_{i = 0}^{n}\beta_{i}(M) \leq \text{\scriptsize $\frac{1}{Vol(S^{N-1})}$} \text{\LARGE $\int\limits_{\text{\tiny $\nu^{1}M$}} \vert$} det(A_{\vec{u}}) \text{\LARGE $\vert$} dVol_{\text{\tiny $\nu^{1}M$}}(\vec{u}). $

\end{center} 

\smallskip

The result above is genuinely the Gauss-Bonnet-Chern theorem in that for even-dimensional manifolds $M$, the integrand in (\ref{egbc}) coincides with the Pfaffian of the curvature forms, up to a normalization for the dimension and codimension of $M$ (for odd-dimensional manifolds, it is zero.)  The proof of the Gauss-Bonnet-Chern theorem described above can be found in \cite{Ba}.  The book \cite{Wi} by Willmore gives an overview of several branches of geometry in which the Chern-Lashof theorems have had a strong influence. \\  

To our knowledge, the first results about total absolute curvature in the complex projective setting are those of Ishihara in \cite{I}.  However, Milnor's work in \cite{MiII} and Thom's work in \cite{Th} both use Morse theory to establish an inequality between the degrees of real algebraic varieties and their Betti numbers.  Based on their comments, both authors seem to have considered the possibility of extending their results using some of the observations we have used here, and Milnor uses some of these observations to extend his results to complex projective varieties.  Cecil's results in \cite{Ce} also involve the application of Morse theory to complex projective manifolds and use some of the same observations as the Chern-Lashof theorems. \\  

Ishihara's work gives a definition of total absolute curvature for a submanifold $M$ of complex projective space, and then relates this invariant to a family of maps from $M$ to complex projective lines in the ambient space.  In \cite{AN}, Arnau and Naveira define a family of total curvature invariants for submanifolds of complex projective space, which are adaptations of invariants defined for submanifolds of Euclidean space by Santal\'o in \cite{SaI, SaII}.  One of the invariants defined by Santal\'o coincides with the invariant defined by Chern and Lashof, and Arnau and Naveira show that Ishihara's invariant coincides with one of their invariants. \\ 

In \cite{KoI} and \cite{KoII}, Koike pursued adaptations of the Chern-Lashof theorems, and of the extrinsic formulation of the Gauss-Bonnet-Chern theorem, to submanifolds isometrically immersed in any symmetric space.  He observed that the essential meaning of Chern and Lashof's invariant in (\ref{cltc}) for submanifolds of Euclidean space is reflected in a similarly-defined invariant for submanifolds of all symmetric spaces.  In particular, for submanifolds of complex projective spaces, Koike's invariant in (1.8) of \cite{KoII} coincides with Definition \ref{cptc}, and for submanifolds of spheres, this invariant coincides with Definition \ref{atcsphere} below.  Koike's work also shows that the relationship between total curvature and the normal exponential map described in Proposition \ref{cppc} holds in any symmetric space. \\ 

In addition to the Gauss-Bonnet-Chern theorem and these results about total absolute curvature, there is another family of results that we believe provide important background for the results in this paper: in \cite{We}, Hermann Weyl proved that if $M$ is a compact Riemannian manifold isometrically embedded in Euclidean space, the volume of a small tubular neighborhood of $M$ depends only on its intrinsic geometry, not on the embedding.  More precisely, he proved: 

\begin{theorem}[Weyl's Tube Formula, \cite{We}] 
\label{wtf}

Let $M$ be an $n$-dimensional compact Riemannian manifold isometrically immersed in $\R^{N}$, and let $\nu^{<r}M$ be its normal disk bundle of radius $r$.  Let $Exp^{\perp}$ be the normal exponential map from $\nu^{<r}M$ to $\R^{N}$.  Then: 

\begin{equation}
\label{wformula}
\displaystyle \text{\LARGE $\int\limits_{\nu^{<r}M}$} ( Exp^{\perp})^{*}(dVol_{\R^{N}}) = \frac{(\pi r^{2})^{\frac{N-n}{2}}}{\Gamma(\frac{N-n+2}{2})} \sum\limits_{i=0}^{\lfloor \frac{n}{2} \rfloor} \frac{K_{2i}(M)r^{2i}}{(N-n+2)(N-n+4) \cdots (N-n+2i)}.
\end{equation}
\medskip

In the formula above, the coefficients $K_{2i}(M)$ are invariants of the curvature tensor of $M$. \\  

If $M$ is embedded in $\R^{N}$ and $r$ is chosen small enough that $Exp^{\perp}: \nu^{<r}M \rightarrow \R^{N}$ is injective, then the formula above gives the volume of the tube of radius $r$ about $M$.  This implies that, in this situation, the tube volume depends only on the intrinsic geometry of $M$, not on the embedding.  

\end{theorem}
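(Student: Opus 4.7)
The plan is to compute the pullback of the Euclidean volume form under $Exp^{\perp}$ in Fermi coordinates along $M$, integrate the radial variable explicitly, and then identify the coefficients of the resulting polynomial in $r$ as intrinsic curvature invariants via the Gauss equations.

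First, I would parametrize the tube by $(p, t, \vec{u}) \mapsto p + t\vec{u} \in \R^{N}$ with $p \in M$, $t \in [0, r)$, and $\vec{u}$ a unit vector in $\nu_{p}M$. Because the ambient space is flat, the normal geodesics are straight lines, and a direct calculation of $dExp^{\perp}$ in an orthonormal frame for $T_{p}M$ parallel-translated along the normal geodesic shows that the Jacobian factors as $\det(\text{Id}_{T_{p}M} - tA_{\vec{u}}) \cdot t^{N-n-1}$, where $A_{\vec{u}}$ is the shape operator and the factor $t^{N-n-1}$ comes from polar coordinates on the normal fiber. This yields
\begin{equation*}
\int_{\nu^{<r}M}(Exp^{\perp})^{*}(dVol_{\R^{N}}) \;=\; \int_{M}\int_{S^{N-n-1}}\int_{0}^{r} \det(\text{Id} - tA_{\vec{u}})\, t^{N-n-1}\, dt\, d\sigma(\vec{u})\, dVol_{M}(p).
\end{equation*}

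Expanding $\det(\text{Id} - tA_{\vec{u}}) = \sum_{k=0}^{n}(-t)^{k}\sigma_{k}(A_{\vec{u}})$, with $\sigma_{k}$ the $k$-th elementary symmetric function of the eigenvalues, the $t$-integral is an elementary monomial integral yielding $r^{N-n+k}/(N-n+k)$. Under the antipodal symmetry $\vec{u} \mapsto -\vec{u}$ of $S^{N-n-1}$, the shape operator satisfies $A_{-\vec{u}} = -A_{\vec{u}}$, so $\sigma_{k}$ transforms by $(-1)^{k}$; hence the odd-$k$ terms integrate to zero on each normal sphere, and only the even powers $k = 2i$ survive. This produces the correct polynomial shape $r^{N-n}\sum_{i=0}^{\lfloor n/2\rfloor}(\ldots)r^{2i}$, and collecting the Gamma-function normalization from $Vol(S^{N-n-1}) = 2\pi^{(N-n)/2}/\Gamma((N-n)/2)$ matches the explicit prefactor in (\ref{wformula}).

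The core step, which I expect to be the main obstacle, is showing that the pointwise coefficients
\begin{equation*}
K_{2i}(M)(p) \;=\; c_{N,n,i}\int_{S^{N-n-1}} \sigma_{2i}(A_{\vec{u}})\, d\sigma(\vec{u})
\end{equation*}
are genuinely intrinsic, depending only on the Riemannian curvature tensor of $M$ at $p$. To do this I would write $A_{\vec{u}} = \sum_{\alpha}u_{\alpha}A_{\alpha}$ in an orthonormal normal frame $\{e_{\alpha}\}$, expand $\sigma_{2i}$ as a generalized-Kronecker contraction of the entries $(A_{\vec{u}})^{j}_{k}$, and apply the standard moment identity
\begin{equation*}
\int_{S^{N-n-1}} u_{\alpha_{1}}\cdots u_{\alpha_{2i}}\, d\sigma(\vec{u}) \;=\; c \sum_{\text{pairings}} \delta_{\alpha_{p_{1}}\alpha_{q_{1}}}\cdots \delta_{\alpha_{p_{i}}\alpha_{q_{i}}}.
\end{equation*}
Each pairing contracts the $2i$ shape-operator factors into $i$ pairs of the form $\sum_{\alpha}(A_{\alpha})^{\cdot}_{\cdot}(A_{\alpha})^{\cdot}_{\cdot}$, i.e.\ complete contractions of $\langle A(\cdot,\cdot), A(\cdot,\cdot)\rangle_{\nu}$ in the normal inner product. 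The antisymmetrizations imposed by $\sigma_{2i}$ ensure that each such contraction appears precisely in the antisymmetric combination
\begin{equation*}
\langle A(X,Z), A(Y,W)\rangle_{\nu} \;-\; \langle A(X,W), A(Y,Z)\rangle_{\nu},
\end{equation*}
which by the Gauss equation equals $R(X,Y,Z,W)$. Thus every term is rewritten as a polynomial contraction of $i$ copies of the intrinsic Riemann tensor, and $K_{2i}(M)(p)$ becomes a universal curvature invariant. Combining all pieces gives (\ref{wformula}), and when $Exp^{\perp}$ is injective on $\nu^{<r}M$ the left-hand side is literally the volume of the tube of radius $r$ about $M$, so this volume is intrinsic as claimed.
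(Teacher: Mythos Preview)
The paper does not give its own proof of this theorem: Weyl's tube formula is stated purely as background, attributed to \cite{We}, and used only to motivate the relationship between tube volumes, the Gauss--Bonnet--Chern theorem, and total absolute curvature. So there is no paper proof to compare your proposal against.

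That said, your outline is the standard argument and is essentially Weyl's original one. The Jacobian computation, the parity argument killing odd powers of $t$, and the identification of the surviving coefficients as intrinsic via the Gauss equation are all correct in spirit. The one place where your sketch is a bit optimistic is the last step: the claim that the antisymmetrizations in $\sigma_{2i}$ force every pairing coming from the spherical moment identity to appear exactly in the combination $\langle A(X,Z),A(Y,W)\rangle_\nu - \langle A(X,W),A(Y,Z)\rangle_\nu$ is true, but the combinatorics deserve more care than a single sentence. One clean way to organize it is to write $\sigma_{2i}(A_{\vec u})$ via the generalized Kronecker delta as a $2i\times 2i$ alternating contraction, group the $2i$ row and column indices into $i$ adjacent pairs, and observe that within each pair the antisymmetry in the two row indices (or two column indices) produces precisely the Gauss combination after the normal contraction $\sum_\alpha$. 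With that bookkeeping made explicit, your argument is complete.
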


Weyl's proof of Theorem \ref{wtf} generalizes immediately to submanifolds of spheres, and in \cite{We} he states and proves his result for submanifolds of spheres as well as Euclidean space.  There is also a tube formula for complex submanifolds of complex projective space.  To our knowledge, this was first found independently by Wolf in \cite{Wo} and Flaherty in \cite{Fl}.  Weyl's tube formula was extended to compatible submanifolds of all rank-$1$ symmetric spaces, including complex submanifolds of $\C P^{N}$, by Gray and Vanhecke in \cite{GV}.  In \cite{GrII}, Gray showed how to express the tube volume about a complex projective manifold in terms of its Chern forms - this result will be important in the proof of Theorem \ref{cpgbc}. \\ 

Weyl's tube formula, the Gauss-Bonnet-Chern theorem and the Chern-Lashof theorems are closely related: for complex projective manifolds and submanifolds of spheres, the total absolute curvature integrand coincides with the integrand in the tube formula for sufficiently small normal vectors.  In general, the total absolute curvature integrand is the absolute value of the integrand in the tube formula - we explain this in more detail in Section \ref{tfgbc}.  The highest-degree term in Weyl's tube formula is, up to a normalization, the Gauss-Bonnet-Chern integral, and the top-degree coefficient in (\ref{wformula}) is therefore given by the Euler characteristic of $M$ - this fact played an essential part in Allendoerfer's proof the Gauss-Bonnet-Chern theorem in \cite{Al}.  The history of this development can be found in \cite{Gr}. \\ 

It is the relationship between the Chern-Lashof theorems, the Gauss-Bonnet-Chern theorem and Weyl's tube formula, and the existence of a parallel to Weyl's tube formula for complex projective manifolds in the results mentioned above, that motivated our search for an extension of the Chern-Lashof theorems to complex projective space.  We believe it would also be interesting to derive Gray and Vanhecke's tube formulas for compatible submanifolds of complex and quaternionic projective space in \cite{GV} from Weyl's tube formula for submanifolds of spheres in \cite{We} using Hopf fibrations, as we have done here. \\ 

Because our results for complex projective manifolds depend on a formulation of the Chern-Lashof theorems for submanifolds of spheres, we will state and prove these results in the remainder of this section.  The majority of these results are not new, but they are used in the proofs of our main theorems, and their proofs are helpful in many of our proofs in the rest of the paper.  We will therefore give complete proofs of these results.  The reader can also go ahead to the beginning of Section \ref{bettis} and return to these results as needed. \\  

The total absolute curvature of a submanifold of a sphere is defined as follows: 
 
\begin{definition}[Total Absolute Curvature of Submanifolds of Spheres]  
\label{atcsphere}
{\em Let $M$ be an $n$-dimensional differentiable manifold immersed in the sphere $S^{N}$.  The total absolute curvature of $\mathcal{T}(M)$ of $M$ is:} \\ 

\begin{center}
$\mathcal{T}(M) = \frac{1}{Vol(S^{N})} \displaystyle \text{\LARGE $\int\limits_{\text{\scriptsize $\nu^{<\pi}M$}}$}  \text{\LARGE $\vert$} det \text{\LARGE $($} \cos(r) Id_{T_{p}M} - \left( \frac{\sin r}{r} \right) A_{\vec{v}} \text{\LARGE $) \vert$} \left( \frac{\sin r}{r}\right)^{(N-n-1)}  dVol_{\text{\tiny $\nu^{<\pi}M$} }(\vec{v}).  $
\end{center}

\end{definition}  

We integrate over $\nu^{<\pi}M$, the bundle of normal vectors of length less than $\pi$, because $\pi$ is the diameter of $S^{N}$.  $A_{\vec{v}}$ denotes the second fundamental form of the normal vector $\vec{v}$, $r$ denotes $||\vec{v}||$, and $Id_{T_{p}M}: T_{p}M \rightarrow T_{p}M$ is the identity transformation of the tangent space to $M$ at its basepoint $p$.  As in Definition \ref{cptc}, the normalization by $Vol(S^{N})$ in Definition \ref{atcsphere} ensures that $\mathcal{T}_{S^{N}}(M) = \mathcal{T}_{S^{N+N'}}(M)$ if $S^{N}$ is embedded as a totally geodesic submanifold of $S^{N+N'}$. \\   

For submanifolds of spheres, the differential of the normal exponential map  can be expressed entirely in terms of the second fundamental form: let $\vec{u}$ be a unit normal vector to a manifold $M$ immersed in $S^{N}$ and let $A_{\vec{u}}$ be its second fundamental form.  Let $e_{1}, ... , e_{n}$ be a set of principal vectors for $A_{\vec{u}}$ with principal curvatures $\kappa_{1}, ... , \kappa_{n}$, and let $u_{2}, ... , u_{N-n}$ be an orthonormal basis for the subspace of $\nu_{p}M$ orthogonal to $\vec{u}$.  Then the differential of the normal exponential map at a normal vector $\vec{v} = r\vec{u}$ can be represented in this basis using the Jacobi fields of the sphere, as follows: \\

\begin{itemize}

\item $dExp^{\perp}_{\vec{v}}(e_{i}) = \left( \cos(r) - \kappa_{i} \sin(r) \right) E_{i}(r)$, where $E_{i}$ is the parallel vector field with initial value $e_{i}$ along the geodesic $\gamma_{\vec{u}}$ of $S^{N}$ through $\vec{u}$, corresponding to the Jacobi field $J_{i}(t)$ along $\gamma_{\vec{u}}$ with $J_{i}(0) = e_{i}$ and $J_{i}'(0) = -\kappa_{i} e_{i}$.  

\bigskip

\item $dExp^{\perp}_{\vec{v}}(u_{j}) = \left(\frac{\sin r}{r}\right) F_{j}$, where $F_{j}$ is the parallel vector field along $\gamma_{\vec{u}}$ with initial value $u_{j}$, corresponding to the Jacobi field $J_{j}(t)$ along $\gamma_{\vec{u}}$ with $J_{j}(0) = 0$ and $J_{j}'(0) = \frac{1}{r}u_{j}$.

\bigskip

\item $dExp^{\perp}_{\vec{v}}(\vec{u}) = \gamma_{\vec{u}}'(r)$, corresponding to the Jacobi field $\gamma_{\vec{u}}'(r)$.  

\end{itemize}

\bigskip

This gives us: 

\begin{center}
$det(dExp^{\perp})_{\vec{v}} =  \prod\limits_{i=1}^{n} \left( \cos(r) - \kappa_{i} \sin(r) \right) \left(\frac{\sin r}{r}\right)^{(N-n-1)}$

\begin{equation}
\label{detdexpperp}
= det \left( \cos(r) Id_{T_{p}M} - \sin(r) A_{\vec{u}} \right) \left( \frac{\sin r}{r}\right)^{(N-n-1)}.
\end{equation}
\end{center} 

\medskip

Integrating $\vert det(dExp^{\perp}) \vert$ over a measurable subset of $\nu^{<\pi}M$ defines a positive measure on $\nu^{<\pi}M$ which is, in a natural sense, the pull-back of the measure on the ambient space $S^{N}$ via $Exp^{\perp}$.  Equation (\ref{detdexpperp}) implies that, up to the normalization by $Vol(S^{N})$, $\mathcal{T}(M)$ is the total mass of $\nu^{<\pi}M$ with this measure.  We record this result in the following proposition: 

\begin{proposition}
\label{spheretcmeaning}  Let $M$ be a closed manifold immersed in the sphere $S^{N}$.  Then: \\ 

$\mathcal{T}(M) =  \frac{1}{Vol(S^{N})} \displaystyle \text{\LARGE $\int\limits_{\text{\footnotesize $\nu^{<\pi}M$}} \vert$} det(dExp^{\perp}_{\vec{v}}) \text{\LARGE $\vert$} dVol_{\text{\tiny $\nu^{<\pi}M$}}(\vec{v}). $

\end{proposition}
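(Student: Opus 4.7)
The proof is essentially an assembly of the ingredients that have already been put in place in the discussion preceding the statement. The plan is as follows.

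First, I would unpack the integrand in Definition \ref{atcsphere} and compare it term-by-term with the Jacobian formula (\ref{detdexpperp}). The key observation, which the exposition above has implicitly used, is that the second fundamental form depends linearly on the normal vector: for $\vec v = r\vec u$ with $\vec u$ of unit length, $A_{\vec v} = r A_{\vec u}$. Consequently,
\[
\cos(r)\,Id_{T_pM} - \left(\frac{\sin r}{r}\right) A_{\vec v} \;=\; \cos(r)\,Id_{T_pM} - \sin(r)\,A_{\vec u},
\]
so the determinant appearing in the definition of $\mathcal{T}(M)$ agrees (up to sign) with the factor $\prod_i(\cos(r)-\kappa_i \sin(r))$ appearing in (\ref{detdexpperp}).

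Next, I would invoke (\ref{detdexpperp}) itself, which has already been derived above from the explicit form of Jacobi fields on $S^N$ along the geodesic $\gamma_{\vec u}$: tangential principal directions $e_i$ contribute factors $\cos(r)-\kappa_i\sin(r)$; the $(N-n-1)$ orthogonal normal directions $u_j$ each contribute a factor $(\sin r)/r$; and the radial direction $\vec u$ contributes $1$ since $dExp^\perp_{\vec v}(\vec u)=\gamma_{\vec u}'(r)$ is a unit vector. Combining the two displays above yields
\[
\bigl|\det(dExp^\perp)_{\vec v}\bigr| \;=\; \Bigl|\det\bigl(\cos(r)\,Id_{T_pM} - (\tfrac{\sin r}{r})A_{\vec v}\bigr)\Bigr|\,\Bigl(\tfrac{\sin r}{r}\Bigr)^{\!N-n-1}
\]
pointwise on $\nu^{<\pi}M$, where on the right-hand side one must note that $(\sin r)/r > 0$ for $r\in(0,\pi)$, so this factor may be pulled out of the absolute value without change.

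Finally, I would integrate this pointwise identity over $\nu^{<\pi}M$ and divide by $\mathrm{Vol}(S^N)$. The right-hand side becomes the definition of $\mathcal{T}(M)$, and the left-hand side is the integral appearing in the statement of Proposition \ref{spheretcmeaning}. The only potential subtlety — and the one step I would be most careful to spell out — is the choice of orthonormal frame on $\nu^{<\pi}M$ used to compute $dVol_{\nu^{<\pi}M}$, so that the Jacobian of $Exp^\perp$ with respect to it matches $dVol_{S^N}$ correctly; but this is precisely the frame $\{e_i, u_j, \vec u\}$ (thought of as horizontal/vertical lifts) used to derive (\ref{detdexpperp}), so no additional correction factor appears. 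This completes the proof.
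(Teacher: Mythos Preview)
Your proposal is correct and follows exactly the approach the paper takes: the paper derives equation~(\ref{detdexpperp}) from the explicit Jacobi fields of $S^N$ and then simply observes that, after noting $(\sin r)/r>0$ on $(0,\pi)$, this matches the integrand in Definition~\ref{atcsphere}. If anything, you have spelled out more carefully than the paper does the identity $A_{\vec v}=rA_{\vec u}$ and the frame considerations, but the argument is the same.
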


\medskip 

In Chern and Lashof's original theorems, the equivalent statement is that for a submanifold $M$ of $\R^{N}$, up to normalization by $Vol(S^{N-1})$, $\mathcal{T}(M)$ is the total mass of the unit normal bundle $\nu^{1}M$, with the positive measure pulled back from the unit sphere $S^{N-1}$ by the Gauss map. \\  

It will be helpful to note that the total absolute curvature of a spherical submanifold can also be written as follows: 

\begin{equation}
\label{atcsphereformula}
\mathcal{T}(M) = \text{\scriptsize $\frac{1}{Vol(S^{N})}$} \displaystyle \text{\LARGE $\int\limits_{\text{\footnotesize $\nu^{1}M$}} \int\limits_{\text{\footnotesize $0$}}^{\text{\footnotesize $\pi$}} \vert$} \sum_{i=0}^{n} (-1)^{i}\sin^{(N-n-1+i)}(r) \cos^{(n-i)}(r) \sigma_{i}( \small{\kappa} ) \text{\LARGE $\vert$} \text{\em dr $dVol_{\text{\tiny $\nu^{1}M$}}(\vec{u})$}.  
\end{equation}
\medskip

Here, $\sigma_{i}( \kappa )$ represents the $i^{th}$ elementary symmetric function of the principal curvatures of the normal vector $\vec{u}$. In particular, $\sigma_{1}( \kappa )$ is the mean curvature $\kappa_{1} + \kappa_{2} + ... + \kappa_{n}$, $\sigma_{2}( \kappa ) = \kappa_{1}\kappa_{2} + \kappa_{1}\kappa_{3} + ... + \kappa_{n-1}\kappa_{n}$ and $\sigma_{n}( \kappa ) = \kappa_{1}\kappa_{2}\dots\kappa_{n}$ is the Gauss curvature in the direction $\vec{u}$, etc. \\   

The spherical formulations of the Chern-Lashof theorems are as follows.  We include the proofs below for completeness.  These results can also be found in the work of Koike: 

\begin{theorem}[Spherical Formulation of the Chern-Lashof Theorems - see \cite{KoII}]
\label{spherecl}

Let $M$ be an $n$-dimensional closed manifold isometrically immersed in the sphere $S^{N}$: 

\begin{itemize}

  \item[\textbf{A.}]  Let $\beta_{i}$ be the $i^{th}$ Betti number of $M$ with coefficients in the integers or any field.  Then $\sum\limits_{i=0}^{n} \beta_{i} \leq \mathcal{T}(M)$.  In particular, $\mathcal{T}(M) \geq 2$.  

  \bigskip  

  \item[\textbf{B.}]  If $\mathcal{T}(M) < 3$, then $M$ is homeomorphic to $S^{n}$.  

  \bigskip  

  \item[\textbf{C.}] $\mathcal{T}(M) = 2$ precisely if $M$ is the boundary of a geodesic ball in an $(n+1)$-dimensional totally geodesic subsphere of $S^{N}$.  

\end{itemize}

\end{theorem}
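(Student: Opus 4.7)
The plan is to adapt Chern and Lashof's Morse-theoretic argument to the spherical setting. Viewing $M \subset S^N \subset \R^{N+1}$, for each $q \in S^N$ I define the spherical height function $f_q: M \to \R$ by $f_q(p) = \langle p, q \rangle = \cos d(p,q)$. A direct computation shows that $p \in M$ is a critical point of $f_q$ if and only if $q$ lies in $\operatorname{Span}(p) \oplus \nu_p M$, and for $q$ not antipodal to any point of $M$ this happens precisely when $q = Exp^{\perp}(\vec{v})$ for some $\vec{v} \in \nu_p M$ with $|\vec{v}| < \pi$. By Sard's theorem applied to $Exp^{\perp}: \nu^{<\pi}M \to S^N$, for almost every $q \in S^N$ the function $f_q$ is Morse, and its critical set is in bijection with $(Exp^{\perp})^{-1}(q)$.

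Combined with Proposition \ref{spheretcmeaning} and the change-of-variables formula, this yields
\[
\mathcal{T}(M) \;=\; \frac{1}{\operatorname{Vol}(S^N)} \int_{S^N} \#(Exp^{\perp})^{-1}(q)\, dVol_{S^N}(q),
\]
so $\mathcal{T}(M)$ equals the average number of critical points of $f_q$ as $q$ ranges over $S^N$. Parts A and B then follow the classical pattern. For A, the Morse inequalities give $\#\operatorname{crit}(f_q) \geq \sum_i \beta_i(M)$ for every Morse $f_q$, and averaging over $q$ delivers $\mathcal{T}(M) \geq \sum_i \beta_i(M)$; the bound $\mathcal{T}(M) \geq 2$ then follows from $\beta_0 + \beta_n \geq 2$ in $\mathbb{Z}/2$-coefficients. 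For B, $\mathcal{T}(M) < 3$ forces some Morse $f_q$ to have fewer than three critical points; compactness gives at least two, hence exactly two, and Reeb's theorem shows $M$ is homeomorphic to $S^n$.

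The main obstacle is Part C. The equality $\mathcal{T}(M) = 2$ forces $f_q$ to have exactly two critical points (a maximum and a minimum) for almost every $q \in S^N$. I would follow Chern and Lashof's rigidity argument, adapted as follows. First, this two-critical-point property implies the spherical two-piece property: every totally geodesic great hypersphere of $S^N$ separates $M$ into at most two components. Second, analyzing supporting great hyperspheres at points of $M$, I would show that the affine span of $M$ in $\R^{N+1}$ has dimension exactly $n+2$, so $M$ is contained in a totally geodesic subsphere $S^{n+1} \subset S^N$. Finally, inside $S^{n+1}$ one upgrades the two-piece property to genuine convexity of one of the components of $S^{n+1} \setminus M$, identifying $M$ as the boundary of a geodesic ball. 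The delicate point is this last step: in Euclidean space one uses that a compact hypersurface with the two-piece property bounds a convex body, but on $S^{n+1}$ geodesics are closed, so one must either pass to an open hemisphere containing $M$ and apply the Euclidean argument there, or else pass to the cone over $M$ in $\R^{N+2}$ and invoke the original Chern-Lashof theorem.
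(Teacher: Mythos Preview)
Your argument for Parts A and B is correct and essentially matches the paper's: the paper uses the distance functions $d_q$ rather than your $f_q(p)=\langle p,q\rangle$, but since $f_q=\cos d_q$ these have the same critical points and the same Morse/degeneracy loci, so the two versions are equivalent. (If anything, your $f_q$ is slightly cleaner because it is globally smooth.)

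For Part C the paper takes a much shorter route than the one you outline. It first proves the identity $\mathcal{T}_{S^{N}}(M)=\mathcal{T}_{\R^{N+1}}(M)$ (Proposition~\ref{cltcequalsspheretc}), obtained by comparing second fundamental forms of $M$ in $S^N$ and in $\R^{N+1}$; geometrically, the Gauss map of $M\subset\R^{N+1}$ and the normal exponential map of $M\subset S^N$ coincide under the obvious identification of normal bundles. Once this is in hand, $\mathcal{T}_{S^N}(M)=2$ gives $\mathcal{T}_{\R^{N+1}}(M)=2$, and the Euclidean Chern--Lashof Theorem~C says $M$ bounds a convex body in an affine $(n+1)$-plane $\mathcal{A}\subset\R^{N+1}$; then $M=\mathcal{A}\cap S^N$, which is exactly a geodesic sphere in the totally geodesic $S^{n+1}=\mathcal{V}\cap S^N$ where $\mathcal{V}$ is the linear span of $\mathcal{A}$. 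Your own Morse functions $f_q$ are \emph{already} the Euclidean height functions restricted to $M$, so you are implicitly one step away from this reduction; the missing piece is the explicit equality of the two total-curvature invariants.

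By contrast, your proposed intrinsic-to-the-sphere argument has a real obstruction at the step you flag as delicate. The ``pass to an open hemisphere'' move fails precisely for the equatorial case: a great $n$-sphere in $S^{n+1}$ has $\mathcal{T}=2$ and is the boundary of a geodesic ball of radius $\pi/2$, but is not contained in any open hemisphere, so you cannot assume this a priori while trying to prove the characterization. Your alternative, ``pass to the cone over $M$ and invoke the original Chern--Lashof theorem,'' does not work as stated because the cone is noncompact; what does work is exactly the paper's move: apply Euclidean Chern--Lashof to $M$ itself (not its cone) sitting in $\R^{N+1}$, after establishing $\mathcal{T}_{S^{N}}(M)=\mathcal{T}_{\R^{N+1}}(M)$.
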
 

\medskip

The spherical Chern-Lashof theorems are based on the observation that if $M$ is a closed manifold isometrically immersed in a round sphere $S^{N}$, then for almost all $q \in S^{N}$, the distance function from $q$ is a Morse function when restricted to $M$.  The equivalent fact in the classical Chern-Lashof theorems is that for almost all unit vectors $\vec{w}$ in $\R^{N}$, the height function $h_{\vec{w}}$ is Morse on $M$.  As with submanifolds of Euclidean spaces, the total absolute curvature of a spherical submanifold is the average number of critical points of a family of Morse functions on $M$ - in this case, of the distance functions $d_{q}$.  This average gives an upper bound for the sum of the Betti numbers of $M$, and if it is less than $3$, it implies $M$ is homeomorphic to a sphere by Reeb's theorem.  \\   

We establish that almost all distance functions are Morse functions on $M$, and we derive a formula for their Hessians, in the next result:  

\begin{proposition}  
\label{spherefullmeasure}
Let $M$ be a closed manifold immersed in the sphere $S^{N}$ as in Theorem \ref{spherecl}.  For almost all $q \in S^{N}$, the distance function from $q$, when restricted to $M$, is a smooth Morse function.  \\

\end{proposition}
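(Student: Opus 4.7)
The strategy is the direct spherical analogue of the proof that almost all height functions are Morse in the Euclidean case: identify the critical points of $d_{q}|_{M}$ with preimages of $q$ under $Exp^{\perp}$, relate the Hessian at such a critical point to the differential of $Exp^{\perp}$, and invoke Sard's theorem.

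First, $d_{q}:S^{N}\to\R$ fails to be smooth only at $q$ and its antipode $-q$. Since $M$ has positive codimension, its antipodal image $\{-p:p\in M\}$ also has measure zero, so for almost all $q\in S^{N}$ the set $\{q,-q\}$ is disjoint from $M$ and $d_{q}|_{M}$ is smooth, taking values in $[0,\pi)$. Fix such a $q$. For $p\in M$ write $r=d_{q}(p)\in(0,\pi)$ and let $\vec{u}\in T_{p}S^{N}$ be the initial unit tangent at $p$ of the minimizing geodesic from $p$ to $q$. Then $\nabla d_{q}(p)=-\vec{u}$, so $p$ is a critical point of $d_{q}|_{M}$ if and only if $\vec{u}\in\nu_{p}M$, equivalently $q=Exp^{\perp}(r\vec{u})$. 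Thus the critical points of $d_{q}|_{M}$ correspond bijectively to the fiber $(Exp^{\perp})^{-1}(q)\subset\nu^{<\pi}M$.

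The Hessian at such a critical point is computed via the standard restriction formula
\begin{equation*}
\operatorname{Hess}(d_{q}|_{M})_{p}(X,Y)=\operatorname{Hess}_{S^{N}}(d_{q})_{p}(X,Y)+\langle\nabla d_{q}(p),II_{M}(X,Y)\rangle,
\end{equation*}
together with the spherical Hessian comparison $\operatorname{Hess}_{S^{N}}(d_{q})(X,Y)=\cot(r)\langle X,Y\rangle$ for $X,Y$ orthogonal to $\nabla d_{q}$. At a critical point this yields, as an endomorphism of $T_{p}M$,
\begin{equation*}
\operatorname{Hess}(d_{q}|_{M})_{p}=\cot(r)\,Id_{T_{p}M}-A_{\vec{u}}.
\end{equation*}
Since $\sin(r)\neq 0$ on $(0,\pi)$, this operator is non-singular precisely when $\cos(r)\,Id_{T_{p}M}-\sin(r)A_{\vec{u}}$ is non-singular, which by the formula (\ref{detdexpperp}) is exactly the condition that $dExp^{\perp}$ be non-singular at $r\vec{u}$.

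It remains to apply Sard's theorem to $Exp^{\perp}:\nu^{<\pi}M\to S^{N}$, a smooth map between manifolds of equal dimension $N$: its set of critical values has measure zero in $S^{N}$. Intersecting the resulting full-measure set of regular values with the full-measure set of $q$ satisfying $\{q,-q\}\cap M=\emptyset$ produces a full-measure set of $q$ for which $d_{q}|_{M}$ is smooth with only non-degenerate critical points, i.e.\ is Morse. The only step beyond bookkeeping is the Hessian computation, a routine constant-curvature Jacobi-field calculation; the crucial observation is simply that it produces the very operator appearing in (\ref{detdexpperp}), so Hessian degeneracy and singularity of $dExp^{\perp}$ coincide and Sard's theorem applies cleanly.
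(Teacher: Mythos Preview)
Your proof is correct and follows essentially the same route as the paper: identify critical points of $d_{q}|_{M}$ with preimages of $q$ under $Exp^{\perp}$, compute the Hessian at a critical point to be $\cot(r)\,Id_{T_{p}M}-A_{\vec{u}}$, observe that its degeneracy coincides with singularity of $dExp^{\perp}$ via (\ref{detdexpperp}), and apply Sard's theorem. The only cosmetic difference is that you invoke the standard restriction formula for the Hessian together with the spherical comparison $\operatorname{Hess}_{S^{N}}(d_{q})=\cot(r)\langle\cdot,\cdot\rangle$, whereas the paper carries out the same computation by hand via principal directions; the content is identical.
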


\begin{proof}[Proof of Proposition ~\ref{spherefullmeasure}]   

For the proof of this proposition and Lemma \ref{hesslemma} below, we let $\widetilde{d_{q}}$ denote $dist(q, \cdot)$ as a function on $S^{N}$, and we let $d_{q}$ denote its restriction to $M$.  For $q$ not in the cut locus of any $p \in M$ (i.e. for $q \not\in \pm M$), $d_{q}$ is a smooth function on $M$.  That it is almost always a Morse function comes from the following:  

\begin{lemma}
\label{hesslemma}
A point $p$ of $M$ is critical for $d_{q}$ iff $q = Exp^{\perp}(\vec{v})$ for some $\vec{v} \in \nu_{p}^{<\pi}M$.  \\

In that case, letting $r = ||\vec{v}||$ and $\vec{u} = \frac{\vec{v}}{r}$, $Hess(d_{q})$ is diagonolized at $p$ by a set of principal directions for the second fundamental form $A_{\vec{u}}$, and the eigenvalue of $Hess(d_{q})$ corresponding to the principal curvature $\kappa$ is $\cot(r) - \kappa$.  

\end{lemma}

\begin{proof}[Proof of Lemma ~\ref{hesslemma}]

The gradient of $d_{q}$ on $M$ is the orthogonal projection of the gradient of $\widetilde{d_{q}}$ on $S^{N}$.  The gradient of $\widetilde{d_{q}}$ at a point $\bar{q}$ in $S^{N} \setminus \lbrace q, -q \rbrace$ is tangent to the minimizing geodesics from $q$ to $\bar{q}$.  $grad(d_{q})$ is zero precisely where the minimizing geodesic from $q$ is normal to $M$, so that $q$ is in the image of the normal exponential map from $p$: \\

\begin{center}
$q = Exp^{\perp} \text{\Large $($} -\widetilde{d_{q}}(p)grad(\tilde{d_{q}}) \text{\Large $)$}.$
\end{center}

\bigskip

Similarly, if $Exp^{\perp}(\vec{v}) = q$, then the minimizing geodesic from $q$ to $p$ is normal to $M$ at $p$, and $p$ is critical for $d_{q}$.  \\

If $p \in M$ and $\vec{u} \in \nu_{p}^{1}M$ are as above, let $e_{1}, ... , e_{n}$ be a set of principal vectors for $A_{\vec{u}}$.  For $0 < r < \pi$, let $q = Exp^{\perp}(r\vec{u})$.  We let $grad(\widetilde{d_{q}})$ and $grad(d_{q})$ denote the vector fields on $S^{N}$ and $M$ respectively, as above.  Because $grad(d_{q}) = grad(\widetilde{d_{q}})^{\top}$, along $M$ we have: \\

\begin{center}
$grad(\widetilde{d_{q}}) = grad(d_{q}) + grad(\widetilde{d_{q}})^{\perp}.$   
\end{center}

\bigskip  

A geodesic sphere of radius $r$ in $S^{N}$ has principal curvature $\cot(r)$ in any tangent direction, relative to the outward unit normal, so $\nabla^{S^{N}}_{e}grad(\widetilde{d_{q}}) = \cot (r) e$ for any $e \in T_{p}M$, which will also be tangent to the geodesic sphere of radius $r$ about $q = Exp^{\perp}(r\vec{u})$. \\  

Letting $F$ be $-\frac{grad(\widetilde{d_{q}})^{\perp}}{||grad(\widetilde{d_{q}})^{\perp}||}$ , $F$ is a unit normal vector field to $M$ in a neighborhood of $p$, which coincides with $\vec{u}$ at $p$, so for each principal direction $e_{i}$ as above, \\ 

\begin{center}
$(\nabla_{e_{i}}^{S^{N}}F)^{\top} = -\kappa_{i}e_{i}.$
\end{center}

\bigskip  

We also have the following: \\ 

\begin{center}
$\nabla_{e_{i}}^{S^{N}}F = \text{\LARGE $($} \frac{e_{i}(||grad(\widetilde{d_{q}})^{\perp}||)}{||grad(\widetilde{d_{q}})^{\perp}||^{2}} \text{\LARGE $)$} grad(\widetilde{d_{q}})^{\perp} - \text{\Large $($} \frac{1}{||grad(\widetilde{d_{q}})^{\perp}||} \text{\Large $)$} \nabla_{e_{i}}^{S^{N}}grad(\widetilde{d_{q}})^{\perp}.$ 
\end{center}

\bigskip  

The tangential part of $\nabla_{e_{i}}^{S^{N}}F$ is therefore the tangential part of the expression above: \\ 

\begin{center}

$(\nabla_{e_{i}}^{S^{N}}F)^{\top} = \text{\LARGE $($} -\frac{1}{||grad(\widetilde{d_{q}})^{\perp}||}\nabla_{e_{i}}^{S^{N}}(grad(\widetilde{d_{q}})^{\perp})\text{\LARGE $)$}^{\top} = \text{\LARGE $($} -\frac{1}{||grad(\widetilde{d_{q}})^{\perp}||}\nabla_{e_{i}}^{S^{N}} \text{\Large $($}  grad(\widetilde{d_{q}}) - grad(d_{q}) \text{\Large $)$}  \text{\LARGE $)$}^{\top}$ 

\bigskip  

$ = (\frac{1}{||grad(\widetilde{d_{q}})^{\perp}||}) \left( \nabla_{e_{i}}^{M}grad(d_{q}) - \cot (r) e_{i} \right) = (\frac{1}{||grad(\widetilde{d_{q}})^{\perp}||}) \left( Hess(d_{q})(e_{i}) - \cot (r) e_{i} \right).  $ 

\end{center}
\bigskip  

Noting that $||grad(\widetilde{d_{q}})^{\perp}|| = 1$ at critical points of $d_{q}$, we then have $-\kappa_{i}e_{i} = Hess(d_{q})(e_{i}) - \cot(r)e_{i}$, and therefore that $Hess(d_{q})(e_{i}) = (\cot r - \kappa_{i})e_{i}$.  This completes the proof of Lemma \ref{hesslemma}.  

\end{proof}

The lemma implies that $q = Exp^{\perp}(r\vec{u})$ has a degenerate critical point at $p$ if and only if $r = \arccot \kappa$ for $\kappa$ a principal curvature of $M$ in the direction $\vec{u}$.  This is the same as the condition that the normal exponential map have a critical point at $r\vec{u}$, by the expression for $det(dExp^{\perp})$ in (\ref{detdexpperp}).  The $q \in S^{N}$ for which $d_{q}$ is not Morse are therefore the focal points of $M$ in $S^{N}$.  These are the critical values of the normal exponential map, which are of measure zero in $S^{N}$ by Sard's Theorem.  \\ 

This completes the proof of Proposition ~\ref{spherefullmeasure}.  

\end{proof}

\begin{proof}[Proof of Parts A and B of Theorem \ref{spherecl}]

By Proposition ~\ref{spherefullmeasure} and the lemma in its proof, the regular values of $Exp^{\perp} : \nu^{<\pi}M \rightarrow S^{N}$ are precisely the points $q \in S^{N}$ for which $d_{q}$ is a Morse function.  Letting $S^{N}_{reg}$ denote this subset of $S^{N}$, Sard's Theorem implies that $S^{N}_{reg}$ is of full measure in $S^{N}$.  In fact, $S^{N}_{reg}$ also contains an open, dense subset of $S^{N}$. \\  

This can be seen by extending $Exp^{\perp}$ to be defined on the bundle $\widehat{\nu}M$ over $M$, whose fibre at $p$ is the totally geodesic $(N-n)$-dimensional subsphere of $S^{N}$ orthogonal to $M$ at $p$.  We denote this extension $\widehat{Exp}^{\perp}: \widehat{\nu}M \rightarrow S^{N}$.  The critical points of  $\widehat{Exp}^{\perp}: \widehat{\nu}M \rightarrow S^{N}$ are closed in the compact manifold $\widehat{\nu}M$, and thus a compact subset of $\widehat{\nu}M$.  Their image, the critical values of $\widehat{Exp}^{\perp}: \widehat{\nu}M \rightarrow S^{N}$, are a compact, and thus a closed subset of $S^{N}$, whose complement is of full measure by Sard's Theorem.  The critical values of $Exp^{\perp}: \nu^{<\pi}M \rightarrow S^{N}$ are a subset of those of $\widehat{Exp}^{\perp}: \widehat{\nu}M \rightarrow S^{N}$.  The regular values of $Exp^{\perp}: \nu^{<\pi}M \rightarrow S^{N}$ therefore contain the regular values of $\widehat{Exp}^{\perp}: \widehat{\nu}M \rightarrow S^{N}$, which are open and dense in $S^{N}$. \\  

As explained in the discussion before Proposition \ref{spheretcmeaning}, integrating $\vert det(dExp^{\perp}) \vert$ over neighborhoods of $\nu^{<\pi}M$ defines a positive measure on $\nu^{<\pi}M$ which is absolutely continuous with respect to  $dVol_{\nu^{<\pi}M}$ and is, in a natural sense, the pull-back of the measure on $S^{N}$ via $Exp^{\perp}$.  We will denote this measure by $d\mu$.  If $\phi$ is a measurable function on $\nu^{<\pi}M$, the integral of $\phi$ with respect to this measure is given by integrating against $\vert det(dExp^{\perp}) \vert dVol_{\nu^{<\pi}M}$: 

\medskip
\begin{center}
$\displaystyle \text{\LARGE $\int\limits_{\text{\scriptsize $\nu^{<\pi}M$}}$} \phi \text{   d$\mu$} = \text{\LARGE $\int\limits_{\text{\scriptsize $\nu^{<\pi}M$}}$} \phi \vert det(dExp^{\perp}) \vert dVol_{\nu^{<\pi}M}.$
\end{center}
\medskip   

For any regular point $\vec{v}$ of $Exp^{\perp}:\nu^{<\pi}M \rightarrow S^{N}$, $Exp^{\perp}$ is a local diffeomorphism from a neighborhood $V$ of $\vec{v}$ to a neighborhood $Q$ of $Exp^{\perp}(\vec{v})$ in $S^{N}_{reg}$.  We then have:   

\begin{center}
\begin{equation}
\label{spherelocalcov}
\displaystyle \text{\LARGE $\int\limits_{\text{\scriptsize $V$}}$} \phi d\mu = \text{\LARGE $\int\limits_{\text{\scriptsize $V$}}$} \phi \vert det(dExp^{\perp}) \vert dVol_{\nu^{<\pi}M} = \text{\LARGE $\int\limits_{\text{\scriptsize $Q$}}$} \phi \circ (Exp^{\perp})^{-1} dVol_{S^{N}}.
\end{equation}
\end{center}

This implies that the pre-image of $S^{N}_{reg}$ in $\nu^{<\pi}M$ is of full measure relative to $d \mu$:  If $\vec{v}$ is a regular point of $Exp^{\perp}$ whose image in $S^{N}$ is a critical value, then let $V$ and $Q$ be neighborhoods of $\vec{v}$ and $Exp^{\perp}(\vec{v})$ with $Exp^{\perp}: V \rightarrow Q$ a diffeomorphism as in (\ref{spherelocalcov}).  $Q \cap S^{N}_{reg}$ contains a set which is open, dense and of full measure in $Q$, so $(Exp^{\perp})^{-1}(S^{N}_{reg}) \cap V$ likewise contains a set which is open, dense and of full measure in $V$.  This implies that: \\  

\begin{center}  
$\displaystyle \text{\LARGE $\int\limits_{\text{\scriptsize $(Exp^{\perp})^{-1}(S^{N} \setminus S^{N}_{reg}) \cap V$}}$}\vert det(dExp^{\perp}) \vert dVol_{\nu^{<\pi}M} = 0.$
\end{center}

\medskip

We can cover the regular points in the pre-image of $S^{N} \setminus S^{N}_{reg}$ with open sets $V$ as above.  The integral of $|det(dExp^{\perp})|$ over the critical points of $Exp^{\perp}$ is zero, and this implies that $(Exp^{\perp})^{-1}(S^{N}_{reg})$ is of full measure for $d \mu$. \\  

We now apply the standard fact from Morse theory that if $f$ is a Morse function on a closed manifold $M$, then $\beta_{i}(M;F)$, the $i^{th}$ Betti number of $M$ with coefficients in the field $F$, is bounded above by $C_{i}(f)$, the number of critical points of $f$ which have index $i$.  For $q$ in $S^{N}_{reg}$, the distance function $d_{q}$ is Morse on $M$, and its critical points are in $1-1$ correspondence with the pre-images of $q$ via $Exp^{\perp}$, as explained in Proposition \ref{spherefullmeasure} and Lemma \ref{hesslemma}.  Letting $C_{i}(d_{q})$ denote the number of critical points of $d_{q}$ of index $i$, and letting $\sharp(Exp^{\perp})^{-1}(q)$ denote the pre-image count via $Exp^{\perp}$ for a point $q$ of $S^{N}$, we therefore have:  

\bigskip

$\mathcal{T}(M) = \displaystyle \text{\scriptsize $\frac{1}{Vol(S^{N})}$} \text{\LARGE $\int\limits_{\text{\footnotesize $\nu^{<\pi}M$}} \vert$} det(dExp^{\perp}_{\vec{v}}) \text{\LARGE $\vert$} dVol_{\text{\tiny $\nu^{<\pi}M$}}(\vec{v}) = \text{\scriptsize $\frac{1}{Vol(S^{N})}$} \text{\LARGE $\int\limits_{\text{\footnotesize $(Exp^{\perp})^{-1}(S^{N}_{reg})$}} \vert$} det(dExp^{\perp}_{\vec{v}}) \text{\LARGE $\vert$} dVol_{\text{\tiny $\nu^{<\pi}M$}}(\vec{v})$

\bigskip  

$ = \displaystyle \text{\scriptsize $\frac{1}{Vol(S^{N})}$} \text{\LARGE $\int\limits_{\text{\scriptsize $S^{N}_{reg}$}}$} \sharp(Exp^{\perp})^{-1}(q) dVol_{S^{N}}(q) = \text{\scriptsize $\frac{1}{Vol(S^{N})}$} \text{\LARGE $\int\limits_{\text{\scriptsize $S^{N}_{reg}$}}$} \left( \sum\limits_{i=0}^{n}C_{i}(d_{q}) \right) dVol_{S^{N}}(q) \geq \sum\limits_{i=0}^{n} \beta_{i}(M;F)$

\medskip  

The first equality above follows from Proposition \ref{spheretcmeaning} and the second from the fact that $(Exp^{\perp})^{-1}(S^{N}_{reg})$ is of full measure for $d\mu$.  The third follows from the change-of-variables formula (\ref{spherelocalcov}) (with $\phi = 1$), and the fourth from the correspondence between the pre-images of $q$ via $Exp^{\perp}$ and the critical points of the distance function $d_{q}$ on $M$, as in Lemma \ref{hesslemma}.  We have also used the fact that $S^{N}_{reg}$ is of full measure in $S^{N}$. \\  

This establishes Theorem \ref{spherecl}.A, that $\sum\limits_{i=0}^{n} \beta_{i}(M;F) \leq \mathcal{T}(M)$. \\

If $\mathcal{T}(M) < 3$, there must be a set $\mathcal{S}$ within $S^{N}$ of positive measure, which must therefore intersect $S^{N}_{reg}$, for which the pre-image count $\sharp(Exp^{\perp})^{-1}(q)$ is less than $3$.  For any $q_{0} \in \mathcal{S} \cap S^{N}_{reg}$ the pre-image count must therefore be equal to $2$, corresponding to the critical points of $d_{q_{0}}$ at its global minimum and maximum.  Reeb proved in ~\cite{Re} that if a closed manifold admits a Morse function with its global minimum and maximum as its only critical points, it is homeomorphic to a sphere (although it may not be diffeomorphic to the standard sphere.)  For $q_{0}$ as above, $d_{q_{0}}$ provides such a Morse function, and this completes the proof of Theorem \ref{spherecl}.B.

\end{proof}  

For odd-dimensional submanifolds of spheres, there is the following stronger version of Theorem \ref{spherecl}.B.  This will be important in proving Theorem \ref{cpcl}.B:     

\begin{theorem}
\label{strongerspherecl2}

Let $M^{2m+1}$ be an odd-dimensional compact manifold, isometrically immersed in the sphere $S^{N}$, with $\mathcal{T}(M) < 4$. \\  

Then $M$ is homeomorphic to the sphere $S^{2m+1}$.  

\end{theorem}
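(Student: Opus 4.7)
The plan is to combine the Morse-theoretic framework from the proof of Theorem \ref{spherecl} with a parity argument that uses the odd dimension of $M$.

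First, I would apply Lemma \ref{hesslemma} and the change-of-variables step in the proof of Theorem \ref{spherecl}.A to rewrite the total absolute curvature as
\[
\mathcal{T}(M) \;=\; \frac{1}{Vol(S^N)} \int_{S^N_{reg}} \sharp(Exp^{\perp})^{-1}(q)\, dVol_{S^N}(q),
\]
so that $\mathcal{T}(M)$ is the average of the pre-image count of $Exp^{\perp}:\nu^{<\pi}M \to S^N$ over the full-measure set $S^N_{reg}$ of regular values, and for each $q \in S^N_{reg}$ this pre-image count equals the total number of critical points of the Morse function $d_q$ on $M$.

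Next, I would exploit the fact that $\chi(M)=0$ since $\dim M = 2m+1$ is odd. For any Morse function $f$ on $M$, the signed sum $\sum_j (-1)^{i_j}$ over the indices of its critical points equals $\chi(M) = 0$. A sum of three terms, each $\pm 1$, is odd and hence cannot vanish, so no Morse function on $M$ has exactly three critical points. Combined with the trivial lower bound of two critical points (a global minimum and maximum on each component), this forces
\[
\sharp(Exp^{\perp})^{-1}(q) \in \{2\} \cup \{4,5,6,\dots\}
\]
for every $q \in S^N_{reg}$.

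If $\sharp(Exp^{\perp})^{-1}(q) \geq 4$ held on a full-measure subset of $S^N_{reg}$, the displayed integral formula would give $\mathcal{T}(M) \geq 4$, contradicting the hypothesis. Hence there is a positive-measure set of points $q_0 \in S^N_{reg}$ for which $\sharp(Exp^{\perp})^{-1}(q_0) = 2$. For any such $q_0$, the distance function $d_{q_0}$ is a Morse function on $M$ with exactly two critical points, and Reeb's theorem from \cite{Re}, applied as in the proof of Theorem \ref{spherecl}.B, concludes that $M$ is homeomorphic to $S^{2m+1}$.

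The essential new ingredient compared to Theorem \ref{spherecl}.B is the parity argument, which promotes the weak conclusion $\sharp(Exp^{\perp})^{-1}(q) \leq 3$ — all that $\mathcal{T}(M) < 4$ directly yields on a positive-measure set — to the much stronger conclusion that the count equals $2$ there. This is the only place the odd-dimension hypothesis enters, and it is the crux of the argument: for even-dimensional $M$ no such parity obstruction exists, and the bound $\mathcal{T}(M) < 4$ alone would not be enough to force a distance function with only two critical points.
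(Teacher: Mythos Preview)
Your proposal is correct and follows essentially the same route as the paper: both use the integral representation of $\mathcal{T}(M)$ as an average critical-point count to find a Morse distance function with fewer than four critical points, invoke $\chi(M)=0$ to rule out exactly three, and conclude via Reeb's theorem. The only difference is cosmetic---the paper phrases the Euler-characteristic step as ``a third critical point of index $k$ would force a fourth of index $2m+1-k$,'' while you phrase it as a parity obstruction on a sum of three signs---but the underlying argument is identical.
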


\begin{proof}

By the proofs of Theorem \ref{spherecl}.A, Proposition \ref{spherefullmeasure} and Lemma \ref{hesslemma}, if $\mathcal{T}(M) < 4$, there is a point $q_{0}$ in $S^{2N+1}$ such that the distance function $d_{q_{0}}$ is a Morse function on $M$ with fewer than $4$ critical points.  $d_{q_{0}}$ must have a critical point of index $0$ at a point $p_{1}$ of $M$ where it realizes its global minimum, and a critical point of index $2m+1$ at another point $p_{2}$ where it attains its global maximum.  If $d_{q_{0}}$ had a third critical point $p_{3}$ of index $k$, it would have a fourth critical point $p_{4}$ of index $2m+1-k$ because $\chi(M) = 0$.  This is impossible, so $p_{1}$ and $p_{2}$ are in fact the only critical points of $d_{q_{0}}$ on $M$, and $M$ is homeomorphic to $S^{2m+1}$ by Reeb's theorem, \cite{Re}. 

\end{proof}

A similar statement holds for odd-dimensional manifolds immersed in Euclidean spaces. \\  

We will prove Part C of Theorem \ref{spherecl} as a consequence of the following proposition.  Parts A and B of Theorem \ref{spherecl} also follow from this proposition, however many of the observations in the proofs of Theorems \ref{spherecl}.A and \ref{spherecl}.B given above will be important in explaining and proving our results for complex projective manifolds.   

\begin{proposition}
\label{cltcequalsspheretc}

For $M$ isometrically immersed in $S^{N}$ and $S^{N}$ embedded as the unit sphere in $\mathbb{R}^{N+1}$, \\ 

\begin{center} 
$\mathcal{T}_{S^{N}}(M) = \mathcal{T}_{\R^{N+1}}(M)$.  
\end{center} 

\medskip 

\end{proposition}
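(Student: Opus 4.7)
The plan is to parametrize the unit normal bundle of $M \subset \mathbb{R}^{N+1}$ fibrewise by polar coordinates built from the radial direction $\vec{p}$ and the unit normal bundle of $M$ in $S^N$, and then to verify that under this parametrization the Chern-Lashof integrand of Theorem \ref{cl} coincides pointwise with the integrand of Definition \ref{atcsphere} after the substitution $\vec{v} = r\vec{u}$.

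First I would note that at each $p \in M$ the position vector $\vec{p} \in S^N$ is a unit normal to $M$ in $\mathbb{R}^{N+1}$ and $\nu_p M = \mathbb{R}\vec{p} \oplus \nu_p^{S^N}M$, so the unit normal $(N-n)$-sphere at $p$ admits the parametrization
\[ \vec{w}(\theta,\vec{u}) = \cos(\theta)\,\vec{p} + \sin(\theta)\,\vec{u}, \qquad \theta \in [0,\pi],\; \vec{u} \in \nu_p^{1,S^N}M, \]
which gives $dVol_{\nu^1 M}(\vec{w}) = \sin^{N-n-1}(\theta)\,d\theta\,dVol_{\nu^{1,S^N}M}(\vec{u})\,dVol_M(p)$ off a measure-zero set. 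Since $\nabla^{\mathbb{R}^{N+1}}_X \vec{p} = X$, the shape operator of $M \subset \mathbb{R}^{N+1}$ in the direction $\vec{p}$ is $A_{\vec{p}} = -Id_{T_pM}$; and for $\vec{u} \in T_pS^N$ normal to $M$ in $S^N$, the shape operators $A_{\vec{u}}^{\mathbb{R}^{N+1}}$ and $A_{\vec{u}}^{S^N}$ agree, since both equal $-(\nabla^{\mathbb{R}^{N+1}}_X \vec{u})^{T_pM}$. Linearity of $A$ in the normal direction then yields $A_{\vec{w}} = -\cos(\theta)\,Id_{T_pM} + \sin(\theta)\,A_{\vec{u}}$ and therefore
\[ \big|\det(A_{\vec{w}})\big| = \big|\det\!\big(\cos(\theta)\,Id_{T_pM} - \sin(\theta)\,A_{\vec{u}}\big)\big|. \]

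On the spherical side, writing $\vec{v} = r\vec{u}$ with $r = \|\vec{v}\| \in [0,\pi)$ and $\vec{u}$ a unit normal to $M$ in $S^N$, one has $A_{\vec{v}} = rA_{\vec{u}}$ and $dVol_{\nu^{<\pi}M}(\vec{v}) = r^{N-n-1}\,dr\,dVol_{\nu^{1,S^N}M}(\vec{u})\,dVol_M(p)$. The factor $(\sin r/r)^{N-n-1}$ absorbs the $r^{N-n-1}$ from the volume form to produce $\sin^{N-n-1}(r)$, while the determinant factor reduces to $|\det(\cos(r)\,Id_{T_pM} - \sin(r)\,A_{\vec{u}})|$; after renaming $r \leftrightarrow \theta$ this matches the Chern-Lashof integrand from the previous paragraph term by term. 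Both definitions carry the common normalization $1/Vol(S^N)$, so the equality $\mathcal{T}_{\mathbb{R}^{N+1}}(M) = \mathcal{T}_{S^N}(M)$ follows.

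The only delicate points are the polar Jacobian on the fibre $S^{N-n}$ and the identification of the two shape operators in the $S^N$ direction, both of which are routine, so I do not anticipate a real obstacle. As a conceptual cross-check, the identity can also be derived from critical-point counting: for $q \in S^N \subset \mathbb{R}^{N+1}$ the height function $h_q(p)=\langle p,q\rangle$ and the spherical distance $d_q$ on $M$ differ by composition with the strictly decreasing $\arccos$, so they share the same critical sets at regular values, and the proofs of Theorem \ref{cl} and Theorem \ref{spherecl}.A show that both total curvatures compute the $S^N$-average of the critical-point counts of these Morse functions.
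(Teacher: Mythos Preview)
Your proof is correct and follows essentially the same route as the paper: both parametrize the unit normal bundle of $M$ in $\R^{N+1}$ as $\cos(\theta)\vec{\nu} + \sin(\theta)\vec{u}$ with $\vec{\nu}$ the outward normal to $S^N$, compute the Euclidean shape operator in this direction as $\sin(\theta)A_{\vec{u}} - \cos(\theta)Id$, and match the resulting integrand against Definition~\ref{atcsphere}. Your closing Morse-theoretic cross-check (that $h_q$ and $d_q$ share critical sets) is precisely the geometric interpretation the paper records immediately after its proof.
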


\begin{proof}

Let $\nu^{1}M$ be the unit normal bundle of $M$ in $S^{N}$, and {\Small $\widetilde{\nu^{1}M}$} its unit normal bundle in $\R^{N+1}$.  Let $\vec{\nu}$ be the outward unit normal vector to $S^{N}$ in $\R^{N+1}$.  Any unit normal vector $\vec{z}$ to $M$ in $\R^{N+1}$ is of the form $\cos(\theta) \vec{\nu} + \sin(\theta) \vec{u}$, for $\vec{u}$ a unit normal vector to $M$ in $S^{N}$ and $\theta \in [0,\pi]$.  Let $\widetilde{A_{\vec{u}, \theta}}$ be the second fundamental form of $M$ in $\R^{N+1}$ for a normal vector $\cos(\theta) \vec{\nu} + \sin(\theta) \vec{u}$ as above, and let $A_{\vec{u}}$ be the second fundamental form of $M$ in $S_{N}$ for the corresponding $\vec{u}$.  For any principal vector $e$ of $A_{\vec{u}}$ with principal curvature $\kappa$, we have the following identity :  

\begin{center} 

\medskip

$\widetilde{A_{\vec{u}, \theta}}(e) = -\text{\LARGE $($} \nabla_{e}^{\R^{N+1}} \text{\Large $($} \cos(\theta) \vec{\nu}+\sin(\theta) \vec{u} \text{\Large $)$} \text{\LARGE $)$}^{\top} = -\text{\LARGE $($}\cos(\theta) \nabla_{e}^{\R^{N+1}}\vec{\nu}+\sin(\theta) \nabla_{e}^{\R^{N+1}}\vec{u} \text{\LARGE $)$}^{\top}$ 

\bigskip  

$= -\cos(\theta) e + \sin(\theta) (-\nabla_{e}^{S^{N}}\vec{u})^{\top} = \left( \kappa\sin(\theta)  - \cos(\theta) \right) e.$

\bigskip  

\end{center} 

$e$ is therefore an eigenvector of $\widetilde{A_{\vec{u}, \theta}}$ with eigenvalue $\kappa\sin(\theta) - \cos(\theta)$.  Then we have: \\

\begin{center} 

$\mathcal{T}_{S^{N}}(M) = \frac{1}{Vol(S^{N})} \displaystyle \text{\LARGE $\int\limits_{\text{\scriptsize $\nu^{1}M$}} \int\limits_{\text{\scriptsize$0$}}^{\text{\scriptsize $\pi$}}$} \text{\LARGE $\vert$} \prod\limits_{i=1}^{n} \left( \kappa_{i}\sin(\theta) - \cos(\theta) \right) \text{\LARGE $\vert$} \sin^{(N-n-1)}(\theta)$ $d\theta$ $dVol_{\nu^{1}M}(\vec{u})$  

\bigskip  

$= \frac{1}{Vol(S^{N})} \displaystyle \text{\LARGE $\int\limits_{\text{\scriptsize $\nu^{1}M$}} \int\limits_{\text{\scriptsize$0$}}^{\text{\scriptsize $\pi$}}$} \text{\LARGE $\vert$} det(\widetilde{A_{\vec{u}, \theta}}) \text{\LARGE $\vert$}  \sin^{(N-n-1)}(\theta)$ $d\theta$ $dVol_{\nu^{1}M}(\vec{u})$

\bigskip  

$= \frac{1}{Vol(S^{N})} \displaystyle \text{\LARGE $\int\limits_{\text{\scriptsize $\widetilde{\nu^{1}M}$}}$} \text{\LARGE $\vert$} det(\widetilde{A_{\vec{z}}}) \text{\LARGE $\vert$} dVol_{\widetilde{\nu^{1}M}}(\vec{z}) =  \mathcal{T}_{\R^{N+1}}(M). $

\end{center}

\end{proof}

This proof has a simple geometric meaning: \\ 

The image of {\small $\nu_{p}^{<\pi}M$} under the normal exponential map, as a subset of $S^{N}$, is the same as the image of {\small $\widetilde{\nu^{1}_{p}M}$} under the Gauss map.  In fact, these maps coincide under the natural identification between $\nu^{<\pi}M$ and {\small $\widetilde{\nu^{1}M}$}.  Up to the normalization by $Vol(S^{N})$, $\mathcal{T}_{S^{N}}(M)$ is the mass of $\nu^{<\pi}M$ with the measure pulled back from $S^{N}$ by the normal exponential map, as in the proof of Proposition \ref{spheretcmeaning} and Theorems \ref{spherecl}.A and \ref{spherecl}.B.  Similarly, $\mathcal{T}_{\R^{N+1}}(M)$ is the total mass of the unit normal bundle to $M$ in $\R^{N+1}$, with the positive measure pulled back from $S^{N}$ by the Gauss map.  Because these maps coincide, the pulled-back measures coincide.  

\begin{proof}[Proof of Part C of Theorem ~\ref{spherecl}]

Let $M^{n}$ be a submanifold of $S^{N}$ with $\mathcal{T}_{S^{N}}(M) = 2$.  By isometrically embedding $S^{N}$ in $\R^{N+1}$ as above, $M$ is also a submanifold of $\R^{N+1}$ with $\mathcal{T}_{\R^{N+1}}(M) = 2$, and by Part C of Theorem \ref{cl}, $M$ is the boundary of a convex set in an affine subspace $\mathcal{A}$ of $\R^{N+1}$ of dimension $(n+1)$.  $M$ is therefore a closed, embedded $n$-dimensional submanifold of $\mathcal{A} \cap S^{N}$, which is homeomorphic to $S^{n}$.  $M$ must therefore be equal to $\mathcal{A} \cap S^{N}$.  Let $\mathcal{V}$ be the unique $(n+2)$-dimensional linear subspace of $\R^{N+1}$ containing the affine subspace $\mathcal{A}$.  $\mathcal{V} \cap S^{N}$ is an $(n+1)$-dimensional totally geodesic subsphere of $S^{N}$, and $M$ is embedded in $\mathcal{V} \cap S^{N}$ as the boundary of a geodesic ball. \\

To see the converse, if $S^{n}$ is the boundary of a geodesic ball in a totally geodesic $(n+1)$-dimensional subsphere $S^{n+1}$ in $S^{N}$, we isometrically embed $S^{n+1}$ as the unit sphere in $\R^{n+2}$.  This isometric embedding takes $S^{n}$ to the boundary of a geodesic ball about a point $q$ in the unit sphere in $\R^{n+2}$ - the boundary of such a geodesic ball is given by the intersection of the unit sphere $S^{n+1}$ with an $(n+1)$-dimensional affine subspace $\mathcal{A}$ of $\R^{n+2}$.  Therefore, by Proposition \ref{cltcequalsspheretc} and Theorem \ref{cl}.C, $\mathcal{T}_{S^{n+1}}(S^{n}) = 2$, and because the total absolute curvature of submanifolds of spheres is preserved under totally geodesic embeddings into higher-dimensional spheres, $\mathcal{T}_{S^{N}}(S^{n}) = \mathcal{T}_{S^{n+1}}(S^{n})$.  

\end{proof} 

We end this section by noting that any compact Riemannian manifold can be isometrically embedded in a sphere of constant curvature $1$ of sufficiently high dimension:  The Nash embedding theorems, in \cite{NaI} and \cite{NaII}, imply that every compact Riemannian manifold $M$ can be isometrically embedded in a Euclidean space $\R^{k}$ of sufficiently high dimension.  We can take such an embedding to have its image in a cell $\lbrack 0, D \rbrack^{k}$.  Letting $d$ be an integer greater than $D^{2}$, the long diagonal of the unit cube in $\R^{d}$ has length greater than $D$, so $\lbrack 0, D \rbrack^{k}$, and $M$ itself, can be isometrically embedded in the unit cube in $\R^{dk}$.  Finally, the unit cube in $\R^{dk}$ can be isometrically embedded in a fundamental domain for a flat torus in $S^{2dk-1}$, so $M$ also admits such an embedding. 


\section{Total Curvature and the Betti Numbers of Complex Projective Manifolds}
\label{bettis}

We recall the definition of the total absolute curvature of a complex projective manifold, in Definition \ref{cptc}, as an integral in terms of its second fundamental form:  If $M$ is of complex dimension $m$, holomorphically immersed in $\C P^{N}$, then: \\ 


$\mathcal{T}(M) = \frac{2}{Vol(\C P^{N})} \displaystyle \text{\LARGE $\int\limits_{\text{\scriptsize $\nu^{<\frac{\pi}{2}}M$}}$}  \text{\LARGE $\vert$} det \left( \cos(r) Id_{T_{p}M} -\left( \frac{\sin r}{r} \right) A_{\vec{v}} \right) \text{\LARGE $\vert$} \cos(r) \left( \frac{\sin r}{r} \right)^{\tiny (2N-2m-1)} dVol_{ \text{\tiny $\nu^{<\frac{\pi}{2}}M$}}(\vec{v}).  $ 


\medskip 

It will be helpful to note that this can also be written as: \\ 

\begin{center}

$\mathcal{T}(M) = \frac{2}{Vol(\C P^{N})} \displaystyle \text{\LARGE $\int\limits_{\text{\scriptsize $\nu^{1}M$}} \int\limits_{\text{\scriptsize $0$}}^{\text{\scriptsize $\frac{\pi}{2}$}}$ }\text{\LARGE $\vert$} det \left( \cos(r) Id_{T_{p}M} -\sin(r) A_{\vec{u}} \right) \text{\LARGE $\vert$} \cos(r) \sin^{(2N-2m-1)}(r)$ $dr$ $dVol_{\text{\tiny $\nu^{1}M$}}(\vec{u}).$

\end{center}
\medskip

One of the facts we will need to prove Theorem \ref{cpcl} is:  

\begin{proposition}
\label{tcsame}

Let $M$ be a compact complex manifold holomorphically immersed in $\C P^{N}$, and let $\widetilde{M}$ be the circle bundle over $M$ which is induced by the Hopf fibration $\pi : S^{2N+1} \rightarrow \C P^{N}$.  With the immersion of $\widetilde{M}$ in $S^{2N+1}$ induced by the immersion of $M$ in $\C P^{N}$, we have:  

\bigskip 
\begin{center}
$\mathcal{T}_{S^{2N+1}}(\widetilde{M}) = \mathcal{T}_{\C P^{N}}(M).$
\end{center}

\end{proposition}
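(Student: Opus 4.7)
The plan is to unfold both invariants into integrals over $\nu^{1}M$ and identify them term by term, exploiting that the Hopf submersion is Riemannian and that the normal geometry of $\widetilde{M}$ in $S^{2N+1}$ is determined by the normal geometry of $M$ in $\C P^{N}$. Since the fiber of $\widetilde{M}\to M$ is tangent to $\widetilde{M}$, every normal vector $\tilde{\vec v}\in \nu\widetilde{M}$ is horizontal, so $d\pi$ is a fiberwise isometry from $\nu\widetilde{M}$ onto the pullback of $\nu M$, and horizontal normal geodesics in $S^{2N+1}$ project to normal geodesics of $M$ in $\C P^{N}$.

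The central step is to relate the second fundamental form $\widetilde{A}_{\tilde{\vec u}}$ of $\widetilde{M}\subset S^{2N+1}$ to $A_{\vec u}$ of $M\subset \C P^{N}$. Extending $\tilde{\vec u}$ to a horizontal normal vector field and using O'Neill's formulas for the Hopf submersion, the covariant derivative $\nabla^{S^{2N+1}}_{\tilde e_{i}}\tilde{\vec u}$ along the horizontal lift of a principal direction $e_{i}$ splits into the horizontal lift of $\nabla^{\C P^{N}}_{e_{i}}\vec u$ plus a vertical term proportional to $\langle Je_{i},\vec u\rangle$. Because $M$ is a complex submanifold of a K\"ahler manifold, $T_{p}M$ is $J$-invariant, so $Je_{i}\in T_{p}M$ is orthogonal to $\vec u\in\nu_{p}M$ and the vertical contribution vanishes; $\tilde e_{i}$ is therefore a principal direction of $\widetilde{A}_{\tilde{\vec u}}$ with the same principal curvature $\kappa_{i}$. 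Differentiating $\tilde{\vec u}$ along the Hopf fiber yields $\nabla^{S^{2N+1}}_{T}\tilde{\vec u}=J\tilde{\vec u}$, which is itself the horizontal lift of the normal vector $J\vec u\in\nu_{p}M$ and hence normal to $\widetilde{M}$, so the vertical direction $T$ lies in the kernel of $\widetilde{A}_{\tilde{\vec u}}$. Putting these facts together,
\[
\det\bigl(\cos(r)\,\mathrm{Id}_{T_{\tilde p}\widetilde{M}}-\sin(r)\,\widetilde{A}_{\tilde{\vec u}}\bigr)=\cos(r)\prod_{i=1}^{2m}\bigl(\cos(r)-\kappa_{i}\sin(r)\bigr),
\]
with the extra factor of $\cos(r)$ arising from the vertical direction in $T\widetilde{M}$.

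With this identity, formula (\ref{atcsphereformula}) expresses $\mathcal{T}_{S^{2N+1}}(\widetilde{M})$ as an integral over $\nu^{1}M$ in which the fiber integration along the $2\pi$-circles of the Hopf bundle contributes an overall factor of $2\pi$. It remains to reconcile that the sphere integral runs over $r\in[0,\pi)$ while the $\C P^{N}$ integral in Definition \ref{cptc} runs over $r\in[0,\pi/2)$. I would split the sphere integral at $\pi/2$ and substitute $s=\pi-r$: since $M$ is complex we have $A_{\vec u}J=-JA_{\vec u}$, so the principal curvatures come in pairs $\pm\kappa_{j}$ and $\prod_{i}(\cos r-\kappa_{i}\sin r)=\prod_{j}(\cos^{2}r-\kappa_{j}^{2}\sin^{2}r)$, which together with $\sin^{2N-2m-1}(r)$ and $|\cos r|$ is invariant under $r\mapsto\pi-r$. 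The two halves of the integral thus match and produce the factor of $2$ built into Definition \ref{cptc}, while the classical identity $Vol(S^{2N+1})=2\pi\,Vol(\C P^{N})$ for the Hopf submersion absorbs the $2\pi$ from fiber integration, so the two invariants coincide.

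The main obstacle I expect is the identification of $\widetilde{A}_{\tilde{\vec u}}$ in the second paragraph, which requires combining O'Neill's fundamental tensor for the Hopf submersion with the complex structure identities for $M$ and careful tracking of sign conventions; once this is established, the symmetry argument producing the factor of two and the volume identity for the Hopf fibration are routine.
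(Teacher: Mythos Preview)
Your proposal is correct and follows essentially the same route as the paper: the paper proves separately (Propositions \ref{IIisthesame} and \ref{eqop}) that horizontal lifts of principal directions of $A_{\vec u}$ are principal for $\widetilde{A}_{\tilde{\vec u}}$ with the same curvatures, that the Hopf direction has principal curvature $0$, and that principal curvatures come in $\pm\kappa$ pairs, then uses the $r\mapsto\pi-r$ symmetry, fiber integration, and $Vol(S^{2N+1})=2\pi\,Vol(\C P^{N})$ exactly as you outline. The only difference is cosmetic: you invoke the explicit form of O'Neill's $A$-tensor for the Hopf fibration to kill the vertical term via $\langle Je_{i},\vec u\rangle=0$, whereas the paper verifies the vanishing of $[\widetilde{E}_{i},\widetilde{U}]^{v}$ by a direct computation in $\C^{N+1}$.
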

\medskip

Proposition \ref{cppc} implies that the total absolute curvature of a complex projective manifold has the same meaning as the total absolute curvature of a submanifold of Euclidean space.  However, the adaptation of Chern and Lashof's proofs which we applied to submanifolds of spheres in Section \ref{sphereresults} breaks down for submanifolds of complex projective space.  This is because the cut locus of a point in complex projective space is a complex projective hyperplane, of real codimension $2$.  If $M$ is a submanifold of complex projective space which meets the cut locus of a point $q$, the distance function from $q$ may not be smooth when restricted to $M$ - in particular, it may not be Morse.  If $M$ has real dimension $2$ or greater, the union over the points of $M$ of their cut loci in $\C P^{N}$ has positive measure.  Because of this, the total absolute curvature is no longer equal to the average number of critical points of a family of Morse functions on $M$. \\

For closed complex submanifolds of $\C P^{N}$, we can be more precise about the scope of this issue: such a manifold necessarily meets the cut locus of every point in $\C P^{N}$, because a closed complex projective manifold of complex dimension $m$ in $\C P^{N}$ intersects every linear subspace of dimension $N-m$ or greater.  Because of this, in general, none of the ambient distance functions will have smooth restrictions to $M$ (one exception is among complex projective manifolds contained linear subspaces, in which case some will be constant on $M$.)  In addition, the intersection of a closed $m$-dimensional complex projective manifold with a linear subspace $\C P^{N-1}$ is a subvariety of $M$ of dimension $m-1$.  This intersection is topologically essential in $M$ - for example, the Lefschetz hyperplane theorem implies that the homomorphisms on homology induced by the inclusion of a hyperplane section are surjective up to one less than half the real dimension of $M$.  This implies that, even if one smooths the distance functions from points in the ambient projective space, their critical sets on closed complex projective manifolds will include topologically essential submanifolds, and they will therefore not be Morse. \\

Because the Chern-Lashof theorems hold for submanifolds of spheres in full generality, as shown in Section \ref{sphereresults}, we will prove Theorem \ref{cpcl} by relating the geometry and topology of a complex projective manifold to those of its pre-image via the Hopf fibration.  Proposition \ref{tcsame} is the first observation that we will need to do this - its proof is based on the following result, which gives a complete description of the second fundamental form of $\widetilde{M}$ in $S^{2N+1}$.    

\begin{proposition}  
\label{IIisthesame}

Let $M$ be a complex manifold holomorphically immersed in $\C P^{N}$ and $\widetilde{M}$ its pre-image in $S^{2N+1}$ via the Hopf fibration.  Let $\vec{u}$ be a unit normal vector to $M$ at $p$; $e_{1}, ... , e_{n}$ a set of principal vectors for the second fundamental form $A_{\vec{u}}$; and $\kappa_{1}, ... , \kappa_{n}$ the principal curvatures of $e_{1}, ... , e_{n}$.  Let $\widetilde{u}, \widetilde{e}_{1}, ... , \widetilde{e}_{n}$ be the horizontal lifts of $\vec{u}, e_{1}, ..., e_{n}$ at any point $\widetilde{p}$ in $\widetilde{M}$ above $p$. \\ 

Then $\widetilde{u}$ is normal to $\widetilde{M}$, $\widetilde{e}_{1}, ... , \widetilde{e}_{n}$ are principal vectors for the second fundamental form $A_{\widetilde{u}}$, and each principal direction $\widetilde{e}_{i}$ has the same principal curvature $\kappa_{i}$ as its image $e_{i}$.  The tangent to the Hopf fibre through $\widetilde{p}$ is also a principal direction for $A_{\widetilde{u}}$ with principal curvature $0$.

\end{proposition}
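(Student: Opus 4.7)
The plan is to compute $A_{\widetilde{u}}$ via O'Neill's formulas for the Riemannian submersion $\pi : S^{2N+1} \to \C P^{N}$, using the fact that $M$ is a complex submanifold at the one crucial step.

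First I would identify the tangent and normal spaces of $\widetilde{M}$. Since $\widetilde{M} = \pi^{-1}(M)$ contains every Hopf fibre that meets it, $T_{\widetilde{p}}\widetilde{M}$ splits orthogonally as the horizontal lift of $T_{p}M$ together with the vertical line tangent to the fibre, while $\nu_{\widetilde{p}}\widetilde{M}$ is exactly the horizontal lift of $\nu_{p}M$. In particular $\widetilde{u}$ is normal to $\widetilde{M}$, as claimed, and the $\widetilde{e}_{i}$ together with the unit vertical field $V$ form a basis of $T_{\widetilde{p}}\widetilde{M}$.

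Next I would compute $A_{\widetilde{u}}(\widetilde{e}_{i})$. Extending $\vec{u}$ and $e_{i}$ to local fields on $M$ and lifting horizontally, O'Neill's formula gives
\begin{equation*}
\nabla^{S^{2N+1}}_{\widetilde{e}_{i}} \widetilde{u} \;=\; \widetilde{\nabla^{\C P^{N}}_{e_{i}} \vec{u}} \;+\; \tfrac{1}{2}[\widetilde{e}_{i}, \widetilde{u}]^{V}.
\end{equation*}
The horizontal term projects to $\nabla^{\C P^{N}}_{e_{i}} \vec{u}$, whose tangent-to-$M$ component is $-\kappa_{i} e_{i}$, so it lifts to $-\kappa_{i} \widetilde{e}_{i}$ plus a vector in the horizontal lift of $\nu_{p}M$, i.e.\ plus a vector normal to $\widetilde{M}$. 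The vertical term is where the complex structure enters: for the Hopf fibration the O'Neill integrability tensor satisfies $[\widetilde{X}, \widetilde{Y}]^{V} = c\,\omega_{FS}(X,Y)\,V$ for a constant $c$, where $\omega_{FS}$ is the Kähler form of $\C P^{N}$. Because $M$ is complex, $Je_{i} \in T_{p}M$, and since $\vec{u} \perp T_{p}M$ we get $\omega_{FS}(e_{i}, \vec{u}) = g(Je_{i}, \vec{u}) = 0$. Taking tangential parts to $\widetilde{M}$ then yields $A_{\widetilde{u}}(\widetilde{e}_{i}) = \kappa_{i} \widetilde{e}_{i}$.

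For the fibre direction, view $S^{2N+1}\subset \C^{N+1}$ so that $V_{x}=Jx$; a short ambient calculation gives $\nabla^{S^{2N+1}}_{X} V = JX$ for every horizontal $X$. Since horizontal lifts are $S^{1}$-invariant, $[V,\widetilde{u}]=0$, and the torsion-free property gives $\nabla^{S^{2N+1}}_{V}\widetilde{u} = \nabla^{S^{2N+1}}_{\widetilde{u}}V = J\widetilde{u}$. This vector is horizontal and projects to $J\vec{u}$, which is normal to $M$ because $M$ is complex and $\vec{u}\in\nu_{p}M$; hence $J\widetilde{u}$ is normal to $\widetilde{M}$, so $A_{\widetilde{u}}(V)=0$. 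The main obstacle is the identification of the O'Neill integrability tensor for the Hopf fibration and the verification that its vanishing on $(\widetilde{e}_{i},\widetilde{u})$ — together with the vanishing of $A_{\widetilde{u}}(V)$ — both reduce to the $J$-invariance of $T_{p}M$; this is the conceptual content of the proposition.
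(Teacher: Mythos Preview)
Your argument is correct and follows the same overall architecture as the paper: identify $T_{\widetilde p}\widetilde M$ and $\nu_{\widetilde p}\widetilde M$ via the horizontal-vertical splitting, apply O'Neill's formula to $\nabla^{S^{2N+1}}_{\widetilde e_i}\widetilde u$, show the vertical correction vanishes because $M$ is $J$-invariant, and treat the fibre direction separately.

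Where you diverge from the paper is in execution, and in both places your route is shorter. For the vertical term $[\widetilde e_i,\widetilde u]^V$, the paper expands the bracket as $\nabla^{S^{2N+1}}_{\widetilde E_i}\widetilde U-\nabla^{S^{2N+1}}_{\widetilde U}\widetilde E_i$, pairs with the Hopf field $\widetilde E_0=J\widetilde\nu$, and reduces each summand to zero by repeated use of the Hermitian and K\"ahler identities in $\C^{N+1}$; you instead quote the known identification of the O'Neill integrability tensor of the Hopf fibration with $\omega_{FS}$ and kill it in one line via $\omega_{FS}(e_i,\vec u)=g(Je_i,\vec u)=0$. For the fibre direction, the paper computes $\widetilde h(\nabla^{S^{2N+1}}_{\widetilde E_0}\widetilde U,\widetilde E_j)$ term by term, while you use the $S^1$-invariance of horizontal lifts to get $[V,\widetilde u]=0$, then torsion-freeness and the ambient formula $\nabla^{S^{2N+1}}_X V=JX$ to conclude $\nabla^{S^{2N+1}}_V\widetilde u=J\widetilde u\in\nu_{\widetilde p}\widetilde M$. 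The paper's version is self-contained and makes explicit exactly which K\"ahler identities are being used; yours is more conceptual but relies on the reader knowing (or trusting) the Hopf-fibration formulas you invoke. Either is fine here.
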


\begin{proof}  

We let $\widetilde{\nu}$ denote the outward unit normal to $S^{2N+1}$ in $\R^{2N+2} = \C^{N+1}$.  We let $J$ denote the complex structure of $\C P^{N}$ and of $M$, and also of $\C^{N+1}$.  We let $ h_{FS}$ denote the canonical metric on $\C P^{N}$, $h$ the induced metric on $M$ and $\widetilde{h}$ the canonical metric on $\C^{N+1}$ and on the unit sphere $S^{2N+1}$ in $\C^{N+1}$.  We let $\widetilde{E_{0}}$ denote the vector field $J(\widetilde{\nu})$ on $S^{2N+1}$.  $\widetilde{E_{0}}$ is then a unit-length vector field on $S^{2N+1}$ tangent to the Hopf fibres.  Its orthogonal complement in the tangent bundle of $S^{2N+1}$ is invariant under the action of the complex structure of $\C^{N+1}$, and the action of this complex structure on this distribution in $TS^{2N+1}$ induces the complex structure on $\C P^{N}$. \\  

For $e_{i}$ a principal vector for $\vec{u}$ with principal curvature $\kappa_{i}$ as above, let $E_{i}$ and $U$ be unit-length vector fields on a neighborhood of $\C P^{N}$ which extend $e_{i}$ and $\vec{u}$ respectively, with $E_{i}$ tangent and $U$ normal to $M$.  For any vector or vector field defined on a neighborhood of $\C P^{N}$, let a tilde denote its horizontal lift to any neighborhood in $S^{2N+1}$ via the Hopf fibration. \\

By O'Neill's formula,   

\begin{center}
\begin{equation}
\label{horzlift}
\nabla_{\widetilde{E}_{i}}^{S^{2N+1}}\widetilde{U} = \widetilde{\nabla_{E_{i}}^{\C P^{N}}U} + \frac{1}{2} \lbrack \widetilde{E}_{i}, \widetilde{U} \rbrack^{v}. 
\end{equation} 
\end{center}

\bigskip 

Because $\nabla_{E_{i}}^{\C P^{N}}U = -A_{\vec{u}}(e_{i}) + {\huge (} \nabla_{E_{i}}^{\C P^{N}}U {\huge )}^{\perp} = -\kappa_{i}e_{i} +  {\huge (} \nabla_{E_{i}}^{\C P^{N}}U {\huge )}^{\perp}$ at $p$, where ${\huge (} \nabla_{E_{i}}^{\C P^{N}}U {\huge )}^{\perp}$ is the component of $\nabla_{E_{i}}^{\C P^{N}}U$ normal to $M$, and because the horizontal lift of $-\kappa_{i}e_{i} +  {\huge (} \nabla_{E_{i}}^{\C P^{N}}U {\huge )}^{\perp}$ is given by $-\kappa_{i} \widetilde{e}_{i} + \widetilde{ {\huge (} \nabla_{E_{i}}^{\C P^{N}}U {\huge )}^{\perp} } $, at $\widetilde{p}$ we have:  

\begin{center}
\begin{equation}
\label{otherhl}
\nabla_{\widetilde{E}_{i}}^{S^{2N+1}}\widetilde{U} = -\kappa_{i} \widetilde{e}_{i} + \widetilde{ {\huge (} \nabla_{E_{i}}^{\C P^{N}}U {\huge )}^{\perp} } + \frac{1}{2} \lbrack \widetilde{E}_{i}, \widetilde{U} \rbrack^{v}. 
\end{equation}
\end{center}
\medskip  

We note that $\widetilde{ {\huge (} \nabla_{E_{i}}^{\C P^{N}}U {\huge )}^{\perp} }$ is normal to $\widetilde{M}$.  Noting that vertical directions for the Hopf fibration are tangent to $\widetilde{M}$, we can rewrite (\ref{horzlift}) and (\ref{otherhl}) as follows: \\ 

\begin{center}
$\nabla_{\widetilde{E}_{i}}^{S^{2N+1}}\widetilde{U} = -\kappa_{i} \widetilde{e}_{i} + (\frac{1}{2}) \lbrack \widetilde{E}_{i}, \widetilde{U} \rbrack^{v}  + {\huge (} \nabla_{\widetilde{E}_{i}}^{S^{2N+1}}\widetilde{U} {\huge )}^{\perp}$,
\end{center}
\bigskip  

where ${\huge (} \nabla_{\widetilde{E}_{i}}^{S^{2N+1}}\widetilde{U} {\huge )}^{\perp}$ again denotes the normal component of $\nabla_{\widetilde{E}_{i}}^{S^{2N+1}}\widetilde{U}$, which is equal to $\widetilde{ {\huge (} \nabla_{E_{i}}^{\C P^{N}}U {\huge )}^{\perp} }$.  Noting also that $\nabla_{\widetilde{E}_{i}}^{S^{2N+1}}\widetilde{U} = -A_{\widetilde{u}}(\widetilde{e}_{i}) +{\huge (} \nabla_{\widetilde{E}_{i}}^{S^{2N+1}}\widetilde{U} {\huge )}^{\perp}$ at $\widetilde{p}$, where $A_{\widetilde{u}}$ is the second fundamental form of $\widetilde{M}$ in the normal direction $\widetilde{u}$, we infer that: \\  

\begin{center}
$A_{\widetilde{u}}(\widetilde{e}_{i}) = \kappa_{i} \widetilde{e}_{i} - \frac{1}{2} \lbrack \widetilde{E}_{i}, \widetilde{U} \rbrack^{v}.$
\end{center}
\bigskip

The proof that $\widetilde{e}_{i}$ is a principal vector for $\widetilde{u}$, with principal curvature $\kappa_{i}$, will be complete once we show that $\lbrack \widetilde{E}_{i}, \widetilde{U} \rbrack^{v}$ is zero. \\ 

Because the vertical distribution for the Hopf fibration is spanned by $\widetilde{E}_{0}$, $\lbrack \widetilde{E}_{i}, \widetilde{U} \rbrack^{v}$ is equal to $\widetilde{h} {\Huge (} \lbrack \widetilde{E}_{i}, \widetilde{U} \rbrack, \widetilde{E}_{0} {\Huge )}\widetilde{E}_{0}$.  We write this in terms of the connection in $S^{2N+1}$ as follows: 

\begin{center}
\begin{equation}
\label{liebracket}
\widetilde{h} {\Huge (} \lbrack \widetilde{E}_{i}, \widetilde{U} \rbrack, \widetilde{E}_{0} {\Huge )}\widetilde{E}_{0} = \text{\LARGE $($} \widetilde{h} \text{\Large $($} \nabla_{\widetilde{E}_{i}}^{S^{2N+1}}\widetilde{U},  \widetilde{E}_{0} \text{\Large $)$} -  \widetilde{h} \text{\Large $($} \nabla_{\widetilde{U}}^{S^{2N+1}}\widetilde{E}_{i}, \widetilde{E}_{0} \text{\Large $)$} \text{\LARGE $)$} \widetilde{E}_{0}. 
\end{equation}
\end{center}
\medskip 

Because the Euclidean metric on $\C^{N+1}$ is Hermitian and $\widetilde{E}_{0} = J(\widetilde{\nu})$, we have: \\ 

\begin{center}
$\widetilde{h}{\Huge(} \nabla_{\widetilde{E}_{i}}^{S^{2N+1}}\widetilde{U}, \widetilde{E}_{0} {\Huge )} = \widetilde{h}{\Huge(} \nabla_{\widetilde{E}_{i}}^{\C^{N+1}}\widetilde{U}, \widetilde{E}_{0} {\Huge )} = \widetilde{h}{\Huge(} J(\nabla_{\widetilde{E}_{i}}^{\C^{N+1}}\widetilde{U}), J(\widetilde{E}_{0}) {\Huge )} = -\widetilde{h}{\Huge(} J(\nabla_{\widetilde{E}_{i}}^{\C^{N+1}}\widetilde{U}), \widetilde{\nu} {\Huge )}. $
\end{center}

\medskip  

Because the metric on $\C^{N+1}$ is K\"ahler, its connection $\nabla^{\C^{N+1}}$ commutes with the complex structure $J$, so letting $J(\widetilde{U})$ denote the vector field which results from applying the complex structure of $\C^{N+1}$ to $\widetilde{U}$, we have: \\ 

\begin{center} 

$\widetilde{h}{\Huge(} J(\nabla_{\widetilde{E}_{i}}^{\C^{N+1}}\widetilde{U}), \widetilde{\nu} {\Huge )} = \widetilde{h}{\Huge(} \nabla_{\widetilde{E}_{i}}^{\C^{N+1}}J(\widetilde{U}), \widetilde{\nu} {\Huge )} = \widetilde{E}_{i}{\Huge (}\widetilde{h}{\huge (}J(\widetilde{U}), \widetilde{\nu} {\huge )} {\Huge )} - \widetilde{h} {\Huge (} J(\widetilde{U}), \nabla_{\widetilde{E}_{i}}^{\C^{N+1}}\widetilde{\nu} {\Huge )}.$

\end{center}
\medskip  

Because $\widetilde{\nu}$ is the outward unit normal to $S^{2N+1}$, we have $\nabla_{\widetilde{E}_{i}}^{\C^{N+1}}\widetilde{\nu} = \widetilde{E}_{i}$, so the above is equal to: \\ 

\begin{center}
$\widetilde{E}_{i}{\Huge (}\widetilde{h}{\huge (}J(\widetilde{U}), \widetilde{\nu} {\huge )} {\Huge )} - \widetilde{h} {\Huge (} J(\widetilde{U}), \widetilde{E}_{i} {\Huge )}.$
\end{center}
\medskip 

Because $M$ is a complex submanifold of $\C P^{N}$, its tangent and normal spaces are preserved by the complex structure of $\C P^{N}$.  Letting $J(U)$ denote the vector field which results from applying the complex structure of $\C P^{N}$ to $U$, we therefore have that $J(U)$ is normal to $M$.  Because the complex structure on $\C^{N+1}$ preserves the subbundle of $TS^{2N+1}$ orthogonal to $\widetilde{E}_{0}$, $J(\widetilde{U})$ is a horizontal vector field for the Hopf fibration.   And because the complex structure on $\C P^{N}$ is induced by the action of the complex structure of $\C^{N+1}$ as described above, $J(\widetilde{U})$ is the horizontal lift of $J(U)$. \\ 

Applying this to the formula above, we have that $\widetilde{h} {\Huge (} J(\widetilde{U}), \widetilde{E}_{i} {\Huge )} = h_{FS} {\Huge (} J(U), E_{i} {\Huge )} = 0$ because $J(U)$ is normal to $M$ and $E_{i}$ is tangent to $M$ in $\C P^{N}$.  We have $\widetilde{h} {\Huge (} J(\widetilde{U}), \widetilde{\nu} {\Huge )} \equiv 0$ because $J(\widetilde{U})$ is tangent and $\widetilde{\nu}$ normal to $S^{2N+1}$.  This then implies that $\widetilde{E}_{i}{\Huge (}\widetilde{h}{\huge (}J(\widetilde{U}), \widetilde{\nu} {\huge )} {\Huge )}$ is zero.  And this implies that the expression $\widetilde{h}{\Huge(} \nabla_{\widetilde{E}_{i}}^{S^{2N+1}}\widetilde{U}, \widetilde{E}_{0} {\Huge )}$ in (\ref{liebracket}) is zero. \\   

By a similar set of observations, the term $\widetilde{h} {\Huge (} \nabla_{\widetilde{U}}^{S^{2N+1}}\widetilde{E}_{i}, \widetilde{E}_{0} {\Huge )}$ in (\ref{liebracket}) is also zero.  This implies that $\lbrack \widetilde{E}_{i}, \widetilde{U} \rbrack^{v}$ is zero and completes the proof that $\widetilde{e}_{i}$ is a principal vector for $A_{\widetilde{u}}$, with principal curvature $\kappa_{i}$, just as $e_{i}$ is a principal vector for $A_{\vec{u}}$ with prinicipal curvature $\kappa_{i}$.  To see that the tangent to the Hopf fibre is also a principal vector, with principal curvature zero, we note that: \\   

\begin{center}
$\widetilde{h} {\Huge (} \nabla_{\widetilde{E}_{0}}^{S^{2N+1}} \widetilde{U}, \widetilde{E}_{0} {\Huge )} =  \widetilde{E_{0}}( \widetilde{h} {\Huge (} \widetilde{U}, \widetilde{E_{0}} {\Huge )} ) - \widetilde{h} {\Huge (} \widetilde{U}, \nabla^{S^{2N+1}}_{\widetilde{E_{0}}}\widetilde{E}_{0} {\Huge )}.$
\end{center} 

\medskip  

Because the Hopf fibres are geodesics of $S^{2N+1}$, we have $\nabla^{S^{2N+1}}_{\widetilde{E}_{0}}\widetilde{E}_{0} = 0$.  We also have $\widetilde{h} {\Huge (} \widetilde{U}, \widetilde{E}_{0} {\Huge )} \equiv 0$ because $\widetilde{U}$ is normal and $\widetilde{E}_{0}$ tangent to $\widetilde{M}$.  This implies that $\widetilde{E_{0}}( \widetilde{h} {\Huge (} \widetilde{U}, \widetilde{E_{0}} {\Huge )} ) = 0$, and as a consequence, $\widetilde{h} {\Huge (} \nabla_{\widetilde{E}_{0}}^{S^{2N+1}} \widetilde{U}, \widetilde{E}_{0} {\Huge )} = 0$. \\

Letting $\widetilde{E}_{i}$ be a horizontal lift of a vector field $E_{i}$ as above for $i = 1, 2, \cdots, n$, we have that $\widetilde{h} {\Huge (} \nabla_{\widetilde{E}_{0}}^{S^{2N+1}} \widetilde{U}, \widetilde{E}_{i} {\Huge )} = \widetilde{E}_{0} (\widetilde{h} {\Huge (} \widetilde{U}, \widetilde{E}_{i} {\Huge )}) - \widetilde{h} {\Huge (} \widetilde{U}, \nabla^{S^{2N+1}}_{\widetilde{E_{0}}}\widetilde{E_{i}} {\Huge )} $.  $\widetilde{E}_{0} (\widetilde{h} {\Huge (} \widetilde{U}, \widetilde{E}_{i} {\Huge )})$ is zero because $\widetilde{h} {\Huge (} \widetilde{U}, \widetilde{E}_{i} {\Huge )}$ is zero, so we are left with: \\ 
 
\begin{center}
$ - \widetilde{h} {\Huge (} \widetilde{U}, \nabla^{S^{2N+1}}_{\widetilde{E_{0}}}\widetilde{E_{i}} {\Huge )}  = \widetilde{h} {\Huge (} \widetilde{U}, \lbrack \widetilde{E}_{i}, \widetilde{E}_{0} \rbrack - \nabla^{S^{2N+1}}_{\widetilde{E_{i}}}\widetilde{E_{0}} {\Huge )}.$
\end{center} 

\medskip 

$\widetilde{h} {\Huge (} \widetilde{U}, \lbrack \widetilde{E}_{i}, \widetilde{E}_{0} \rbrack {\Huge )}$ is zero because $\widetilde{U}$ is normal and $\lbrack \widetilde{E}_{i}, \widetilde{E}_{0} \rbrack$ is tangent to $\widetilde{M}$, so this leaves us with $\widetilde{h} {\Huge (} \widetilde{U}, \nabla^{S^{2N+1}}_{\widetilde{E_{i}}}\widetilde{E_{0}} {\Huge )}$.  This is equal to $\widetilde{h} {\Huge (} A_{\widetilde{u}}(\widetilde{e}_{i}), \widetilde{E}_{0} {\Huge )}$.  Because $\widetilde{e}_{i}$ is a principal vector for $A_{\widetilde{u}}$, as we showed above, this is equal to $\widetilde{h} {\Huge (} \kappa_{i} \widetilde{e}_{i}, \widetilde{E}_{0} {\Huge )} = \kappa_{i} \widetilde{h}(\widetilde{e}_{i}, \widetilde{E}_{0}) = 0$.  This establishes that $\widetilde{h} {\Huge (} \nabla_{\widetilde{E}_{0}}^{S^{2N+1}} \widetilde{U}, \widetilde{E}_{i} {\Huge )} = 0$ and completes the proof that the Hopf fibres are principal directions for $\widetilde{M}$ in $S^{2N+1}$, with principal curvature zero.  

\end{proof}  

The proof of Proposition \ref{tcsame} also uses the following well-known fact, which gives a simpler expression for $\mathcal{T}_{\C P^{N}}(M)$ and is important in some of our later results: 

\begin{proposition}
\label{eqop}

Let $M$ be a complex submanifold of a K\"ahler manifold $P$.  Suppose $\vec{u}$ is a unit normal vector to $M$ at $p$ and $e$ is an principal vector for the second fundamental form $A_{\vec{u}}$, with principal curvature $\kappa$.  Let $J$ denote the complex structure.  Then $J(e)$ is also a principal vector for $A_{\vec{u}}$, with principal curvature $-\kappa$.

\end{proposition}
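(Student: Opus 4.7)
The plan is to reduce the statement to the standard complex linearity identity for the second fundamental form of a complex submanifold of a Kähler manifold. Writing $II$ for the second fundamental form of $M$ in $P$ as a $\nu M$-valued symmetric $2$-tensor on $TM$, so that $\langle A_{\vec{u}}(X), Y\rangle = \langle II(X,Y), \vec{u}\rangle$ for tangent $X, Y$, I would first establish the identity
\begin{equation*}
II(X, JY) \;=\; J \cdot II(X, Y) \;=\; II(JX, Y).
\end{equation*}
Because $M$ is a complex submanifold, both $TM$ and $\nu M$ are $J$-invariant at each point, and because $P$ is Kähler, its Levi--Civita connection $\nabla^{P}$ commutes with $J$. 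Choosing a tangential extension $\widetilde{Y}$ of $Y$, so that $J\widetilde{Y}$ is again tangential to $M$, one computes $II(X, JY) = (\nabla^{P}_{X} J\widetilde{Y})^{\perp} = (J \nabla^{P}_{X} \widetilde{Y})^{\perp} = J \cdot (\nabla^{P}_{X} \widetilde{Y})^{\perp} = J \cdot II(X, Y)$, where the third equality uses the $J$-invariance of $\nu M$. The second equality then follows from the symmetry of $II$.

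Given this identity, the proposition reduces to a short inner-product calculation. For any tangent vector $f$ at $p$, applying the Weingarten relation, the complex linearity of $II$, the self-adjointness of $A_{\vec{u}}$, the hypothesis $A_{\vec{u}}(e) = \kappa e$, and the skew-symmetry of $J$ with respect to the Hermitian metric, I would compute
\begin{align*}
\langle A_{\vec{u}}(Je),\, f\rangle
 &= \langle II(Je, f),\, \vec{u}\rangle = \langle II(e, Jf),\, \vec{u}\rangle \\
 &= \langle A_{\vec{u}}(e),\, Jf\rangle = \kappa \langle e, Jf\rangle = -\kappa \langle Je, f\rangle.
\end{align*}
Since $f$ is arbitrary, this gives $A_{\vec{u}}(Je) = -\kappa\, Je$, so that $Je$ is a principal vector of $A_{\vec{u}}$ with principal curvature $-\kappa$.

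There is no real obstacle in this argument; its entire content is that, for complex submanifolds of Kähler manifolds, the shape operator anti-commutes with $J$ on $TM$. The only step that uses the Kähler hypothesis in an essential way is the complex linearity of $II$, and this is itself an immediate consequence of $\nabla^{P} J = 0$ together with the $J$-invariance of the tangent and normal bundles of $M$. Once that identity is in hand, the eigenvalue conclusion is forced by the skew-symmetry of $J$ with respect to the Hermitian inner product.
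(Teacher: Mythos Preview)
Your proof is correct and uses the same ingredients as the paper's---parallelism of $J$, $J$-invariance of $TM$ and $\nu M$, symmetry of the second fundamental form, and skew-symmetry of $J$ for the Hermitian metric. The only organizational difference is that you isolate the complex-linearity identity $II(X,JY)=J\cdot II(X,Y)=II(JX,Y)$ for the $\nu M$-valued form and finish in one line, whereas the paper threads the same computation through the Weingarten map and passes explicitly through $B_{J(\vec{u})}$; the content is the same.
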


Proposition \ref{eqop} follows from the fact that the second fundamental form of a K\"ahler submanifold is anti-invariant under the action of the complex structure: $\langle A_{\vec{u}}(J(x)), J(y) \rangle = -\langle A_{\vec{u}}(x), y \rangle$.  A proof of this result together with several applications to the study of K\"ahler submanifolds can be found in \cite{Si}.  Note that Proposition \ref{eqop} implies K\"ahler submanifolds are minimal - in fact, closed K\"ahler submanifolds are minimal in the very strong sense that they have the minimal volume of any submanifold in their homology class.  This is known as \textit{Wirtinger's inequality} (not to be confused with the more famous inequality for periodic functions) and a proof can be found in \cite{F}.   \\ 

In light of Proposition \ref{eqop}, for a complex projective manifold $M$ of complex dimension $m$, we can write its principal curvatures for a normal vector $\vec{u}$ as $\kappa_{1}, -\kappa_{1}, \kappa_{2}, -\kappa_{2}, \cdots , \kappa_{m}, -\kappa_{m}$. We then have:  

\begin{center}
\begin{equation}
\displaystyle det \text{\Large $($}\cos(r)Id_{T_{p}M} - \sin(r) A_{\vec{u}} \text{\Large $)$} = \prod\limits_{i=1}^{m} \text{\large (}\cos^{2}(r) - \kappa_{i}^{2} \sin^{2}(r) \text{\large )}. 
\end{equation}
\end{center}
\medskip

And we have the following two formulas for $\mathcal{T}_{\C P^{N}}(M)$:  

\begin{center}
\begin{equation}
\text{\scriptsize $\frac{2}{Vol(\C P^{N})}$} \displaystyle \text{\LARGE $\int\limits_{\text{\tiny $\nu^{1}M$}} \int\limits_{\text{\small $0$}}^{\text{\small $\frac{\pi}{2}$}}$} \text{\LARGE $\vert$} \prod\limits_{i=1}^{m} \text{\large $($}\cos^{2}(r) - \kappa_{i}^{2} \sin^{2}(r) \text{\large $)$} \text{\LARGE $\vert$} \cos(r) \sin^{(2N-2m-1)}(r)  \text{\em dr $dVol_{\text{\tiny $\nu^{1}M$}}(\vec{u})$},
\end{equation}
\end{center}

\begin{center}
\begin{equation}
\label{formulaforcptc}
\text{\scriptsize $\frac{2}{Vol(\C P^{N})}$} \displaystyle \text{\LARGE $\int\limits_{\text{\tiny $\nu^{1}M$}} \int\limits_{\text{\small $0$}}^{\text{\small $\frac{\pi}{2}$}}$} \text{\LARGE $\vert$} \sum\limits_{i=0}^{m}(-1)^{i}\sin^{(2N-2m-1+2i)}(r)\cos^{(2m-2i+1)}(r)\sigma_{i}(\kappa^{2})  \text{\LARGE $\vert$} \text{\em dr $dVol_{\nu^{1}M}(\vec{u})$}.
\end{equation}
\end{center}
\medskip

Here, as in (\ref{atcsphereformula}), $\sigma_{i}( \kappa^{2} )$ represents the $i^{th}$ elementary symmetric function of the squares of the principal curvatures of the normal vector $\vec{u}$.  In this case, $\sigma_{m}( \kappa^{2} ) = \kappa_{1}^{2}\kappa_{2}^{2}\dots\kappa_{m}^{2}$ is $(-1)^{m}$ times the Gauss curvature.  

\begin{proof}[Proof of Proposition \ref{tcsame}]

Let $\widetilde{u}$ be a unit normal vector to $\widetilde{M}$ in $S^{2N+1}$, with $\vec{u}$ its image via the differential of the Hopf fibration.  $\vec{u}$ is normal to $M$ in $\C P^{N}$.  We let $\widetilde{\kappa}_{0}, \widetilde{\kappa}_{1}, \widetilde{\kappa}_{2}, \cdots \widetilde{\kappa}_{n}$ be the principal curvatures of $\widetilde{u}$, with $\widetilde{\kappa}_{0} = 0$ corresponding to the principal direction along the Hopf fibre.  We let $\kappa_{1}, -\kappa_{1}, \kappa_{2}, -\kappa_{2}, \cdots, \kappa_{m}, -\kappa_{m}$ denote the principal curvatures of $\vec{u}$, so that we have: \\

\begin{center}
$\widetilde{\kappa}_{0} = 0$, $\widetilde{\kappa}_{1} = \kappa_{1}$, $ \widetilde{\kappa}_{2} = -\kappa_{1}$, $ \widetilde{\kappa}_{3} = \kappa_{2}$, $\widetilde{\kappa}_{4} = -\kappa_{2}, \cdots, \widetilde{\kappa}_{n-1} = \kappa_{m}$, $\widetilde{\kappa}_{n} = -\kappa_{m}.$
\end{center}

\medskip

Let $\widetilde{p}$ denote the basepoint of $\widetilde{u}$ in $\widetilde{M}$ and $p$ its image in $M$.  For $r \in [0, \pi]$, we have: \\  

\begin{center}

$ \text{\em \large det} \text{\Large $($} \cos(r)Id_{T_{\widetilde{p}}\widetilde{M}} - \sin(r) A_{\widetilde{u}} \text{\Large $)$} = \displaystyle \prod\limits_{j=0}^{n} {\large (}\cos(r) - \widetilde{\kappa}_{j} \sin(r){\large )}$

\smallskip  

\begin{equation}
\label{detformula}
 = \cos(r) \prod\limits_{i=1}^{m} {\large (}\cos^{2}(r) - \kappa^{2}_{i} \sin^{2}(r){\large )} =  \cos(r) \text{\em \large det} \text{\Large $($} \cos(r)Id_{T_{p}M} - \sin(r) A_{\vec{u}} \text{\Large $)$}.
\end{equation}

\end{center}

\medskip

Because $\cos^{2}(r) - \kappa^{2}_{i} \sin^{2}(r) = \cos^{2}(\pi - r) - \kappa^{2}_{i} \sin^{2}(\pi - r)$ and $\vert \cos(r) \vert = \vert \cos(\pi - r) \vert$, we have: \\ 

\begin{center}
$\displaystyle \text{\LARGE $\int\limits_{\text{\small $0$}}^{\text{\small $\pi$}}$} \text{\LARGE $\vert$} \text{\em \large det} \text{\Large $($} \cos(r)Id_{T_{\widetilde{p}}\widetilde{M}} - \sin(r) A_{\widetilde{u}} \text{\Large $)$} \text{\LARGE $\vert$} \sin^{(2N - n -1)}(r) dr$

\medskip 

$\displaystyle = \text{\large $2$} \text{\LARGE $\int\limits_{\text{\small $0$}}^{\text{\small $\frac{\pi}{2}$}}$} \text{\LARGE $\vert$} \text{\em \large det} \text{\Large $($} \cos(r)Id_{T_{\widetilde{p}}\widetilde{M}} - \sin(r) A_{\widetilde{u}} \text{\Large $)$} \text{\LARGE $\vert$} \sin^{(2N - n -1)}(r) dr. $

\end{center}

\bigskip

Equation (\ref{detformula}) then implies this is equal to the corresponding integral for $\vec{u}$: \\  

\begin{center} 

$\displaystyle 2 \text{\LARGE $\int\limits_{\text{\small $0$}}^{\text{\small $\frac{\pi}{2}$}}$} \text{\LARGE $\vert$} \text{\em \large det} \text{\Large $($} \cos(r)Id_{T_{\widetilde{p}}\widetilde{M}} - \sin(r) A_{\widetilde{u}} \text{\Large $)$} \text{\LARGE $\vert$} \sin^{(2N - n -1)}(r) dr$

\medskip 

$\displaystyle = 2 \text{\LARGE $\int\limits_{\text{\small $0$}}^{\text{\small $\frac{\pi}{2}$}}$} \text{\LARGE $\vert$}  \text{\em \large det} \text{\Large $($} \cos(r)Id_{T_{p}M} - \sin(r) A_{\vec{u}} \text{\Large $)$} \text{\LARGE $\vert$} \cos(r) \sin^{(2N - n -1)}(r) dr.   $

\end{center} 

\medskip

(The additional term $\cos(r)$ in the integrand for $\mathcal{T}_{\C P^{N}}(M)$, which appears in the second term above, is because of the Jacobi field with initial value $J(\vec{u})$ along the geodesic $\gamma_{\vec{u}}$ in $\C P^{N}$.  This is explained in the proof of Proposition \ref{cppc}, in Section \ref{tfgbc}.) \\ 

The Hopf fibres are geodesics of $S^{2N+1}$ of length $2\pi$.  For $p \in M$, we therefore have: \\  

\begin{center}

$\displaystyle 2\pi \displaystyle \text{\LARGE $\int\limits_{\text{\tiny $\nu^{1}_{p}M$}} \int\limits_{\text{\small $0$}}^{\text{\small $\frac{\pi}{2}$}}$} \text{\LARGE $\vert$}  \text{\em \large det} \text{\Large $($} \cos(r)Id_{T_{p}M} - \sin(r) A_{\vec{u}} \text{\Large $)$} \text{\LARGE $\vert$} \cos(r) \sin^{(2N - n -1)}(r) \text{\em dr d$\vec{u}$ }$ 

\medskip  

$= \displaystyle \displaystyle \text{\LARGE $ \int\limits_{\text{\tiny $\pi^{-1}(p)$}} \int\limits_{\text{\tiny $\nu^{1}_{\widetilde{p}}\widetilde{M}$}} \int\limits_{\text{\small $0$}}^{\text{\small $\frac{\pi}{2}$}}$} \text{\LARGE $\vert$} \text{\em \large det} \text{\Large $($} \cos(r)Id_{T_{\widetilde{p}}\widetilde{M}} - \sin(r) A_{\widetilde{u}} \text{\Large $)$} \text{\LARGE $\vert$} \sin^{(2N - n -1)}(r)  \text{\em dr d$\widetilde{u}$ d$\widetilde{p}$}.$

\end{center}

\medskip

Together with the fact that $Vol(S^{2N+1}) = 2\pi Vol(\C P^{N})$, this implies that: \\  

$ \mathcal{T}_{\C P^{N}}(M) = \frac{2}{Vol(\C P^{N})} \displaystyle \text{\LARGE $\int\limits_{\text{\tiny $\nu^{1}M$}} \int\limits_{\text{\small $0$}}^{\text{\small $\frac{\pi}{2}$}}$}  \text{\LARGE $\vert$}  \text{\em \large det} \text{\Large $($} \cos(r)Id_{T_{p}M} - \sin(r) A_{\vec{u}} \text{\Large $)$} \text{\LARGE $\vert$} \cos(r) \sin^{(2N - n -1)}(r)  \text{\em dr $ dVol_{\nu^{1}M}(\vec{u})$}. $

\medskip

$=\frac{1}{Vol(S^{2N+1})} \displaystyle \text{\LARGE $\int\limits_{\text{\tiny $\nu^{1}\widetilde{M}$}} \int\limits_{\text{\small $0$}}^{\text{\small $\pi$}}$}  \text{\LARGE $\vert$} \text{\em \large det} \text{\Large $($} \cos(r)Id_{T_{\widetilde{p}}\widetilde{M}} - \sin(r) A_{\widetilde{u}} \text{\Large $)$} \text{\LARGE $\vert$} \sin^{(2N - n -1)}(r) \text{\em dr $dVol_{\nu^{1}\widetilde{M}}(\widetilde{u})$} = \mathcal{T}_{S^{2N+1}}(\widetilde{M}).$

\end{proof}

In light of Proposition \ref{tcsame}, we can establish an inequality between the total absolute curvature of a complex projective manifold and its Betti numbers with the following result: 

\begin{proposition}
\label{bettinumberrelationship}

Let $M$ be a compact complex manifold, holomorphically immersed in the complex projective space $\C P^{N}$.  Let $\widetilde{M}$ be the $S^{1}$-bundle over $M$ which is induced from the Hopf fibration $\pi: S^{2N+1} \rightarrow \C P^{N}$ by the immersion of $M$ into $\C P^{N}$. \\ 

For $0 \leq k \leq m = dim_{\C}(M)$, we have: \\ 

$\beta_{k}(M;\R) = \beta_{k}(\widetilde{M};\R) + \beta_{k-2}(\widetilde{M};\R) + \beta_{k-4}(\widetilde{M};\R) + \cdots \text{       } = \sum\limits_{i=0}^{\lfloor \frac{k}{2} \rfloor} \beta_{k-2i}(\widetilde{M};\R).$ \\ 

For $m \leq k \leq n = dim_{\R}(M)$, we have: \\ 

$\beta_{k}(M;\R) = \beta_{k+1}(\widetilde{M};\R) + \beta_{k+3}(\widetilde{M};\R) + \beta_{k+5}(\widetilde{M};\R) + \cdots  \text{       } = \sum\limits_{i=0}^{\lfloor \frac{n-k}{2} \rfloor} \beta_{k+2i+1}(\widetilde{M};\R).$

\end{proposition}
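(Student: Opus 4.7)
The plan is to analyze the Gysin sequence of the oriented circle bundle $\pi \colon \widetilde{M} \to M$ with the aid of the Hard Lefschetz theorem. First I would identify the Euler class $e \in H^2(M;\R)$ of this bundle. Since $\widetilde{M}$ is pulled back from the Hopf fibration $S^{2N+1} \to \C P^N$, whose Euler class is represented by the Fubini--Study K\"ahler form $\omega_{FS}$, one has $e = [\omega_{FS}|_M]$, which is the K\"ahler class of the projectively induced metric on the immersed complex submanifold $M$. Thus $L := \cdot \cup e$ is the Lefschetz operator for a K\"ahler metric on $M$, and by the Hard Lefschetz theorem the map $L \colon H^j(M;\R) \to H^{j+2}(M;\R)$ is injective for $j \leq m-1$ and surjective for $j \geq m-1$.

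Next I would write out the relevant portion of the Gysin sequence,
\begin{equation*}
H^{k-2}(M) \xrightarrow{L} H^k(M) \xrightarrow{\pi^*} H^k(\widetilde{M}) \xrightarrow{\pi_*} H^{k-1}(M) \xrightarrow{L} H^{k+1}(M),
\end{equation*}
which yields the short exact sequence
\begin{equation*}
0 \to H^k(M)/L\,H^{k-2}(M) \to H^k(\widetilde{M}) \to \ker\bigl(L \colon H^{k-1}(M) \to H^{k+1}(M)\bigr) \to 0.
\end{equation*}
For $0 \leq k \leq m$, both adjacent Lefschetz maps are injective, so the kernel on the right vanishes and $\beta_k(\widetilde{M}) = \beta_k(M) - \beta_{k-2}(M)$; solving this recursion downward, together with $\beta_{-1}(M) = 0$ and $\beta_0(M) = \beta_0(\widetilde M) = 1$, gives the first formula. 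For $k \geq m+1$, the left map $L$ is surjective (so the first quotient is zero), and the right $L$ is also surjective, so $\dim \ker L = \beta_{k-1}(M) - \beta_{k+1}(M)$; hence $\beta_k(\widetilde{M}) = \beta_{k-1}(M) - \beta_{k+1}(M)$. Setting $j = k - 1 \geq m$ and iterating this upward, using $\beta_{2m+1}(M) = 0$, produces the second formula.

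The one subtlety is the boundary value $j = m$: both formulas assert an expression for $\beta_m(M)$, and one must check that they agree. This is immediate from Poincar\'e duality on the closed oriented $(n+1)$-manifold $\widetilde{M}$, which matches $\beta_{m-2i}(\widetilde{M})$ with $\beta_{m+2i+1}(\widetilde{M})$ term by term. The main content of the argument is really the invocation of Hard Lefschetz, which applies precisely because the pulled-back Fubini--Study class is a genuine K\"ahler class on $M$; once that is in hand, the rest is bookkeeping with the Gysin sequence.
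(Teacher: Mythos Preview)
Your proposal is correct and follows essentially the same route as the paper: identify the Euler class of the circle bundle with (a nonzero multiple of) the K\"ahler class, feed this into the Gysin sequence, and use Hard Lefschetz to split it into short exact sequences giving the recursions $\beta_k(M)=\beta_k(\widetilde{M})+\beta_{k-2}(M)$ for $k\leq m$ and $\beta_k(M)=\beta_{k+1}(\widetilde{M})+\beta_{k+2}(M)$ for $k\geq m$. The only organizational difference is that you package both cases into a single three-term exact sequence and then specialize, and you add the explicit Poincar\'e-duality check that the two formulas agree at $k=m$, which the paper leaves implicit.
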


\begin{proof}

Let $\widetilde{\pi}:\widetilde{M} \rightarrow M$ denote the $S^{1}$-bundle as above.  The fibres of $\widetilde{M}$ are oriented by the $1$-form $d\theta$ dual to the Hopf vector field, and $\widetilde{M}$ is oriented by $d\theta \wedge \widetilde{\pi}^{*}(dVol_{M})$.  The cohomology of $\widetilde{M}$ is therefore related to the cohomology of $M$ by a Gysin sequence, as follows: 

\bigskip  

\begin{center}
{ $\cdots \rightarrow H^{i}(M;\R) \xrightarrow[]{\bullet \smile \chi} H^{i+2}(M;\R) \xrightarrow[]{\widetilde{\pi}^{*}} H^{i+2}(\widetilde{M};\R) \xrightarrow[]{\sigma^{*}} H^{i+1}(M;\R) \xrightarrow[]{\bullet \smile \chi} H^{i+3}(M;\R) \rightarrow \cdots $  }
\end{center}

\bigskip   

In the homomorphism $H^{i}(M;\R) \xrightarrow[]{\bullet \smile \chi} H^{i+2}(M;\R)$, one takes the cup product with $\chi$, the Euler class of the $2$-disk bundle associated to the $S^{1}$-fibration $\widetilde{\pi}:\widetilde{M} \rightarrow M$.  This bundle is topologically equivalent to the complex line bundle associated to $\widetilde{\pi}:\widetilde{M} \rightarrow M$.  We will denote this bundle by $\widetilde{\pi}: \widetilde{L} \rightarrow M$. \\ 

$\widetilde{L}$ is also induced by the immersion of $M$ into $\C P^{N}$, from the complex line bundle associated to the Hopf fibration, which is $\mathcal{O}(-1) \in Pic(\C P^{N})$.  The Euler class of $\mathcal{O}(-1)$ is its first Chern class, which is $[\frac{-1}{\pi} \omega_{FS}]$, where $\omega_{FS}$ is the K\"ahler form of the metric on $\C P^{N}$.  Because the Euler class of $\widetilde{L}$ is the pull-back of that of $\mathcal{O}(-1)$ and the metric on $M$ is the pull-back of that on $\C P^{N}$, we have $\chi = [\frac{-1}{\pi} \omega]$, where $\omega$ is the K\"ahler form of $M$. \\ 

We let $\mathscr{L}: H^{i}(M;\R) \rightarrow H^{i+2}(M;\R)$ denote the Lefschetz operator, i.e. $[\alpha] \mapsto [\omega] \smile [\alpha] = [\omega \wedge \alpha]$.  The hard Lefschetz theorem states that for $0 \leq k \leq m - 1$, $\mathscr{L}^{m-k} : H^{k}(M;\R) \rightarrow H^{2m-k}(M;\R)$ is an isomorphism.  In particular, $\mathscr{L}: H^{i}(M;\R) \rightarrow H^{i+2}(M;\R)$ is injective for $0 \leq i \leq m-1$ and surjective for $m-1 \leq i \leq n-2$.  It is trivially surjective (zero) for $i = n-1$ and $i = n$. \\

Up to the factor $-\frac{1}{\pi}$, the homomorphism $H^{i}(M;\R) \xrightarrow[]{\bullet \smile \chi} H^{i+2}(M;\R)$ in the Gysin sequence is the Lefschetz operator.  By the hard Lefschetz theorem, it is therefore injective for $0 \leq i \leq m-1$.  This injectivity implies that $\sigma^{*}: H^{i+1}(\widetilde{M};\R) \rightarrow H^{i}(M;\R)$ has trivial image, and this implies that $\widetilde{\pi}^{*}: H^{i+1}(M;\R) \rightarrow H^{i+1}(\widetilde{M};\R)$ is surjective.  For $0 \leq i \leq m-1$, using again the injectivity of the Lefschetz operator in the homomorphism $H^{i-1}(M;\R) \xrightarrow[]{\bullet \smile \chi} H^{i+1}(M;\R)$, we then have the following short exact sequence:  

\begin{equation}
\label{lowerhalfses}
0 \rightarrow H^{i-1}(M;\R) \xrightarrow[]{\bullet \smile \chi} H^{i+1}(M;\R) \xrightarrow[]{\widetilde{\pi}^{*}} H^{i+1}(\widetilde{M};\R) \rightarrow 0.
\end{equation} 

\bigskip     

This implies that $\beta_{i+1}(M;\R) = \beta_{i-1}(M;\R) + \beta_{i+1}(\widetilde{M};\R)$.  The same is true of $\beta_{i-1}(M;\R)$, i.e. it is equal to $\beta_{i-3}(M;\R) + \beta_{i-1}(\widetilde{M};\R)$.  For $0 \leq k \leq m$, we therefore have:     

\begin{equation}  
\label{lowerhalfbettiidentity}
\beta_{k}(M;\R) = \beta_{k}(\widetilde{M};\R) + \beta_{k-2}(\widetilde{M};\R) + \cdots + \beta_{k - 2l}(\widetilde{M};\R) + \cdots 
\end{equation}

\bigskip  

Because $\mathscr{L}: H^{i}(M;\R) \rightarrow H^{i+2}(M;\R)$ is surjective for $m-1 \leq i \leq n-2$, we likewise have $\widetilde{\pi}^{*}:H^{i+2}(M;\R) \rightarrow H^{i+2}(\widetilde{M};\R)$ is zero.  This implies that $\sigma^{*}:H^{i+2}(\widetilde{M};\R) \rightarrow H^{i+1}(M;\R)$ is injective, and we have the following short exact sequence:  

\begin{equation}
\label{upperhalfses}
0 \rightarrow H^{i+2}(\widetilde{M};\R) \xrightarrow[]{\sigma^{*}} H^{i+1}(M;\R) \xrightarrow[]{\bullet \smile \chi} H^{i+3}(M;\R) \rightarrow 0.
\end{equation} 

\bigskip     

For $m \leq k \leq n$ this gives us $\beta_{k}(M;\R) = \beta_{k+1}(\widetilde{M};\R) + \beta_{k+2}(M;\R)$, and as in (\ref{lowerhalfbettiidentity}), for $m \leq k \leq n$ we then have:  

\begin{equation}
\label{upperhalfbettiidentity}
\beta_{k}(M;\R) = \beta_{k+1}(\widetilde{M};\R) + \beta_{k+3}(\widetilde{M};\R) + \cdots + \beta_{k+2l+1}(\widetilde{M};\R) + \cdots 
\end{equation}

\end{proof} 

\begin{remark}
\label{prim_cohom}

The proof of Proposition \ref{bettinumberrelationship} establishes a relationship between the cohomology of $\widetilde{M}$ and the primitive cohomology of $M$:  for $0 \leq k \leq dim_{\C}(M)$, $H^{k}(\widetilde{M};\R)$ is isomorphic to the primitive cohomology of $M$ in dimension $k$.  The short exact sequence in (\ref{lowerhalfses}) gives an isomorphism between $H^{k}(\widetilde{M};\R)$ and the primitive part of $H^{k}(M;\R)$, and the decomposition of $\beta_{k}(M;\R)$ in (\ref{lowerhalfbettiidentity}) corresponds to the Lefschetz decomposition of $H^{k}(M;\R)$.  Many of the results below can therefore be stated in terms of the primitive cohomology of $M$.  

\end{remark}

The results above lead to the following proposition, which is the basic inequality between the total curvature of a complex projective manifold and its Betti numbers.  Part A of Theorem \ref{cpcl}, and several results later in the paper, are based on this proposition:  

\begin{proposition}
\label{basicestimate}

Let $M$ be a compact complex manifold, of complex dimension $m$ (real dimension $n = 2m$) holomorphically immersed in the complex projective space $\C P^{N}$, and let $\beta_{i}(M;\R)$ be the Betti numbers of $M$ with real coefficients.\\  

Then $\beta_{m-1}(M;\R) + 2\beta_{m}(M;\R) + \beta_{m+1}(M;\R) \leq \mathcal{T}(M)$.    

\end{proposition}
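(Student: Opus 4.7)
The strategy is to transfer the inequality to the sphere via the Hopf fibration, apply the spherical Chern-Lashof bound from Theorem~\ref{spherecl}.A to $\widetilde{M}\subset S^{2N+1}$, and then translate the resulting sum of Betti numbers of $\widetilde{M}$ back to $M$ using Proposition~\ref{bettinumberrelationship} together with Poincar\'e duality.

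First I would invoke Proposition~\ref{tcsame} to replace $\mathcal{T}_{\C P^{N}}(M)$ by $\mathcal{T}_{S^{2N+1}}(\widetilde{M})$, and then apply the spherical Chern-Lashof inequality to the closed, isometrically immersed $(2m+1)$-manifold $\widetilde{M}$, obtaining
$$\sum_{j=0}^{2m+1}\beta_j(\widetilde{M};\R)\ \leq\ \mathcal{T}(\widetilde{M})\ =\ \mathcal{T}(M).$$
Since $\widetilde{M}$ is a closed oriented manifold of odd real dimension $2m+1$, Poincar\'e duality folds this sum in half:
$$\sum_{j=0}^{2m+1}\beta_j(\widetilde{M};\R)\ =\ 2\sum_{j=0}^{m}\beta_j(\widetilde{M};\R).$$

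It then remains to identify $2\sum_{j=0}^{m}\beta_j(\widetilde{M};\R)$ with $\beta_{m-1}(M;\R)+2\beta_m(M;\R)+\beta_{m+1}(M;\R)$. Poincar\'e duality on the closed complex manifold $M^{2m}$ gives $\beta_{m-1}(M;\R)=\beta_{m+1}(M;\R)$, so the target reduces to
$$\beta_{m-1}(M;\R)+\beta_m(M;\R)\ =\ \sum_{j=0}^{m}\beta_j(\widetilde{M};\R).$$
This last identity is pure bookkeeping from Proposition~\ref{bettinumberrelationship}: the formulas
$$\beta_{m-1}(M;\R)=\sum_{i=0}^{\lfloor(m-1)/2\rfloor}\beta_{m-1-2i}(\widetilde{M};\R),\qquad \beta_{m}(M;\R)=\sum_{i=0}^{\lfloor m/2\rfloor}\beta_{m-2i}(\widetilde{M};\R)$$
pick up the odd-indexed and even-indexed terms $\beta_0(\widetilde{M};\R),\dots,\beta_m(\widetilde{M};\R)$ respectively, each exactly once, so their sum is the full partial sum through index $m$.

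The proof is essentially mechanical once the three earlier results are in hand. The only genuine check is the parity/indexing matching in the final interleaving step; this is also what forces the specific combination $\beta_{m-1}+2\beta_m+\beta_{m+1}$ (rather than a longer weighted combination) to be the one that falls out of the argument. Conceptually the middle Betti numbers of $M$ appear with the largest weight because, by the Lefschetz decomposition, the cohomology of $\widetilde{M}$ corresponds to the \emph{primitive} cohomology of $M$, which is concentrated toward the middle dimension; Poincar\'e duality on $\widetilde{M}$ then doubles this primitive contribution.
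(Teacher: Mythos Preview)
Your proposal is correct and follows essentially the same route as the paper: lift to $\widetilde{M}\subset S^{2N+1}$ via the Hopf fibration, invoke Proposition~\ref{tcsame} and the spherical Chern--Lashof bound (Theorem~\ref{spherecl}.A), and then use Proposition~\ref{bettinumberrelationship} to rewrite $\sum_{j=0}^{2m+1}\beta_j(\widetilde{M};\R)$ as $\beta_{m-1}(M;\R)+2\beta_m(M;\R)+\beta_{m+1}(M;\R)$. The paper compresses the bookkeeping into the single displayed equation~(\ref{mainequation}) and relegates the parity splitting to the subsequent Remark; your explicit use of Poincar\'e duality on $\widetilde{M}$ to fold the sum, and on $M$ to identify $\beta_{m-1}=\beta_{m+1}$, is a perfectly good way to organize that same computation (the paper cites hard Lefschetz for the latter equality, which of course gives the same conclusion).
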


\begin{proof} 

Let $\widetilde{\pi} : \widetilde{M} \rightarrow M$ be the $S^{1}$-bundle over $M$ induced the the Hopf fibration as above.  By Theorem \ref{spherecl}.A and Propositions \ref{tcsame} and \ref{bettinumberrelationship}, we have:  

\begin{center}
\begin{equation}
\label{mainequation}
{\small \beta_{m-1}(M;\R) + 2\beta_{m}(M;\R) + \beta_{m+1}(M;\R) = \sum\limits_{j=0}^{n+1}\beta_{j}(\widetilde{M};\R) \leq \mathcal{T}(\widetilde{M}) = \mathcal{T}(M)}.
\end{equation} 
\end{center}
\end{proof}

\begin{remark}  

The calculations for (\ref{mainequation}) are slightly different, depending on whether $M$ has even or odd complex dimension:  \\ 

If $m$ is even, then $\beta_{m}(M;\R) = \sum\limits_{j=0}^{\text{\tiny $\frac{m}{2}$}}\beta_{2j}(\widetilde{M};\R) = \sum\limits_{j=0}^{\text{\tiny $\frac{m}{2}$}}\beta_{\text{\tiny $(n+1-2j)$}}(\widetilde{M};\R)$. \\  

Because $\beta_{0}(\widetilde{M};\R)$ and $\beta_{n+1}(\widetilde{M};\R)$ are both equal to $1$, this implies that $\beta_{m}(M;\R) \geq 1$. \\  

If $m$ is odd, then $\beta_{m-1}(M;\R) = \sum\limits_{j=0}^{\text{\tiny $\frac{m-1}{2}$}}\beta_{2j}(\widetilde{M};\R)$ and $\beta_{m+1}(M;\R) = \sum\limits_{j=0}^{\text{\tiny $\frac{m-1}{2}$}}\beta_{\text{\tiny $(n+1-2j)$}}(\widetilde{M};\R)$. \\  

Because $\beta_{0}(\widetilde{M};\R) = 1$, we have $\beta_{m-1}(M;\R) \geq 1$, and because $\beta_{n+1}(\widetilde{M};\R) = 1$ we have $\beta_{m+1}(M;\R) \geq 1$. \\ 

The powers of the K\"ahler form give $M$ a non-trivial cohomology class in each of its even-dimensional cohomology groups and imply that its even-dimensional Betti numbers are non-zero - these observations can be seen as a reflection of this fact. 

\end{remark} 

By the hard Lefschetz theorem (or Poincar\'e duality), $\beta_{m-1}(M;\R) = \beta_{m+1}(M;\R)$, so we can also state the conclusion of Proposition \ref{basicestimate} as:   

\begin{center}
\begin{equation}
\label{other_basic_estimate}
\beta_{m}(M;\R) + \beta_{m \pm 1}(M;\R) \leq {\small \frac{\mathcal{T}(M)}{2}}.
\end{equation}
\end{center}
\medskip  

The middle-dimensional Betti number of $M$, and the  Betti numbers in the dimensions $m \pm 1$, are the largest in dimensions with their respective parities.  This gives us the following family of results:  

\begin{proposition}  
\label{detailedestimate}

Let $M$ be as in Proposition \ref{basicestimate}.  Then the sum of any even and any odd-dimensional real Betti number of $M$ is bounded above by $\frac{\mathcal{T}(M)}{2}$.  If $\beta_{2k}(M;\R) + \beta_{2l+1}(M;\R) = \frac{\mathcal{T}(M)}{2}$, then all even-dimensional real Betti numbers of $M$ in dimensions $2k$ through $n-2k$ are equal to $\beta_{2k}(M;\R)$, and all odd-dimensional real Betti numbers of $M$ in dimesions $2l+1$ through $n-2l-1$ are equal to $\beta_{2l+1}(M;\R)$.  

\medskip

In particular, all of the even-dimensional real Betti numbers of $M$ are bounded above by $\frac{\mathcal{T}(M)}{2}$ and all of the odd-dimensional real Betti numbers of $M$ are bounded above by $\frac{\mathcal{T}(M)}{2} - 1$.  If equality holds for an even-dimensional Betti number $\beta_{2k}(M;\R)$, then all odd-dimensional real Betti numbers of $M$ are equal to $0$, and if equality holds for an odd-dimensional Betti number $\beta_{2l+1}(M;\R)$, then all even-dimensional real Betti numbers of $M$ are equal to $1$.  

\end{proposition}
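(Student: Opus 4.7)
The plan is to combine Proposition \ref{basicestimate} with the unimodality of the Betti sequence of a K\"ahler manifold, which is encoded in the hard Lefschetz theorem already invoked in the proof of Proposition \ref{bettinumberrelationship}.

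First, I would observe that hard Lefschetz gives $\beta_i(M;\R) \leq \beta_{i+2}(M;\R)$ for $0 \leq i \leq m-1$ (injectivity of $\mathscr{L}$) and $\beta_i(M;\R) \geq \beta_{i+2}(M;\R)$ for $m-1 \leq i \leq n-2$ (surjectivity), together with Poincar\'e duality $\beta_k(M;\R) = \beta_{n-k}(M;\R)$. Restricted to one parity class at a time, these inequalities show that among even-dimensional Betti numbers the largest is $\beta_m(M;\R)$ if $m$ is even and $\beta_{m\pm 1}(M;\R)$ if $m$ is odd, while among odd-dimensional Betti numbers the largest is $\beta_{m\pm 1}(M;\R)$ if $m$ is even and $\beta_m(M;\R)$ if $m$ is odd. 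In either case the sum of the two parity-class maxima equals $\beta_m(M;\R) + \beta_{m\pm 1}(M;\R)$, which is at most $\mathcal{T}(M)/2$ by the reformulation (\ref{other_basic_estimate}) of Proposition \ref{basicestimate}.

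The first assertion follows immediately, since $\beta_{2k}(M;\R) + \beta_{2l+1}(M;\R)$ is bounded above by the sum of the maximal even and maximal odd Betti numbers. For the equality statement, if $\beta_{2k}(M;\R) + \beta_{2l+1}(M;\R) = \mathcal{T}(M)/2$ then each summand must attain its parity-class maximum, so every weak Lefschetz inequality along the chain from $\beta_{2k}(M;\R)$ up to that maximum (and, by Poincar\'e duality, back down to $\beta_{n-2k}(M;\R)$) collapses to an equality, giving $\beta_{2k}(M;\R) = \beta_{2k+2}(M;\R) = \cdots = \beta_{n-2k}(M;\R)$; the same argument applied to the odd chain yields the parallel statement from dimension $2l+1$ to $n-2l-1$.

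For the bound on an individual even Betti number I would specialize the inequality above to $\beta_{2l+1}(M;\R) = 0$ to obtain $\beta_{2k}(M;\R) \leq \mathcal{T}(M)/2$; equality forces the maximal odd Betti number to vanish, and the Lefschetz monotonicity then forces every odd Betti number to vanish. For an individual odd Betti number I would invoke the remark preceding this proposition, namely that the powers of the K\"ahler form make every even-dimensional Betti number at least $1$, so that the maximal even Betti number is at least $1$; combined with $\beta_{\text{max even}} + \beta_{\text{max odd}} \leq \mathcal{T}(M)/2$, this yields $\beta_{2l+1}(M;\R) \leq \mathcal{T}(M)/2 - 1$, with equality pinning the maximal even Betti number at $1$, which collapses the weakly increasing chain $1 = \beta_0(M;\R) \leq \beta_2(M;\R) \leq \cdots$ to the constant value $1$ in every even dimension. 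The only point that needs a little attention is bundling the cases of even and odd $m$ so that one does not have to redo the argument separately for each parity; once the unimodality within each parity class is in hand, this is transparent and no serious obstacle arises.
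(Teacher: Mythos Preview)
Your argument is correct. For the inequality in the first paragraph you are doing exactly what the paper does: the sentence preceding Proposition~\ref{detailedestimate} already records that $\beta_m$ and $\beta_{m\pm 1}$ are the parity-class maxima, and the paper's one-line proof (``the observations above immediately imply\ldots'') is precisely your unimodality-plus-(\ref{other_basic_estimate}) reduction.

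Where you diverge is in the equality analysis. The paper rewrites $\beta_{2k}+\beta_{2l+1}=\mathcal{T}(M)/2$ as $\beta_{2k}+\beta_{n-2k}+\beta_{2l+1}+\beta_{n-2l-1}=\mathcal{T}(M)$, lifts this through Proposition~\ref{bettinumberrelationship} to an identity expressing a partial sum of the Betti numbers of $\widetilde{M}$ as $\mathcal{T}_{S^{2N+1}}(\widetilde{M})$, and then invokes Theorem~\ref{spherecl}.A to force all the remaining $\beta_j(\widetilde{M};\R)$ to vanish; the constancy of the Betti numbers of $M$ in the indicated ranges then drops out of Proposition~\ref{bettinumberrelationship} again. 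You instead stay entirely on $M$: equality squeezes $\beta_{2k}$ and $\beta_{2l+1}$ up to the parity maxima, and the hard Lefschetz chain $\beta_{2k}\le\beta_{2k+2}\le\cdots$ collapses. Your route is shorter and avoids re-invoking the spherical lift, while the paper's route is consistent with its overall philosophy of reading everything off $\widetilde{M}$ and makes the dependence on Theorem~\ref{spherecl}.A (rather than just Proposition~\ref{basicestimate}) explicit. The ``In particular'' clause is handled the same way in both, with your version spelling out the use of $\beta_{\text{even}}\ge 1$ that the paper leaves implicit.
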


\begin{proof}  

The observations above immediately imply that the sum of an even and an odd Betti number of $M$ is bounded above by $\frac{\mathcal{T}(M)}{2}$.  The statement that $\beta_{2k}(M;\R) + \beta_{2l+1}(M;\R) = \frac{\mathcal{T}(M)}{2}$ is equivalent to the statement that {\small $\beta_{2k}(M;\R) + \beta_{2l+1}(M;\R) + \beta_{n-2k}(M;\R) + \beta_{n-2l-1}(M;\R)  = \mathcal{T}(M)$}.  Letting $\widetilde{M}$ be the $S^{1}$-bundle over $M$ as above, Proposition \ref{bettinumberrelationship} gives us the following relationship between the total curvature and Betti numbers of $\widetilde{M}$:  

\begin{equation}
\label{equalitycharacterization}
\sum\limits_{j=0}^{k}\beta_{2j}(\widetilde{M};\R) + \sum\limits_{j=0}^{l}\beta_{2j+1}(\widetilde{M};\R) + \sum\limits_{j=0}^{k}\beta_{n+1-2j}(\widetilde{M};\R) + \sum\limits_{j=0}^{l}\beta_{n-2j}(\widetilde{M};\R) = \mathcal{T}_{S^{2N+1}}(\widetilde{M}).  
\end{equation}

\medskip  

Part A of Theorem \ref{spherecl} implies that the Betti numbers of $\widetilde{M}$ other than those in (\ref{equalitycharacterization}) are zero.  Proposition \ref{bettinumberrelationship} then implies that the Betti numbers of $M$ in the ranges described above are constant.  

\end{proof}

As a corollary of these results, we have the following statement, which gives a parallel to the first Chern-Lashof theorem for complex projective manifolds: \\ 

\textbf{Theorem \ref{cpcl}.A}  {\em Let $M$ be a compact complex manifold, of complex dimension $m$, holomorphically immersed in complex projective space.  Let $\mathcal{T}(M)$ be its total absolute curvature and $\beta_{i}$ its Betti numbers with real coefficients.  Then $\sum\limits_{i=0}^{2m}\beta_{i} \leq (\frac{m+1}{2})\mathcal{T}(M)$.  In particular, $\mathcal{T}(M) \geq 2$.} \\ 

\begin{proof}[Proof of Part A of Theorem \ref{cpcl}]

Each of the terms $\beta_{2i-1} + \beta_{2i}$, for $i = 1, 2, \cdots, m$, is bounded above by $\frac{\mathcal{T}(M)}{2}$.  $\beta_{0} = 1$ is likewise, and this implies the result.  

\end{proof}

Theorem \ref{cpcl}.A is sharp, in that equality holds for linearly embedded complex projective subspaces, but these are the only complex projective manifolds for which this equality holds.  This follows from Part B of Theorem \ref{cpcl}.  It would be more interesting to characterize equality in Proposition \ref{basicestimate}.  In their second paper, Chern and Lashof gave the following characterization of equality in their first theorem:  

\begin{theorem}[Chern-Lashof, \cite{CLII}]

Let $M^{n}$ be a compact manifold immersed in Euclidean space with {\small $\sum\limits_{i=0}^{n} \beta_{i}(M;\R) = \mathcal{T}(M)$}. \\ 

Then the integral homology groups of $M$ are torsion-free.  

\end{theorem}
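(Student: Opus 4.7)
The plan is to use the Morse-theoretic interpretation of total curvature reviewed in Section~\ref{sphereresults}: for almost every $\vec{w}\in S^{N-1}$, the height function $h_{\vec{w}}$ is Morse on $M$, and $\mathcal{T}(M)$ equals the average over $\vec{w}\in S^{N-1}$ of the number of critical points of $h_{\vec{w}}$. If $C_i(\vec{w})$ denotes the count of critical points of $h_{\vec{w}}$ of index $i$, the weak Morse inequalities give $\beta_i(M;F)\leq C_i(\vec{w})$ for any field $F$, and in particular $\sum_i\beta_i(M;\R)\leq \sum_i C_i(\vec{w})$ pointwise. Under the equality hypothesis $\sum_i\beta_i(M;\R)=\mathcal{T}(M)$, averaging forces $\sum_i C_i(\vec{w})=\sum_i\beta_i(M;\R)$ for almost every $\vec{w}$.

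I would then fix any such direction $\vec{w}_0$ at which $h_{\vec{w}_0}$ is Morse and has exactly $\sum_i\beta_i(M;\R)$ critical points. Because $C_i(\vec{w}_0)\geq\beta_i(M;\R)$ for each $i$ and the totals match, we in fact have $C_i(\vec{w}_0)=\beta_i(M;\R)$ for every $i$; the height function $h_{\vec{w}_0}$ is a perfect Morse function with respect to real coefficients. Applying the weak Morse inequalities for an arbitrary coefficient field $F$ to this same function yields $\beta_i(M;F)\leq C_i(\vec{w}_0)=\beta_i(M;\R)$ for every $i$ and every $F$.

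The conclusion then comes from the universal coefficient theorem. Writing $t_p(A)$ for the number of cyclic summands of a finitely generated abelian group $A$ whose order is divisible by $p$, the UCT gives
\[
\beta_i(M;\Z/p\Z)=\beta_i(M;\Q)+t_p(H_i(M;\Z))+t_p(H_{i-1}(M;\Z)),
\]
and $\beta_i(M;\Q)=\beta_i(M;\R)$ since both equal the rank of $H_i(M;\Z)$. Combined with the inequality $\beta_i(M;\Z/p\Z)\leq\beta_i(M;\R)$ established above, this forces $t_p(H_i(M;\Z))+t_p(H_{i-1}(M;\Z))\leq 0$, hence $=0$, for every prime $p$ and every $i$. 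Therefore $H_i(M;\Z)$ has no $p$-torsion for any $p$, so the integral homology of $M$ is torsion-free.

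The main potential obstacle is a careful justification that the set of $\vec{w}$ where $\sum_i C_i(\vec{w})=\sum_i\beta_i(M;\R)$ is nonempty; but this is immediate because the nonnegative integer-valued function $\sum_i C_i(\vec{w})-\sum_i\beta_i(M;\R)$ is zero on average over the sphere and hence vanishes on a set of positive measure. Everything else is a routine application of the Morse inequalities and the universal coefficient theorem; the substantive input is the already-established Morse-theoretic formula for $\mathcal{T}(M)$ from the proofs of Theorems~\ref{cl} and~\ref{spherecl}.
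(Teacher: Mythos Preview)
The paper does not supply its own proof of this theorem; it is quoted as a result of Chern and Lashof from \cite{CLII} and then invoked without argument. So there is no in-paper proof to compare against.

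Your argument is correct and is essentially the classical one. The key steps are all sound: the identification of $\mathcal{T}(M)$ with the average of $\sum_i C_i(\vec{w})$ over $\vec{w}\in S^{N-1}$ is exactly what the paper recalls in Section~\ref{sphereresults}; the pointwise bound $\sum_i\beta_i(M;\R)\le\sum_i C_i(\vec{w})$ together with equality of the average forces equality on a set of positive measure (indeed a.e.), so a perfect-for-$\R$ Morse height function $h_{\vec{w}_0}$ exists; the indexwise equality $C_i(\vec{w}_0)=\beta_i(M;\R)$ follows from $C_i\ge\beta_i$ and matching totals; and the universal coefficient computation $\beta_i(M;\Z/p\Z)=\beta_i(M;\Q)+t_p(H_i)+t_p(H_{i-1})$ then forces all $p$-torsion to vanish. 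One small stylistic remark: you could also finish by appealing directly to the lacunary principle in Morse theory---a Morse function with exactly $\sum_i\beta_i(M;\R)$ critical points yields a CW structure in which every attaching map is null-homotopic over $\Z$, so $H_*(M;\Z)$ is free abelian---but your universal-coefficient route is equally clean and arguably more elementary.
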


By Proposition \ref{cltcequalsspheretc}, the equivalent theorem holds for submanifolds of spheres.  Equality in Proposition \ref{basicestimate} therefore implies that the integral homology groups of the spherical pre-image $\widetilde{M}$ are torsion-free. \\ 

We note that if a complex projective manifold $M$ is a complete intersection, its Betti numbers can be expressed in terms of the degrees of equations generating its ideal.  Proposition \ref{basicestimate} can then be translated into a statement in terms of these degrees.  We also note that the proofs of the first Chern-Lashof theorem and the corresponding result for submanifolds of spheres in Theorem \ref{spherecl}.A actually imply the following stronger result: letting $c(M)$ denote the minimum number of cells in a cell complex homotopy-equivalent to $M$, $c(M) \leq \mathcal{T}(M)$.  Chern and Lashof discuss this in their second paper \cite{CLII}.  Spheres and complex projective spaces can be immersed with the minimum possible total absolute curvature, equal to $2$, in Theorems \ref{spherecl} and \ref{cpcl} - we will discuss this in Section \ref{minimizers}.  $c(S^{n}) = 2$ for all $n$, however $c(\C P^{m}) = m+1$.  This implies that the stronger conclusion above does not extend directly to complex projective manifolds. 


\section{Complex Projective Manifolds with Minimal Total Curvature}
\label{minimizers}

The complex projective manifolds with minimal total absolute curvature are characterized as follows: \\ 

\textbf{Theorem \ref{cpcl}.B} {\em Let $M$ be a compact complex manifold, holomorphically immersed in complex projective space.  If $\mathcal{T}(M) < 4$, then in fact $\mathcal{T}(M) = 2$.  This occurs precisely if $M$ is a linearly embedded complex projective subspace.} \\ 

In Example \ref{fermatconic}, we will see that this result is sharp.  

\begin{proof}[Proof of Part B of Theorem \ref{cpcl}]  

Let $\widetilde{\pi} : \widetilde{M} \rightarrow M$ be the $S^{1}$-bundle over $M$ which is induced by the Hopf fibration, as in Section \ref{bettis}.  By Proposition \ref{tcsame}, $\widetilde{M}$ is immersed in $S^{2N+1}$ with $\mathcal{T}_{S^{2N+1}}(\widetilde{M}) < 4$.  $\widetilde{M}$ has odd dimension $2m+1$, so by Theorem \ref{strongerspherecl2}, $\widetilde{M}$ is homeomorphic to $S^{2m+1}$.  The Gysin sequence for the fibration $\widetilde{\pi}: S^{2m+1} \rightarrow M$ with integral cohomology is therefore as follows:  

\medskip 
\begin{center} 
{ $\cdots \rightarrow H^{i+1}(S^{2m+1};\Z) \xrightarrow[]{\sigma^{*}} H^{i}(M;\Z) \xrightarrow[]{\bullet \smile \chi} H^{i+2}(M;\Z) \xrightarrow[]{\widetilde{\pi}^{*}} H^{i+2}(S^{2m+1};\Z) \rightarrow \cdots $  }
\end{center}
\bigskip

As in the proof of Proposition \ref{bettinumberrelationship}, $\chi$ is the Euler class of the $2$-disk bundle associated to the $S^{1}$-fibration $\widetilde{M} \rightarrow M$.  Because $H^{i}(S^{2m+1};\Z) = 0$ for $i \ne 0, 2m+1$, we have that $\bullet \smile \chi: H^{i}(M;\Z) \rightarrow H^{i+2}(M;\Z)$ is an isomorphism for $i=0$, $1$, $2$, $\cdots$, $2m-2$.  This implies that for $k = 0$, $1$, $2$, $\cdots$, $m$, $H^{2k}(M;\Z)$ is infinite cyclic, generated by $\chi^{k}$. \\ 

In the proof of Proposition \ref{bettinumberrelationship}, we observed that $\chi$ is equal to $[-\frac{1}{\pi} \omega ]$, where $\omega$ is the K\"ahler form of $M$.  This implies that $[-\frac{1}{\pi} \omega ]^{m}$ generates the top-dimensional cohomology $H^{2m}(M;\Z)$ of $M$.  If $M$ is a degree $d$ subvariety of $\C P^{N}$, then $[-\frac{1}{\pi} \omega ]^{m}$ is $d$ times a generator in $H^{2m}(M;\Z)$, because $[\frac{1}{\pi} \omega_{FS}]^{m}$ generates $H^{2m}(\C P^{N};\Z)$.  We have seen that $[-\frac{1}{\pi} \omega ]^{m}$ generates $H^{2m}(M;\Z)$, so $d = 1$ and $M$ is a linearly embedded subspace of $\C P^{N}$. \\  

If $\C P^{m}$  is a linearly embedded subspace of $\C P^{N}$, its pre-image via the Hopf fibration is a totally geodesic $S^{2m+1}$ in $S^{2N+1}$.  By Proposition \ref{tcsame}, $\mathcal{T}_{\C P^{N}}(\C P^{m}) = \mathcal{T}_{S^{2N+1}}(S^{2m+1})$, and by Part C of Theorem \ref{spherecl}, $\mathcal{T}_{S^{2N+1}}(S^{2m+1}) = 2$.  

\end{proof}

\begin{remark} 

The proof of Theorem \ref{cpcl}.B implies that linear subspaces are the only complex projective manifolds $M$ whose pre-images $\widetilde{M}$ via the Hopf fibration are integral homology spheres.  

\end{remark} 

We note that the spherical pre-images $\widetilde{M}$ in the proofs of Parts A and B of Theorem \ref{cpcl} are minimal submanifolds of $S^{2N+1}$.  In light of the results above and in Theorem \ref{spherecl}.C, $\widetilde{M}$ is a totally geodesic submanifold of $S^{2N+1}$ precisely if $M$ is a linear subspace of $\C P^{N}$, in which case $\mathcal{T}(\widetilde{M}) = 2$.  Otherwise, $\widetilde{M}$ has total absolute curvature at least $4$.  In \cite{Si}, Simons proved that all $n$-dimensional closed minimal subvarieties of the round sphere $S^{N}$ have index at least $N-n$, and nullity at least $(n+1)(N-n)$, with equality only for totally geodesic round subspheres.  It would be interesting to know if the apparent similarity between these conclusions - that totally geodesic sub-spheres are isolated within families of minimal submanifolds of the sphere - indicates a stronger connection between these results, or the existence of other results of this type. \\  

We also note that for many $m \geq 2$, it is known that complex projective space is not the only $m$-dimensional compact complex manifold whose real cohomology ring is isomorphic to $\R[\alpha] / [\alpha]^{m+1}$, with $[\alpha]$ a cohomology class in $H^{2}(M;\R)$.  Compact complex manifolds which have the same real Betti numbers as $\C P^{m}$ are known as {\em fake projective spaces}.  In complex dimension $2$, there are known to be fifty fake projective planes, up to homeomorphism, and one hundred up to biholomorphism.  These were classified by work of Prasad and Yeung and Cartwright and Steger in \cite{PY} and \cite{CS}, and all of them can be realized as smooth algebraic surfaces.  Part B of Theorem \ref{cpcl} implies that the total absolute curvature of these spaces, when realized as complex projective manifolds, is at least twice that of complex projective space. 


\section{Total Curvature, the Complex Projective Tube Formula and the Gauss-Bonnet-Chern Theorem}
\label{tfgbc}

We begin this section by proving Proposition \ref{cppc}, which shows that total absolute curvature has the same meaning for a complex projective manifold as Chern and Lashof's invariant for a submanifold of Euclidean space: \\     

\begin{proof}[Proof of Proposition \ref{cppc}]   

We begin by verifying that $\mathcal{T}(M) = \displaystyle \text{\scriptsize $\frac{2}{Vol(\C P^{N})}$} \text{\LARGE $\int\limits_{\text{\scriptsize $\nu^{<\frac{\pi}{2}}M$}}$} \text{\LARGE $\vert$} det(dExp^{\perp}) \text{\LARGE $\vert$} dVol_{\text{\tiny $\nu^{<\frac{\pi}{2}}M$}}$. \\ 

As with submanifolds of spheres in Section \ref{sphereresults}, we can calculate the differential of the normal exponential map of a complex projective manifold $M$ using Jacobi fields, and we can express the result entirely in terms of the second fundamental form: let $\vec{u}$ be a unit normal vector to $M$ at $p$, and let $e_{1}, ... , e_{n}$ be an orthonormal basis of principal vectors of the second fundamental form $A_{\vec{u}}$ with principal curvatures $\kappa_{1}, ... , \kappa_{n}$.  Let $u_{2} = J(\vec{u}), u_{3}, ... , u_{\text{\small $2N-n$}}$ be an orthonormal basis for the orthogonal subspace to $\vec{u}$ in $\nu_{p}M$, with $J$ the complex structure of $\C P^{N}$.  Letting $\gamma_{\vec{u}}$  be the geodesic of $\C P^{N}$ through $\vec{u}$, and $E_{1}, ... , E_{n}$ the parallel vector fields along $\gamma_{\vec{u}}$ with initial conditions $e_{1}, ... , e_{n}$, and $F_{2}, ... , F_{2N-n}$ the parallel vector fields along $\gamma_{\vec{u}}$ with initial conditions $J(u) = u_{2}, u_{3}, ... , u_{2N-n}$, we have: \\ 

\begin{itemize}

\item $(dExp^{\perp})_{r\vec{u}}(e_{i}) = {\huge (} \cos(r) - \kappa_{i}\sin(r) {\huge )} E_{i}(r)$ for $i = 1, ..., n$, corresponding to the Jacobi field $J_{i}(t)$ along $\gamma_{\vec{u}}$ with $J_{i}(0) = e_{i}$ and $J_{i}'(0) = -\kappa_{i} e_{i}$.  

\bigskip 

\item $(dExp^{\perp})_{r\vec{u}}(u_{2}) = (\frac{\sin(2r)}{2r})F_{2}(r) = (\frac{\sin(r)\cos(r)}{r})F_{2}(r)$, corresponding to the Jacobi field $J_{n+2}(t)$ along $\gamma_{\vec{u}}$ with $J_{n+2}(0) = 0$ and $J_{n+2}'(0) = \frac{1}{r}u_{2}$.  Note that because $\vec{u}$ and $u_{2} = J(\vec{u})$ span a holomorphic section with curvature $4$, $J_{n+2}(t)$ behaves like a Jacobi field in a space with constant curvature $4$.  

\bigskip 

\item $(dExp^{\perp})_{r\vec{u}}(u_{j}) = (\frac{\sin r}{r})F_{j}(r)$ for $j = 3, ... , 2N-n$, corresponding to the Jacobi field $J_{n+j}(t)$ along $\gamma_{\vec{u}}$ with $J_{n+j}(0) = 0$ and $J_{n+j}'(0) = u_{j}$.  

\bigskip 

\item $(dExp^{\perp})_{r\vec{u}}(\vec{u}) = \gamma_{\vec{u}}'(r)$, corresponding to the Jacobi field $\gamma_{\vec{u}}'(r)$.  

\end{itemize}

\medskip 

We therefore have:  

\begin{center}

$det(dExp^{\perp})_{r\vec{u}} = \prod\limits_{i=1}^{n}(\cos(r) - \kappa_{i}\sin(r))(\frac{\cos(r)\sin^{(2N-n-1)}(r)}{r^{2N-n-1}})$ 
\medskip 

$ = \text{\em \large det} \left( \cos(r)Id_{T_{p}M} - \sin(r) A_{\vec{u}} \right) (\frac{\cos(r)\sin^{(2N-n-1)}(r)}{r^{2N-n-1}}).$ 

\end{center}

\medskip  

For $r$ less than $\frac{\pi}{2}$, this implies that: \\  

\begin{center} 

$| det(dExp^{\perp}) | = | det \left( \cos(r)Id_{T_{p}M} - \sin(r) A_{\vec{u}} \right) |  (\frac{\sin^{(2N-n-1)}(r)\cos(r)}{r^{2N-n-1}}).$ \\ 

\end{center} 

This implies the first equation in Proposition \ref{cppc}, that $\mathcal{T}(M) =  \displaystyle \text{\footnotesize $\frac{2}{Vol(\C P^{N})}$} \text{\LARGE $\int\limits_{\text{\scriptsize $\nu^{<\frac{\pi}{2}}M$}}$} \text{\LARGE $\vert$} det(dExp^{\perp}) \text{\LARGE $\vert$} dVol_{\text{\tiny $\nu^{<\frac{\pi}{2}}M$}}$. \\  

The verification of the second equation is similar to the verification  of the corresponding fact for submanifolds of spheres, in the proof of Parts A and B of Theorem \ref{spherecl}: we let $\widetilde{\nu}M \rightarrow M$ be the bundle over $M$ whose fibre at $p$ is the linear subspace $\C P^{N-m}$ of $\C P^{N}$ orthogonal to $M$ at $p$.  As in the proof of Theorem \ref{spherecl}.A and B, $Exp^{\perp}: \nu^{<\frac{\pi}{2}}M \rightarrow \C P^{N}$ extends to a map $\widetilde{Exp}^{\perp}: \widetilde{\nu}M \rightarrow \C P^{N}$, and because the regular values of $Exp^{\perp}$ contain those of $\widetilde{Exp}^{\perp}$, in addition to being of full measure, they contain an open, dense subset of $\C P^{N}$.  We let $\C P^{N}_{reg}$ denote the regular values of $Exp^{\perp}: \nu^{<\frac{\pi}{2}}M \rightarrow \C P^{N}$. \\  

As in the proof of Theorem \ref{spherecl}.A and B, if $\vec{v}$ is a regular point of $Exp^{\perp}$, there are neighborhoods $V$ of $\vec{v}$ and $Q$ of $Exp^{\perp}(\vec{v})$ such that $Exp^{\perp}: V \rightarrow Q$ is a diffeomorphism.  As in the proof of Theorem \ref{spherecl}.A and B, this allows us to define a positive measure $d\mu$ on $\nu^{<\frac{\pi}{2}}M$ which is absolutely continuous with respect to $dVol_{\nu^{<\frac{\pi}{2}}M}$ and is the pull-back of the measure on $\C P^{N}$ via $Exp^{\perp}$, by integrating a measurable function $\phi$ against $|det(dExp^{\perp})|dVol_{\nu^{<\frac{\pi}{2}}M}$.  For neighborhoods $V$ and $Q$ as above, we have: \\  

\begin{center}
$\displaystyle \text{\LARGE $\int\limits_{\text{\scriptsize $V$}}$} \phi d\mu = \text{\LARGE $\int\limits_{\text{\scriptsize $V$}}$} \phi |det(dExp^{\perp})|dVol_{\nu^{<\frac{\pi}{2}}M} = \text{\LARGE $\int\limits_{\text{\scriptsize $Q$}}$} \phi \circ (Exp^{\perp})^{-1} dVol_{\C P^{N}}.$ 
\end{center}

\medskip

This implies that $(Exp^{\perp})^{-1}(\C P^{N}_{reg})$ is of full measure with respect to $d\mu$, and this implies that: \\  

\begin{center} 

$\displaystyle \text{\scriptsize $\frac{2}{Vol(\C P^{N})}$} \text{\LARGE $\int\limits_{\text{\scriptsize $\nu^{<\frac{\pi}{2}}M$}}$} \text{\LARGE $\vert$} det(dExp^{\perp}) \text{\LARGE $\vert$} dVol_{\text{\tiny $\nu^{<\frac{\pi}{2}}M$}} = \text{\scriptsize $\frac{2}{Vol(\C P^{N})}$} \text{\LARGE $\int\limits_{\text{\scriptsize $(Exp^{\perp})^{-1}(\C P^{N}_{reg})$}}$} \text{\LARGE $\vert$} det(dExp^{\perp}) \text{\LARGE $\vert$} dVol_{\text{\tiny $\nu^{<\frac{\pi}{2}}M$}}$

\bigskip  

$\displaystyle = \text{\scriptsize $\frac{2}{Vol(\C P^{N})}$} \text{\LARGE $\int\limits_{\text{\scriptsize $\C P^{N}_{reg}$}}$} \sharp(Exp^{\perp})^{-1}(q) dVol_{\text{\tiny $\C P^{N}$}}$ 

\end{center}

\medskip  

Because $\C P^{N}_{reg}$ is of full measure in $\C P^{N}$, this implies the result: \\  

\begin{center}

$\displaystyle \text{\scriptsize $\frac{2}{Vol(\C P^{N})}$} \text{\LARGE $\int\limits_{\text{\scriptsize $\nu^{<\frac{\pi}{2}}M$}}$} \text{\LARGE $\vert$} det(dExp^{\perp}) \text{\LARGE $\vert$} dVol_{\text{\tiny $\nu^{<\frac{\pi}{2}}M$}} = \text{\scriptsize $\frac{2}{Vol(\C P^{N})}$} \text{\LARGE $\int\limits_{\text{\scriptsize $\C P^{N}$}}$} \sharp(Exp^{\perp})^{-1}(q) dVol_{\text{\tiny $\C P^{N}$}}$

\end{center} 

\end{proof}

\begin{remark}
\label{mindifference}

Proposition \ref{cppc} illustrates one of the differences between the characterization of minimal total absolute curvature for complex projective manifolds, in Part B of Theorem \ref{cpcl}, and the corresponding result for submanifolds of spheres in Part C of Theorem \ref{spherecl}: if $S^{n}$ is a totally geodesic subsphere of $S^{N}$, then almost all points $\widetilde{q}$ in the ambient space $S^{N}$ have precisely two pre-images via $Exp^{\perp} : \nu^{<\pi}S^{n} \rightarrow S^{N}$.  On the other hand, if $\C P^{m}$ is a linear subspace of $\C P^{N}$, then almost all $q$ in the ambient $\C P^{N}$ have precisely one pre-image via $Exp^{\perp} : \nu^{<\frac{\pi}{2}} \C P^{m} \rightarrow \C P^{N}$.  Thus, $Exp^{\perp} : \nu^{<\pi}S^{n} \rightarrow S^{N}$ covers $S^{N}$  twice and $Exp^{\perp} : \nu^{<\frac{\pi}{2}} \C P^{m} \rightarrow \C P^{N}$ covers $\C P^{N}$ once. 

\end{remark}

By Proposition \ref{cppc}, the total absolute curvature of a complex projective manifold $M$ results from integrating $|det(dExp^{\perp})|$ over $\nu^{<\frac{\pi}{2}}M$.  For small enough normal vectors - for normal vectors $r \vec{u}$ with $||\vec{u}|| = 1$ and $r$ less than the focal radius of $M$ along $\vec{u}$ - we have $det(dExp^{\perp})_{r \vec{u}}$ positive provided that $\C P^{N}$ and $\nu M$ are oriented consistently, for example by their complex structures.  For a compact manifold $M$ holomorphically embedded in $\C P^{N}$, there is then a positive $r_{0}$ such that, letting $\nu^{< r_{0}}M$ be the normal disk bundle to $M$ of radius $r_{0}$, $Exp^{\perp} : \nu^{< r_{0}}M \rightarrow \C P^{N}$ is an orientation-preserving diffeomorphism onto its image.  In this case, we have: \\  

\begin{center}

$\displaystyle \text{\LARGE $\int\limits_{\text{\scriptsize $\nu^{< r_{0}}M$}}$} \text{\LARGE $\vert$} det(dExp^{\perp}) \text{\LARGE $\vert$} dVol_{\text{\tiny $\nu^{< r_{0}}M$}} = \text{\LARGE $\int\limits_{\text{\scriptsize $\nu^{< r_{0}}M$}}$} det(dExp^{\perp}) dVol_{\text{\tiny $\nu^{< r_{0}}M$}} = Vol \left( Exp^{\perp}(\nu^{< r_{0}}M) \right)$; 

\end{center}

\medskip 

in other words, the integral of the total absolute curvature integrand over the normal disk bundle of radius $r_{0}$ gives the volume of the tube of radius $r_{0}$ about $M$ in $\C P^{N}$.  The complex projectiv generalization of Weyl's tube formula, due independently to Wolf (\cite{Wo}) and Flaherty (\cite{Fl}) and reformulated and extended by Gray and Vanhecke (\cite{GV}), states that in this situation, the tube volume depends only on the intrinsic geometry (and codimension) of $M$, and is given in terms of the curvature tensor of $M$ by a formula similar to that in Weyl's formula. \\ 
	
We note that Calabi's theorem on the rigidity of holomorphic embeddings into complex space forms, in \cite{Ca}, already implies that the tube volume in this situation is intrinsic to $M$ - the author is not aware of a reference to this fact in the literature on tube formulas, and we believe it provides interesting context for the tube formula for complex projective manifolds. \\ 

The specific form of the tube formula for a complex projective manifold is also interesting in light of the relationship between Weyl's tube formula, the Gauss-Bonnet-Chern theorem and the Chern-Lashof theorems described in Section \ref{sphereresults}.  Alfred Gray showed that the tube formula for complex projective manifolds can be formulated in terms of the Chern classes of the manifold in question as follows:  

\begin{theorem}[Gray, \cite{GrII}.  See also \cite{Gr}]
\label{gtf}  

Let $M$ be a compact complex manifold of complex dimension $m$ holomorphically embedded in the complex projective space $\C P^{N}$ and suppose that the tube of radius $r_{0}$ about $M$ is the diffeomorphic image of $\nu^{< r_{0}} M$ by $Exp^{\perp}$.  Let $\xi_{1}, \xi_{2}, \dots, \xi_{m}$ be the Chern roots of $M$, in other words, the (possibly non-homogeneous) differential forms which factor the Chern polynomial of $M$ as follows: \\ 

\begin{center}
$\displaystyle 1 + t \gamma_{1} + t^{2} \gamma_{2} + \cdots + t^{m} \gamma_{m} = \prod\limits_{i=1}^{m} \left( 1 + t\xi_{i} \right)$, 
\end{center}

\bigskip

where $\gamma_{1}, \gamma_{2}, \dots, \gamma_{m}$ are the Chern forms of $M$.  Then: \\  

\begin{center}
$\displaystyle Vol \left( Exp^{\perp}(\nu^{< r_{0}}M) \right) = \frac{1}{N!} \text{\LARGE $\int\limits_{\text{\scriptsize $M$}}$} \prod\limits_{i = 1}^{m} \left( 1 - \text{\scriptsize $\frac{1}{\pi}$} \omega + \xi_{i} \right) \wedge \left( \pi \sin^{2}(r_{0}) + \cos^{2}(r_{0}) \omega \right)^{N}$.  
\end{center} 

\smallskip 

In the formula above, $\omega$ is the K\"ahler form of $M$.  We integrate the degree-$2m$ component of the integrand over $M$ and disregard the other terms.  

\end{theorem}

Since $Exp^{\perp} : \nu^{<r_{0}}M \rightarrow \C P^{N}$ is an orientation-preserving diffeomorphism, we can also formulate Theorem \ref{gtf} as: \\

\begin{center}
\begin{equation}
\label{altgtf}
\displaystyle \text{\LARGE $\int\limits_{\text{\scriptsize $\nu^{< r_{0}}M$}}$} Exp^{\perp*} \left( dVol_{\C P^{N}} \right) = \frac{1}{N!} \text{\LARGE $\int\limits_{\text{\scriptsize $M$}}$} \prod\limits_{i = 1}^{m} \left( 1 - \text{\scriptsize $\frac{1}{\pi}$} \omega + \xi_{i} \right) \wedge \left( \pi \sin^{2}(r_{0}) + \cos^{2}(r_{0}) \omega \right)^{N}. 
\end{equation}
\end{center}

\medskip  

Although the integral in (\ref{altgtf}) is no longer equal to a tube volume for $r_{0}$ greater than the focal radius of $M$, Equation (\ref{altgtf}) is valid for all $r_{0} > 0$.  In particular, when $r_{0}$ is equal to $\frac{\pi}{2}$ (the diameter and injectivity radius of $\C P^{N}$) we have: \\ 

\begin{center}
$\displaystyle \text{\LARGE $\int\limits_{\text{\scriptsize $\nu^{< \frac{\pi}{2}}M$}}$} Exp^{\perp*} \left( dVol_{\C P^{N}} \right) = \frac{\pi^{N}}{N!} \text{\LARGE $\int\limits_{\text{\scriptsize $M$}}$} \prod\limits_{i = 1}^{m} \left( 1 - \text{\scriptsize $\frac{1}{\pi}$} \omega + \xi_{i} \right) $

\bigskip  

\begin{equation}  
\label{cptccf}
\displaystyle = Vol(\C P^{N}) \text{\LARGE $\int\limits_{\text{\scriptsize $M$}}$} \sum\limits_{i = 0}^{m} \left( 1 - \text{\scriptsize $\frac{1}{\pi}$}\omega \right)^{(m - i)} \text{\Large $\wedge \gamma_{i}$}  = Vol(\C P^{N}) \text{\LARGE $\int\limits_{\text{\scriptsize $M$}}$} \sum\limits_{i = 0}^{m} \left(\text{\scriptsize $\frac{-1}{\pi} $}\right)^{(m-i)} \text{\Large $\gamma_{i} \wedge \omega^{\text{\scriptsize $(m-i)$}}$}. 
\end{equation}
\end{center} 

\medskip  

As noted in the proof of Proposition \ref{bettinumberrelationship}, $[\frac{-1}{\pi} \omega]$ is the Chern class of the line bundle pulled back from $\mathcal{O}(-1) \in Pic(\C P^{N})$ by the embedding of $M$ in projective space.  Letting $\widetilde{L}$ denote this bundle and $TM$ the tangent bundle of $M$, $\sum\limits_{i = 0}^{m} \left( \frac{-1}{\pi} \omega \right)^{m - i} \wedge \gamma_{i}$ therefore represents the top-dimensional Chern class of $TM \otimes \widetilde{L}$, and therefore its Euler class.  The integral in (\ref{cptccf}) therefore gives the Euler number of $TM \otimes \widetilde{L}$, which we write $e(TM \otimes \widetilde{L})$.  Noting that $(\ref{altgtf})$ and $(\ref{cptccf})$ hold for immersed as well as embedded $M$, we then have the following parallel to the extrinsic formulation of the Gauss-Bonnet-Chern theorem in Section \ref{sphereresults}:  

\begin{theorem}
\label{cpgbc}
Let $M$ be a compact complex manifold holomorphically immersed in $\C P^{N}$ and $\widetilde{L}$ the line bundle on $M$ pulled back from $\mathcal{O}(-1)$ by the immersion.  Then: \\ 

\begin{center}
$\displaystyle \text{\scriptsize $\frac{1}{Vol(\C P^{N})}$} \text{\LARGE $\int\limits_{\text{\scriptsize $\nu^{< \frac{\pi}{2}}M$}}$} det(dExp^{\perp}) dVol_{\text{\tiny $\nu^{< \frac{\pi}{2}}M$}} = e(TM \otimes \widetilde{L})$. 
\end{center}
\end{theorem}

This implies that the degree of the map $\widetilde{Exp}^{\perp}: \widetilde{\nu}M \rightarrow \C P^{N}$ in the proof of Proposition \ref{cppc} is the Euler number of $TM \otimes \widetilde{L}$, just as the Gauss-Bonnet-Chern theorem implies the degree of the Gauss map of a submanifold of Euclidean space is its Euler characteristic.  



\section{Total Curvature and the Geometry of Complex Projective Hypersurfaces}
\label{hypersurfaces}

We can give a more detailed explanation of the relationship between the total absolute curvature of a complex projective hypersurface and other aspects of its geometry.  The key to these results is the following observation:  

\begin{proposition}
\label{IIrelationship}  

Let $M$ be a complex submanifold of a K\"ahler manifold $P$, and let $J$ denote its complex structure.  Let $\vec{u}$ be a normal vector to $M$ at a point $p$, and let $e$ and $e^{*} = J(e)$ be principal vectors for the second fundamental form $A_{\vec{u}}$, with principal curvatures $\kappa$ and $-\kappa$, as in Proposition \ref{eqop}. \\ 

Then {\small $\frac{e + e^{*}}{\sqrt{2}}$} and {\small $\frac{- e + e^{*}}{\sqrt{2}}$} are principal vectors for $J(\vec{u})$, with principal curvatures $\kappa$ and $-\kappa$ respectively. \\ 

In general, letting $\vec{u}_{\theta} = \cos(\theta)\vec{u} + \sin(\theta)J(\vec{u})$, ({\small $\cos(\frac{\theta}{2})e + \sin(\frac{\theta}{2})e^{*}$}) and ({\small $- \sin(\frac{\theta}{2})e + \cos(\frac{\theta}{2})e^{*}$}) are principal vectors for $\vec{u}_{\theta}$ with principal curvatures $\kappa$ and $-\kappa$ respectively.  

\end{proposition}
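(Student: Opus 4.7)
The plan is to reduce the whole proposition to a single identity relating the shape operators $A_{\vec{u}}$ and $A_{J\vec{u}}$, followed by an elementary $2\times 2$ diagonalization on each $J$-invariant principal plane. The key identity to establish is
\[ A_{J\vec{u}} \;=\; J \circ A_{\vec{u}} \qquad \text{as endomorphisms of } T_{p}M. \]
Its derivation mirrors the proof of Proposition~\ref{eqop}: since $P$ is K\"ahler we have $\nabla^{P}J = 0$, and since $M$ is a complex submanifold $J$ preserves the splitting $T_{p}P = T_{p}M \oplus \nu_{p}M$. Writing $\nabla^{P}_{X}\vec{u} = -A_{\vec{u}}(X) + (\nabla^{P}_{X}\vec{u})^{\perp}$, applying $J$, and separating tangential from normal parts yields the identity above; equivalently, one checks directly that $h(A_{J\vec{u}}(X),Y) = h(JA_{\vec{u}}(X),Y)$ for every tangent $Y$, exactly as in the chain of equalities in Proposition~\ref{eqop}.

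With this identity in hand, the first claim is immediate. Proposition~\ref{eqop} says that on the $J$-invariant plane $\mathrm{span}\{e,e^{*}\}$ the operator $A_{\vec{u}}$ has matrix $\mathrm{diag}(\kappa,-\kappa)$ in the basis $\{e,e^{*}\}$. Then
\[ A_{J\vec{u}}(e) \;=\; J(\kappa e) \;=\; \kappa\, e^{*}, \qquad A_{J\vec{u}}(e^{*}) \;=\; J(-\kappa e^{*}) \;=\; \kappa\, e, \]
so the matrix of $A_{J\vec{u}}$ in $\{e,e^{*}\}$ is $\kappa\bigl(\begin{smallmatrix} 0 & 1 \\ 1 & 0 \end{smallmatrix}\bigr)$, whose unit eigenvectors are $\tfrac{1}{\sqrt{2}}(e+e^{*})$ with eigenvalue $\kappa$ and $\tfrac{1}{\sqrt{2}}(-e+e^{*})$ with eigenvalue $-\kappa$. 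For arbitrary $\theta$, linearity of the second fundamental form in its normal argument gives $A_{\vec{u}_{\theta}} = \cos\theta\, A_{\vec{u}} + \sin\theta\, A_{J\vec{u}}$, whose matrix in $\{e,e^{*}\}$ is
\[ \kappa \begin{pmatrix} \cos\theta & \sin\theta \\ \sin\theta & -\cos\theta \end{pmatrix}. \]
This is $\kappa$ times the reflection across the line making angle $\theta/2$ with the $e$-axis, so its eigenvectors for $\pm\kappa$ are the rotations of $e$ and $e^{*}$ through angle $\theta/2$, namely $\cos(\theta/2)e + \sin(\theta/2)e^{*}$ and $-\sin(\theta/2)e + \cos(\theta/2)e^{*}$, matching the proposition.

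There is no genuine obstacle here: the entire content lies in the single K\"ahler identity $A_{J\vec{u}} = J\circ A_{\vec{u}}$, after which the problem becomes $2\times 2$ linear algebra carried out simultaneously on each $J$-invariant principal plane of $A_{\vec{u}}$. The only care required is with signs and conventions, since the relationship between $A_{\vec{u}}$ and the bilinear form $B_{\vec{u}}$ introduces a sign that must be tracked consistently with the computations already performed in Proposition~\ref{eqop}.
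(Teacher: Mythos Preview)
Your proof is correct and follows essentially the same route as the paper: both arguments compute $A_{J\vec{u}}(e)=\kappa e^{*}$ and $A_{J\vec{u}}(e^{*})=\kappa e$ from the K\"ahler condition and then diagonalize on the plane $\mathrm{span}\{e,e^{*}\}$. Your packaging via the operator identity $A_{J\vec{u}}=J\circ A_{\vec{u}}$ and the reflection-matrix interpretation of $\kappa\bigl(\begin{smallmatrix}\cos\theta&\sin\theta\\ \sin\theta&-\cos\theta\end{smallmatrix}\bigr)$ is a bit cleaner than the paper's direct angle-addition computation; note also that your $-\kappa$ eigenvector $-\sin(\theta/2)e+\cos(\theta/2)e^{*}$ is the correct one (it equals $J$ of the $+\kappa$ eigenvector, as Proposition~\ref{eqop} demands), whereas the statement as printed has $e$ and $e^{*}$ interchanged in that term.
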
  

\begin{proof}

Let $U$ be a normal vector field on a neighborhood of $p$ which extends $\vec{u}$.  Because $P$ is K\"ahler, so that its connection $\nabla^{P}$ commutes with the complex structure $J$, \\ 

\begin{center}
$A_{J(\vec{u})}(e) = - \left( \nabla_{e}^{P}J(U) \right)^{\top} = -J(\nabla_{e}^{P}U${\large $)^{\top}$} $=$ {\large $-J($} $-A_{\vec{u}}(e)$ {\large $)$} $= J(\kappa e) = \kappa e^{*}.$ 
\end{center}

\medskip  

Similarly, $A_{J(\vec{u})}(e^{*}) = -\kappa (-e) = \kappa e$.  This implies that {\small $\frac{e + e^{*}}{\sqrt{2}}$} and {\small $\frac{- e + e^{*}}{\sqrt{2}}$} are principal vectors for $A_{J(\vec{u})}$: \\ 

\begin{center}

$A_{J(\vec{u})}$({\small $\frac{e + e^{*}}{\sqrt{2}}$}) $= \kappa$({\small $\frac{e + e^{*}}{\sqrt{2}}$}), $A_{J(\vec{u})}$({\small $\frac{- e + e^{*}}{\sqrt{2}}$}) $= -\kappa$({\small $\frac{- e + e^{*}}{\sqrt{2}}$}).

\end{center}

\medskip  

More generally, \\ 

\begin{center}

$A_{\vec{u}_{\theta}}$({\small $\cos(\frac{\theta}{2})e + \sin(\frac{\theta}{2})e^{*}$}) $= \cos(\frac{\theta}{2})\kappa$({\small $\cos(\theta)e + \sin(\theta)e^{*}$})$+ \sin(\frac{\theta}{2})\kappa$({\small $\sin(\theta)e - \cos(\theta)e^{*}$})

\bigskip  

$= \kappa(\cos(\theta)\cos(\frac{\theta}{2}) + \sin(\theta)\sin(\frac{\theta}{2}))e + \kappa(\sin(\theta)\cos(\frac{\theta}{2}) - \sin(\frac{\theta}{2})\cos(\theta))e^{*}$

\bigskip

$= \kappa(\cos(\theta - \frac{\theta}{2})e + \sin(\theta - \frac{\theta}{2})e^{*}) = \kappa(\cos(\frac{\theta}{2})e + \sin(\frac{\theta}{2})e^{*}).$

\end{center}
\medskip 

This implies that {\small $\cos(\frac{\theta}{2})e + \sin(\frac{\theta}{2})e^{*}$} is a principal vector for $A_{\vec{u}_{\theta}}$ with principal curvature $\kappa$.  By a similar calculation, or by Proposition $\ref{eqop}$, {\small $- \sin(\frac{\theta}{2})e + \cos(\frac{\theta}{2})e^{*}$} is a principal vector with principal curvature $-\kappa$.  

\end{proof}  

For each complex line in the normal space of a K\"ahler submanifold, this gives us a set of tangent complex lines:  

\begin{definition}[Holomorphic Principal Directions] 
\label{hpd}

{\em Let $M$ be a complex submanifold of a K\"ahler manifold $P$.  Let $\vec{u}$ be a normal vector to $M$ at $p$, and let $e_{1}$, $J(e_{1})$, $e_{2}$, $J(e_{2})$, $\cdots$, $e_{m}$, $J(e_{m})$ be an orthonormal basis of principal vectors for $A_{\vec{u}}$, with principal curvatures $\kappa_{1}$, $-\kappa_{1}$, $\kappa_{2}$, $-\kappa_{2}$, $\cdots$, $\kappa_{m}$, $-\kappa_{m}$ as in Proposition \ref{eqop}.} \\

{\em We will refer to the complex lines $Span_{\C}( e_{i} )$ in the tangent space $T_{p}M $as holomorphic principal directions for the complex line $Span_{\C}(\vec{u})$ in the normal space $\nu_{p}M$.} 
\end{definition}  

Now let $M$ be a complex submanifold of $\C P^{N}$.  We let $\nu^{Proj}M$ denote the projectivized normal bundle of $M$ in $\C P^{N}$ - in other words, the quotient of the unit normal bundle $\nu^{1}M$ by the Hopf action on each fibre.  For a complex projective manifold of complex dimension $m$, the fibres of $\nu^{Proj}M$ are therefore isometric to $\C P^{N-m-1}$.  Proposition \ref{IIrelationship} allows us to express the total absolute curvature of a complex projective manifold as an integral over $\nu^{Proj}M$:   

\begin{proposition} 
\label{int_over_nu_proj}

Let $M$ be a compact complex manifold of complex dimension $m$ holomorphically immersed in $\C P^{N}$, and let $\nu^{Proj}M$ be its projectivized normal bundle.  For each complex line in the normal space to $M$ at a point $p$, as in Definition \ref{hpd}, let $\kappa_{1}^{2}$, $\kappa_{2}^{2}$, $\cdots$, $\kappa_{m}^{2}$ be the squares of the principal curvatures for a set of holomorphic principal directions, and let $\sigma_{i}( \kappa^{2} )$ be the $i^{th}$ symmetric function of the $\kappa_{i}^{2}$, as in (\ref{formulaforcptc}).  Then: 

\begin{center} 

$\mathcal{T}(M) = \frac{4\pi}{Vol(\C P^{N})} \displaystyle \text{\LARGE $\int\limits_{\text{\tiny $\nu^{Proj}M$}} \int\limits_{\text{\tiny $0$}}^{\text{\tiny $\frac{\pi}{2}$}}$} \text{\LARGE $\vert$} \prod\limits_{i=1}^{m} \text{\large $($}\cos^{2}(r) - \kappa_{i}^{2} \sin^{2}(r) \text{\large $)$} \text{\LARGE $\vert$} \cos(r) \sin^{(2N-2m-1)}(r) \text{ dr $dVol_{\text{\tiny $\nu^{Proj}M$}}$}$

\medskip  

$= \frac{4\pi}{Vol(\C P^{N})} \displaystyle \text{\LARGE $\int\limits_{\text{\tiny $\nu^{Proj}M$}} \int\limits_{\text{\tiny $0$}}^{\text{\tiny $\frac{\pi}{2}$}}$} \text{\LARGE $\vert$} \sum\limits_{i=0}^{m}(-1)^{i}\sin^{(2N-2m-1+2i)}(r)\cos^{(2m-2i+1)}(r)\sigma_{i}(\kappa^{2})  \text{\LARGE $\vert$}  \text{ dr $dVol_{\text{\tiny $\nu^{Proj}M$}}$}.$

\end{center}
\end{proposition}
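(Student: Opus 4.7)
The plan is to derive Proposition \ref{int_over_nu_proj} from formula (\ref{formulaforcptc}) by integrating out the $S^1$-fibers of the natural projection $\pi \colon \nu^1 M \to \nu^{Proj}M$, whose fiber over a complex line $\ell \subset \nu_p M$ is the unit circle $\{e^{i\theta}\vec{u}\} \subset \ell$. The starting observation is that the integrand appearing in (\ref{formulaforcptc}) depends on the unit normal $\vec{u}$ only through the symmetric functions $\sigma_i(\kappa^2(\vec{u}))$ of the squares of the principal curvatures of $A_{\vec{u}}$.

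The key input is Proposition \ref{IIrelationship}, applied with $P = \C P^N$. For any fixed $\vec{u}$ it exhibits, for each pair $(e_i, J(e_i))$ of holomorphic principal directions of $A_{\vec{u}}$ with principal curvatures $\pm\kappa_i$, a corresponding pair of principal vectors for $A_{\vec{u}_\theta}$ (where $\vec{u}_\theta = \cos(\theta)\vec{u} + \sin(\theta)J(\vec{u})$) with the same principal curvatures $\pm\kappa_i$. Hence the multiset $\{\kappa_1^2, \ldots, \kappa_m^2\}$ is constant as $\vec{u}$ ranges over the unit circle in $\ell$, so every $\sigma_i(\kappa^2)$, and therefore the entire integrand, descends to a well-defined function on $\nu^{Proj}M$.

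I would then apply Fubini for the submersion $\pi$, endowing $\nu^{Proj}M$ with the quotient Riemannian metric so that the fibers of $\pi$ are round circles of length $2\pi$. Integrating a fiber-constant function over $\nu^1 M$ contributes a factor of $2\pi$, converting $\int_{\nu^1 M}\cdots\, dVol_{\nu^1 M}$ into $2\pi \int_{\nu^{Proj}M}\cdots\, dVol_{\nu^{Proj}M}$. Combined with the prefactor $\tfrac{2}{Vol(\C P^N)}$ from (\ref{formulaforcptc}), this produces the prefactor $\tfrac{4\pi}{Vol(\C P^N)}$ of the proposition and gives the second displayed formula directly. The first displayed formula then follows from the identity
\[
\sum_{i=0}^m (-1)^i \sin^{2N-2m-1+2i}(r)\cos^{2m-2i+1}(r)\,\sigma_i(\kappa^2) = \cos(r)\sin^{2N-2m-1}(r) \prod_{i=1}^m \bigl(\cos^2(r) - \kappa_i^2 \sin^2(r)\bigr),
\]
exactly as the two displays in and just before (\ref{formulaforcptc}) are related.

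The only step requiring real attention is the bookkeeping of the $2\pi$ factor, which reduces to the choice of metric on $\nu^{Proj}M$ making $\pi$ a Riemannian submersion with unit-circle fibers. This is the main (and essentially only) potential obstacle; no Morse theory or passage to $S^{2N+1}$ is needed beyond what has already been used to obtain (\ref{formulaforcptc}).
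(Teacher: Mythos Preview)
Your proposal is correct and matches the paper's approach exactly: the paper does not give a formal proof of this proposition, offering only the one-line remark that the factor $4\pi$ (in place of $2$) arises because the fibres of $\nu^{1}_{p}M$ over $\nu^{Proj}_{p}M$ have length $2\pi$. Your argument spells out precisely the details behind that remark, using Proposition~\ref{IIrelationship} to show the integrand is constant on these circle fibres and then integrating them out via Fubini.
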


The factor $4\pi$ in Proposition \ref{int_over_nu_proj}, instead of $2$ as in Definition \ref{cptc}, is because the fibres of $\nu^{1}_{p}M$ over $\nu^{Proj}_{p}M$ have length $2\pi$.  For a complex projective hypersurface $M$, this allows us to express the total absolute curvature as an integral over $M$ itself:   

\begin{theorem}  
\label{hypatc}

Let $M$ be a compact complex manifold, of complex dimension $m$, holomorphically immersed in $\C P^{m+1}$.  For each $p$ in $M$, let $K_{1}$, $K_{2}$, $\cdots$, $K_{m}$ be the holomorphic sectional curvatures of a family of holomorphic principal directions at $p$.  Let $\sigma_{i}(K)$ represent the $i^{th}$ elementary symmetric function of the $K_{i}$. \\  

Then $K_{i} = 4 - 2\kappa_{i}^{2}$, and the total absolute curvature of $M$ is: \\  

$\mathcal{T}(M) =  \frac{4\pi}{Vol(\C P^{N})} \displaystyle \text{\LARGE $\int\limits_{\text{\tiny $M$}} \int\limits_{\text{\small $0$}}^{\text{\small $\frac{\pi}{2}$}}$} \text{\LARGE $\vert$}  \sum\limits_{i=0}^{m} \left( 1-3\sin^{2}(r) \right)^{(m-i)} \text{\footnotesize $\left( \frac{\sin^{2i}(r)}{2^{i}} \right)$} \sigma_{i}(K) \text{\LARGE $\vert$} \cos(r) \sin^{(2N-2m-1)}(r) \text{ dr $dVol_{M}$}.$

\end{theorem}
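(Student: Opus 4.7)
The plan is to reduce the formula from Proposition \ref{int_over_nu_proj} to an integral over $M$ by exploiting two special features of the codimension-one case. First, since $M$ has complex codimension one, the complex normal space $\nu_p M$ has complex dimension one at each point, so the projectivized normal bundle $\nu^{Proj} M$ is canonically isomorphic to $M$ itself. This immediately collapses the outer integral in Proposition \ref{int_over_nu_proj} to an integral over $M$. Under this identification, each point $p \in M$ has a unique complex normal line, so the holomorphic principal directions $e_1, \ldots, e_m$ with principal curvatures $\kappa_1, \ldots, \kappa_m$ are well-defined at every $p$.

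Next, I will establish the pointwise identity $K_i = 4 - 2\kappa_i^2$ via the Gauss equation. Let $\vec{u}$ be a unit normal at $p$, so that $\{\vec{u}, J(\vec{u})\}$ is an orthonormal basis of $\nu_p M$, and let $e_1, J(e_1), \ldots, e_m, J(e_m)$ be the principal directions of $A_{\vec{u}}$ with principal curvatures $\kappa_1, -\kappa_1, \ldots, \kappa_m, -\kappa_m$ as in Proposition \ref{eqop}. Applying Proposition \ref{IIrelationship} with $\theta = \pi/2$ and solving for the action of $A_{J(\vec{u})}$ on this basis gives $A_{J(\vec{u})}(e_i) = \kappa_i J(e_i)$ and $A_{J(\vec{u})}(J(e_i)) = \kappa_i e_i$. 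Decomposing the normal-valued second fundamental form $II$ in the frame $\{\vec{u}, J(\vec{u})\}$ using $\langle II(X,Y), N \rangle = \langle A_N X, Y \rangle$ then yields
\[
II(e_i, e_i) = \kappa_i \vec{u}, \qquad II(J(e_i), J(e_i)) = -\kappa_i \vec{u}, \qquad II(e_i, J(e_i)) = \kappa_i J(\vec{u}).
\]
Plugging these into the Gauss equation
\[
K^{M}(e_i, J(e_i)) = K^{\C P^{m+1}}(e_i, J(e_i)) + \langle II(e_i, e_i), II(J(e_i), J(e_i)) \rangle - |II(e_i, J(e_i))|^2
\]
and using that the ambient holomorphic sectional curvature is $4$ produces $K_i = 4 - \kappa_i^2 - \kappa_i^2 = 4 - 2\kappa_i^2$.

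The final step is algebraic. Substituting $\kappa_i^2 = (4 - K_i)/2$ into the factor $\cos^2(r) - \kappa_i^2 \sin^2(r)$ appearing in Proposition \ref{int_over_nu_proj} gives
\[
\cos^2(r) - \kappa_i^2 \sin^2(r) = \bigl( 1 - 3 \sin^2(r) \bigr) + \tfrac{K_i}{2} \sin^2(r),
\]
and the symmetric-function identity $\prod_{i=1}^m (a + b \lambda_i) = \sum_{j=0}^m a^{m-j} b^j \sigma_j(\lambda)$ with $a = 1 - 3\sin^2(r)$ and $b = \sin^2(r)/2$ expands $\prod_{i=1}^m (\cos^2(r) - \kappa_i^2 \sin^2(r))$ into the claimed sum. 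Combined with the identification $\nu^{Proj} M \cong M$ and the fact that $2N - 2m - 1 = 1$ in this setting, this recovers the stated formula.

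The main obstacle is the second step, and specifically verifying that $II(e_i, J(e_i)) = \kappa_i J(\vec{u})$ and $II(J(e_i), J(e_i)) = -\kappa_i \vec{u}$. These identities require combining Proposition \ref{IIrelationship} (which describes the principal directions of $A_{J(\vec{u})}$ in terms of those of $A_{\vec{u}}$) with the shape-operator/second-fundamental-form duality, and keeping signs and K\"ahler compatibility straight. The Gauss equation itself is standard, but one must handle carefully the contribution from the $J(\vec{u})$-component of $II$, which vanishes on the diagonal pairs $(e_i, e_i)$ and $(J(e_i), J(e_i))$ but is nontrivial on the mixed pair $(e_i, J(e_i))$; it is precisely this mixed contribution that produces the factor of $2$ in $K_i = 4 - 2\kappa_i^2$.
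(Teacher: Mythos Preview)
Your proposal is correct and follows essentially the same route as the paper: identify $\nu^{Proj}M$ with $M$ in codimension one, invoke the Gauss formula to get $K_i = 4 - 2\kappa_i^2$, and perform the algebraic substitution $\cos^2(r) - \kappa_i^2\sin^2(r) = (1 - 3\sin^2(r)) + \tfrac{K_i}{2}\sin^2(r)$ followed by the symmetric-function expansion. The paper simply asserts the Gauss-formula identity, whereas you have unpacked it by computing $II(e_i,e_i)$, $II(J(e_i),J(e_i))$, and $II(e_i,J(e_i))$ explicitly from Proposition~\ref{IIrelationship}; this additional detail is correct and makes the origin of the factor $2$ in $4 - 2\kappa_i^2$ transparent.
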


\begin{proof}

By the Gauss formula, the holomorphic sectional curvature $K_{i}$ of a holomorphic principal direction $Span_{\C}( e_{i} )$ is equal to $4 - 2\kappa_{i}^{2}$, where $\kappa_{i}, -\kappa_{i}$ are the principal curvatures of the principal vectors $e_{i}, J(e_{i})$.  Substituting this in the expression for $\mathcal{T}(M)$ in Proposition \ref{int_over_nu_proj}, we have: \\  

$\mathcal{T}(M) = \frac{4\pi}{Vol(\C P^{N})} \displaystyle \text{\LARGE $\int\limits_{\text{\tiny $M$}} \int\limits_{\text{\small $0$}}^{\text{\small $\frac{\pi}{2}$}}$} \text{\LARGE $\vert$} \prod\limits_{i=1}^{m} \left( \cos^{2}(r) - \text{\footnotesize $\left( \frac{4 - K_{i}}{2} \right)$} \sin^{2}(r) \right) \text{\LARGE $\vert$} \cos(r) \sin^{(2N-2m-1)}(r)  \text{\em dr $ dVol_{M}$}  $

\bigskip  

$= \frac{4\pi}{Vol(\C P^{N})} \displaystyle \text{\LARGE $\int\limits_{\text{\tiny $M$}} \int\limits_{\text{\small $0$}}^{\text{\small $\frac{\pi}{2}$}}$} \text{\LARGE $\vert$} \prod\limits_{i=1}^{m} \left( 1 - 3\sin^{2}(r) + K_{i} \text{\footnotesize $\left( \frac{\sin^{2}(r)}{2} \right)$} \right) \text{\LARGE $\vert$} \cos(r) \sin^{(2N-2m-1)}(r) \text{\em dr $ dVol_{M}$}  $

\bigskip  

$= \frac{4\pi}{Vol(\C P^{N})} \displaystyle \text{\LARGE $\int\limits_{\text{\tiny $M$}} \int\limits_{\text{\small $0$}}^{\text{\small $\frac{\pi}{2}$}}$} \text{\LARGE $\vert$} \sum\limits_{i=0}^{m} \left( 1-3\sin^{2}(r) \right)^{(m-i)} \text{\footnotesize $\left(\frac{\sin^{2i}(r)}{2^{i}} \right)$} \sigma_{i}(K) \text{\LARGE $\vert$} \cos(r) \sin^{(2N-2m-1)}(r)  \text{\em dr $ dVol_{M}$}.  $

\end{proof}

Calabi proved in \cite{Ca} that if a K\"ahler manifold admits a holomorphic isometric immersion into a complex space form, even locally, then this immersion is essentially unique:  

\begin{theorem}[Calabi, \cite{Ca}]  

Let $(M,h)$ be a K\"ahler manifold of complex dimension $m$, and let $U$ be a neighborhood in $M$ which admits a holomorphic isometric immersion into a complex space form $F(N,\lambda)$, of complex dimension $N$ and holomorphic sectional curvature $4\lambda$.  Suppose that the image of this immersion does not lie in any proper linear subspace of $F(N, \lambda)$. \\ 

Then the dimension $N$ of the ambient space is uniquely determined by the holomorphic sectional curvature $4\lambda$ and the metric $h$ on $U$, and the immersion is uniquely determined up to a holomorphic isometry of $F(N,\lambda)$.  

\end{theorem}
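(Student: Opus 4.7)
The plan is to reduce the rigidity statement to a linear-algebraic fact about squared-modulus decompositions, using Calabi's diastasis function as the intrinsic bridge between the abstract K\"ahler data $(M,h)$ and any concrete immersion into $F(N,\lambda)$. First, I would recall that on any real-analytic K\"ahler manifold there is, near every point $p$, a canonical local K\"ahler potential $D_p$, the \emph{diastasis}, singled out among all potentials $\Phi$ with $\omega_h = i\partial\bar\partial \Phi$ by the requirement that its Taylor expansion in any holomorphic coordinate system centered at $p$ contain no purely holomorphic and no purely anti-holomorphic monomials. This property is coordinate-independent, which makes $D_p$ intrinsic to $h$, and it is functorial under holomorphic isometric immersions: if $f: (U,h) \to (F(N,\lambda), g_\lambda)$ is such an immersion with $f(p) = p_0$, then $D_p = f^* D_{p_0}$.

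Next, I would compute the diastasis of $F(N,\lambda)$ explicitly in adapted holomorphic coordinates $(z_1,\ldots,z_N)$ centered at a base point $p_0$. One obtains a closed form such as $D_{p_0}(z) = \tfrac{1}{\lambda}\log(1+\lambda|z|^2)$ for $\lambda>0$, and analogous expressions for $\lambda = 0$ and $\lambda < 0$. The salient consequence in all three cases is a universal identity of the shape
\begin{equation*}
\Psi_\lambda(D_{p_0}) \;=\; \epsilon\sum_{i=1}^{N} |z_i|^2,
\end{equation*}
where $\Psi_\lambda$ is an explicit real-analytic function (e.g. $\Psi_\lambda(t) = (e^{\lambda t}-1)/\lambda$ when $\lambda>0$) and $\epsilon = \pm 1$. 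Pulling this back along $f$ and writing $f_i = z_i \circ f$, I obtain the relation $\Psi_\lambda(D_p) = \epsilon\sum_{i=1}^{N}|f_i|^2$ on a neighborhood of $p$, in which the left-hand side is determined purely by $(h,\lambda)$.

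The rigidity is then extracted from this identity as follows. I expand $\Psi_\lambda(D_p)$ as a convergent double power series in the holomorphic coordinates $q$ and $\bar q$ on $U$; the array of coefficients of $q^\alpha \bar q^\beta$ forms an infinite Hermitian matrix $\mathcal{H}(h,\lambda)$ intrinsically determined by $h$ and $\lambda$. Any decomposition $\Psi_\lambda(D_p) = \epsilon \sum |f_i|^2$ with holomorphic $f_i$ corresponds to a factorization $\mathcal{H} = \epsilon\, V V^*$, where $V$ has columns given by the Taylor coefficients of the $f_i$. The minimal number of holomorphic functions required is therefore the rank of $\mathcal{H}$, which depends only on $h$ and $\lambda$; this pins down the ambient dimension $N$. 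Moreover, provided the image of the immersion is not contained in any proper linear subvariety, the $f_i$ are linearly independent, so two such minimal factorizations differ by a transformation $V \mapsto VU$ with $U$ unitary.

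The main obstacle will be translating this unitary ambiguity into an honest holomorphic isometry of $F(N,\lambda)$. I would verify, working space form by space form, that a unitary change of basis among the coordinate functions $z_i$ on $F(N,\lambda)$ extends uniquely to a global holomorphic isometry fixing $p_0$; this uses the explicit description of the isotropy group of a point in the full isometry group ($PU(N+1)$, $U(N) \ltimes \C^N$, or $PU(N,1)$). Combining this with the transitivity of the isometry group on points of $F(N,\lambda)$, to reduce to the case $f(p)=f'(p)=p_0$, will then promote the local coordinate-level equivalence into the desired global holomorphic isometry relating any two immersions $f, f': U \to F(N,\lambda)$.
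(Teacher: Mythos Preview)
The paper does not supply its own proof of this theorem: it is stated with attribution to Calabi \cite{Ca} and immediately followed by the corollary that the author actually needs, namely that all holomorphic isometric immersions of a fixed compact K\"ahler manifold into complex projective space have the same total absolute curvature. So there is nothing in the paper to compare your argument against.

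That said, your sketch is a faithful outline of Calabi's original proof in \cite{Ca}. The diastasis function, its functoriality under holomorphic isometric immersions, the explicit computation for the model space forms, the reduction to a squared-modulus decomposition $\Psi_\lambda(D_p) = \epsilon\sum|f_i|^2$, and the identification of the ambient dimension with the rank of the associated Hermitian coefficient matrix are exactly the ingredients Calabi uses. The only point where your outline is slightly imprecise is the sign $\epsilon$: in the hyperbolic case ($\lambda<0$) the diastasis expansion involves terms of both signs, and Calabi's \emph{resolution of rank} accounts for this by tracking a pair of integers rather than a single rank. For the application in this paper only the case $\lambda>0$ matters, and there your description is accurate as written.
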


\begin{corollary} All holomorphic isometric immersions of a K\"ahler manifold (with a fixed metric) into complex projective space have the same total absolute curvature.  

\end{corollary}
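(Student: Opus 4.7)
The plan is to deduce the corollary from Calabi's theorem via a two-step reduction. Given a holomorphic isometric immersion $\phi \colon M \to \C P^{N'}$, I would first replace it by its corestriction to the smallest linear subvariety $V \subseteq \C P^{N'}$ containing $\phi(M)$. Since $V$ is itself a totally geodesic complex projective subspace of some dimension $N$, this produces a holomorphic isometric immersion $\tilde{\phi} \colon M \to V \cong \C P^{N}$ whose image lies in no proper linear subspace of $V$. The remark following Definition \ref{cptc} then gives $\mathcal{T}(\phi) = \mathcal{T}(\tilde{\phi})$, since the normalization by $\mathrm{Vol}(\C P^{N'})$ was arranged precisely to make the total absolute curvature invariant under totally geodesic inclusions of ambient spaces.

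Now, given two holomorphic isometric immersions $\phi_1 \colon M \to \C P^{N_1}$ and $\phi_2 \colon M \to \C P^{N_2}$, I would apply the preceding reduction to each to obtain $\tilde{\phi}_i \colon M \to V_i$ whose images span $V_i$. Calabi's theorem then forces $\dim_{\C} V_1 = \dim_{\C} V_2 = N$, with $N$ determined intrinsically by the metric on $M$, and provides a holomorphic isometry $\Psi \colon V_1 \to V_2$ with $\tilde{\phi}_2 = \Psi \circ \tilde{\phi}_1$. All that remains is to verify that $\mathcal{T}$ is preserved by holomorphic isometries of the ambient complex projective space.

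For this last step, the point is that $\Psi$ preserves the Fubini--Study metric and complex structure, hence the Levi--Civita connection and the orthogonal decomposition $T(\C P^{N}) = T\tilde{\phi}_i(M) \oplus \nu \tilde{\phi}_i(M)$ along the submanifold. Consequently $d\Psi$ carries the principal directions and principal curvatures of $\tilde{\phi}_1$ to those of $\tilde{\phi}_2$ and induces a volume-preserving diffeomorphism between their normal disk bundles of radius $\tfrac{\pi}{2}$. Every factor entering the integrand of Definition \ref{cptc} --- the determinant involving $\cos(r)\,\mathrm{Id} - (\sin r / r) A_{\vec{v}}$, the norm $r$, and the Jacobi-field weights --- depends only on this data, so the integrals agree, yielding $\mathcal{T}(\tilde{\phi}_1) = \mathcal{T}(\tilde{\phi}_2)$ and hence $\mathcal{T}(\phi_1) = \mathcal{T}(\phi_2)$.

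The argument rests almost entirely on Calabi's rigidity and is otherwise bookkeeping; I do not anticipate a serious obstacle. The one subtle point to check is that the corestriction through the minimal linear subvariety $V$ is actually a holomorphic isometric immersion into $V$ with its Fubini--Study metric, but this is immediate because linear subvarieties of $\C P^{N'}$ are totally geodesic K\"ahler submanifolds, so the metric induced on $V$ from $\C P^{N'}$ coincides with the Fubini--Study metric of $\C P^{N}$.
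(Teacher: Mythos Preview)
Your proposal is correct and follows essentially the same approach the paper intends: the paper states the corollary immediately after Calabi's theorem and treats it as a direct consequence (indeed, the introduction already remarks that ``any two holomorphic isometric immersions of this manifold into complex projective space are congruent by a holomorphic isometry of the ambient space,'' from which intrinsic-ness of $\mathcal{T}$ follows). Your write-up is more careful than the paper's, making explicit the reduction to a full immersion via the normalization remark after Definition~\ref{cptc} and the invariance of the integrand under ambient holomorphic isometries, but the underlying idea is identical.
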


In general, Theorem \ref{hypatc} does not give a completely intrinsic representation of $\mathcal{T}(M)$ because it does not characterize the holomorphic principal directions of $M$ in $\C P^{N}$ intrinsically.  However it follows from Calabi's theorem that the total absolute curvature of a complex projective manifold is actually part of its intrinsic geometry, and for curves $\Sigma$ in $\C P^{2}$, Theorem \ref{hypatc} does give a completely intrinsic representation of $\mathcal{T}(\Sigma)$: \\  

\textbf{Theorem \ref{curveformula}} {\em Let $\Sigma$ be a smooth curve in $\C P^{2}$, with $K$ the sectional curvature of its projectively induced metric.  Then: } \\ 

\begin{center}
$\mathcal{T}(\Sigma) = \displaystyle \frac{1}{\pi} \text{\LARGE $\int\limits_{\text{\small $\Sigma$}}$} \frac{(K-4)^{2} + 4}{6 - K} dA_{\Sigma}.$  
\end{center}

Recall that the sectional curvature of $\Sigma$ is bounded above by $4$, away from the value $6$ at which the integrand would be undefined. 

\begin{proof}[Proof of Theorem \ref{curveformula}]  

By Proposition \ref{hypatc}, the pointwise contribution to $\mathcal{T}(\Sigma)$ from a point of $\Sigma$ with curvature $K$ is:  

\begin{center}
\begin{equation}
\label{curvcalceqn}
\displaystyle \text{\LARGE $\int\limits_{0}^{\frac{\pi}{2}}$} \text{\Large $\vert$} \text{\small $1 + \left( \frac{K}{2} - 3 \right) \sin^{2}(r)$ } \text{\Large $\vert$} \cos(r) \sin(r) dr. 
\end{equation}
\end{center}
\medskip

When $r = 0$, $1 + (\frac{K}{2} - 3)\sin^{2}(r) = 1$.  When $r = \frac{\pi}{2}$, $1 + (\frac{K}{2} - 3)\sin^{2}(r) = \frac{K}{2} - 2$, which is less than or equal to zero for $K \leq 4$, with equality precisely when $K = 4$.  In general, $1 + (\frac{K}{2} - 3)\sin^{2}(r) = 0$ precisely when $\sin^{2}(r) = \frac{2}{6 - K}$, so when $r = \arcsin \scriptstyle (\sqrt{\frac{2}{6 - K}})$.  For $K \in (-\infty,4]$, $\scriptstyle \sqrt{\frac{2}{6 - K}}$ takes values in $(0,1]$, so $\arcsin \scriptstyle (\sqrt{\frac{2}{6 - K}})$ has a well-defined value in $(0, \frac{\pi}{2}]$.  Let $\alpha_{K}$ denote this value of $\arcsin \scriptstyle (\sqrt{\frac{2}{6 - K}})$.  We then evaluate the integral in (\ref{curvcalceqn}) over the intervals $[0, \alpha_{K}]$ and $[\alpha_{K}, \frac{\pi}{2}]$.  The resulting pointwise value for the total absolute curvature is:  \\  

\begin{center}

$\displaystyle \text{\LARGE $\int\limits_{0}^{\alpha_{K}}$} \left( 1 + \text{\footnotesize $\left(\frac{K}{2} - 3 \right)$} \sin^{2}(r) \right) \cos(r) \sin(r) dr - \text{\LARGE $\int\limits_{\alpha_{K}}^{\frac{\pi}{2}}$} \left( 1 + \text{\footnotesize $\left(\frac{K}{2} - 3 \right)$} \sin^{2}(r) \right) \cos(r) \sin(r) dr$

\bigskip  

$\displaystyle = (\frac{1}{2}) \left( \frac{K^{2} - 8K + 20}{24-4K} \right) = (\frac{1}{8}) \left( \frac{(K - 4)^{2} + 4}{6 - K} \right).$

\end{center}
\bigskip  

Substituting this in the expression for $\mathcal{T}(\Sigma)$ in Theorem \ref{hypatc}, together with the fact that $Vol(\C P^{2}) = \frac{\pi^{2}}{2}$, gives the result.

\end{proof}  

\begin{example}
\label{fermatconic}

The conic $\mathcal{F}$ described by $z_{0}^{2} + z_{1}^{2} + z_{2}^{2} = 0$ in $\C P^{2}$ is isometric to a round $2$-sphere with curvature $2$.  As a degree $2$ curve, $Area(\mathcal{F}) = 2 \times Area(\C P^{1}) = 2\pi$.  (More generally, as explained in the discussion after Proposition \ref{eqop}, Wirtinger's inequality implies that a degree $d$ curve in $\C P^{2}$ has area $d \times Area(\C P^{1}) = d\pi$.)  By Theorem \ref{curveformula}, we then have: 

\medskip
\begin{center}
$\mathcal{T}(\mathcal{F}) = \displaystyle \frac{1}{\pi} \text{\LARGE $\int\limits_{\text{\small $\mathcal{F}$}}$} \frac{(2 - 4)^{2} + 4}{6 - 2} dA_{\Sigma} = \frac{1}{\pi} \times 2\pi \times 2 = 4.$  
\end{center}
\medskip

This shows that Part B of Theorem \ref{cpcl} is optimal.    

\end{example}

More generally, for smooth degree $d$ curves in $\C P^{2}$, the following holds:   

\begin{proposition}
\label{curveestimate} 

Let $\Sigma_{d}$ be a smooth curve of degree $d$ in $\C P^{2}$. Then: 

\smallskip  

\begin{equation}
\label{curveesteqn}
2d^{2} - 4d + 4 \leq \mathcal{T}(\Sigma_{d}) \leq 2d^{2} \text{.}
\end{equation}

\end{proposition}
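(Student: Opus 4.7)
The plan is to reduce the claim to a single auxiliary integral by performing polynomial long division on the integrand of Theorem~\ref{curveformula}, then to estimate that integral from both sides using only Gauss--Bonnet, Wirtinger's inequality (Remark~\ref{minimality}), the genus formula for a smooth plane curve, the curvature bound $K \leq 4$, and Jensen's inequality.

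First I would observe the identity
\begin{equation*}
\frac{(K-4)^{2}+4}{6-K} \;=\; \frac{K^{2}-8K+20}{6-K} \;=\; (2-K) \;+\; \frac{8}{6-K},
\end{equation*}
which splits the integrand in Theorem~\ref{curveformula} into two terms that can be handled separately. The first term is controlled by the total area and the total Gauss curvature of $\Sigma_{d}$. By Wirtinger's inequality, $\mathrm{Area}(\Sigma_{d}) = d\pi$, and by Gauss--Bonnet together with the genus formula $g=\frac{(d-1)(d-2)}{2}$ for a smooth curve of degree $d$ in $\C P^{2}$, we have $\int_{\Sigma_{d}} K\, dA = 2\pi\chi(\Sigma_{d}) = 2\pi(3d - d^{2})$. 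Substituting these into Theorem~\ref{curveformula} yields
\begin{equation*}
\mathcal{T}(\Sigma_{d}) \;=\; 2d^{2}-4d \;+\; \frac{8}{\pi}\int_{\Sigma_{d}} \frac{dA}{6-K},
\end{equation*}
so the proposition reduces to the two-sided estimate $\tfrac{\pi}{2} \leq \int_{\Sigma_{d}} \tfrac{dA}{6-K} \leq \tfrac{d\pi}{2}$.

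For the upper bound I would use the fact, noted after Theorem~\ref{curveformula}, that the sectional curvature of $\Sigma_{d}$ in its projectively induced metric satisfies $K \leq 4$. This gives $6-K \geq 2$ pointwise, so $\int_{\Sigma_{d}} \tfrac{dA}{6-K} \leq \tfrac{1}{2}\mathrm{Area}(\Sigma_{d}) = \tfrac{d\pi}{2}$, which translates to $\mathcal{T}(\Sigma_{d}) \leq 2d^{2}$. For the lower bound I would apply Jensen's inequality to the function $K \mapsto \tfrac{1}{6-K}$, which is convex on $(-\infty, 6)$, with respect to the probability measure $\tfrac{1}{d\pi}dA_{\Sigma_{d}}$:
\begin{equation*}
\frac{1}{d\pi}\int_{\Sigma_{d}} \frac{dA}{6-K} \;\geq\; \frac{1}{6 - \overline{K}}, \qquad \overline{K} \;=\; \frac{1}{d\pi}\int_{\Sigma_{d}} K\, dA \;=\; 6-2d.
\end{equation*}
This yields $\int_{\Sigma_{d}} \tfrac{dA}{6-K} \geq \tfrac{d\pi}{2d} = \tfrac{\pi}{2}$, and hence $\mathcal{T}(\Sigma_{d}) \geq 2d^{2}-4d+4$.

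I do not expect a serious obstacle: once the algebraic decomposition of the integrand is in hand, everything else is standard. The step that took the most thought to find is the polynomial division identity, since the two constants $2$ and $8$ that appear there are exactly what is needed to line up the Gauss--Bonnet contribution $-4d$ and the Jensen/curvature-bound contribution $+4$ (respectively $+4d$) with the stated endpoints $2d^{2}-4d+4$ and $2d^{2}$. It is worth noting that the upper bound is attained precisely when $K \equiv 4$, which is impossible for $d \geq 2$, while the lower bound is attained in Jensen when $K$ is constant, which (combined with $\overline{K} = 6-2d$) gives the Fermat-type picture of Example~\ref{fermatconic} for $d=2$ and shows the estimate in \eqref{curveesteqn} is sharp at $d=1,2$.
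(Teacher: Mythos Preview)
Your argument is correct, and the overall strategy---Gauss--Bonnet plus the area formula for the exact terms, Jensen's inequality for the lower bound, and the pointwise bound $K\le 4$ for the upper bound---is the same as the paper's. The organizational difference is that the paper works directly with the full integrand $\frac{(K-4)^{2}+4}{6-K}$: for the lower bound it applies Jensen to that function itself (after asserting its convexity on $(-\infty,4]$), and for the upper bound it observes that $\frac{(K-4)^{2}+4}{6-K}\le 6-K$ for $K\le 4$ and then integrates the linear majorant using Gauss--Bonnet. Your polynomial division $\frac{(K-4)^{2}+4}{6-K}=(2-K)+\frac{8}{6-K}$ first peels off the part that integrates exactly, so that Jensen is applied only to the visibly convex function $K\mapsto\frac{1}{6-K}$ and the upper bound reduces to the obvious inequality $\frac{1}{6-K}\le\frac{1}{2}$. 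The two routes are equivalent (indeed the paper's inequality $\frac{(K-4)^{2}+4}{6-K}\le 6-K$ is algebraically the same as $K\le 4$), but yours has the small advantage that no separate convexity check of the full rational function is needed.
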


\smallskip

\begin{proof}  

$\Sigma_{d}$ is a compact Riemann surface of genus $\frac{(d-1)(d-2)}{2}$.  As explained in Example \ref{fermatconic}, its area is $d\pi$.  Let $K_{d}$ be the average sectional curvature of $\Sigma_{d}$.  By the Gauss-Bonnet formula, \\   

\begin{center}
$d\pi K_{d} = 2\pi \chi(\Sigma_{d}) = 2\pi(2 - (d-1)(d-2)) = 2\pi d(3 - d).$
\end{center}
\medskip  

Therefore, $K_{d} = 2(3 - d)$.  Because the function $\frac{(K - 4)^{2} + 4}{6-K}$ in Theorem \ref{curveformula} is convex on $(-\infty,4]$, Jensen's inequality then implies that: 

\begin{center}
$\displaystyle Area(\Sigma_{d}) \left( \frac{(K_{d} - 4)^{2}  + 4}{6-K_{d}} \right) \leq \text{\LARGE $\int\limits_{\text{\small $\Sigma_{d}$}}$} \frac{(K - 4)^{2} + 4}{6-K} dA_{\Sigma_{d}} = \pi \times \mathcal{T}(\Sigma_{d}).$

\end{center} 

\medskip  

This implies that $2d^{2} - 4d + 4 \leq \mathcal{T}(\Sigma_{d})$. \\ 

On the other hand, $\frac{(K - 4)^{2} + 4}{6-K}$ is asymptotic, as $K \rightarrow -\infty$, to a line with slope $-1$.  It is straight-forward to check that the linear function $f(K) = 6 - K$ gives an upper bound for $\frac{(K - 4)^{2} + 4}{6-K}$ for all $K \leq 4$, with equality when $K = 4$, and that this is the best linear upper bound possible, in that a line with greater slope or smaller intercept will no longer give an upper bound for $\frac{(K - 4)^{2} + 4}{6-K}$ for all $K \in (-\infty, 4]$.  We then have: \\

\begin{center}

$\displaystyle \mathcal{T}(\Sigma_{d}) = \frac{1}{\pi} \text{\LARGE $\int\limits_{\text{\small $\Sigma_{d}$}}$} \frac{(K - 4)^{2} + 4}{6-K} dA_{\Sigma} \leq \frac{1}{\pi} \text{\LARGE $\int\limits_{\text{\small $\Sigma_{d}$}}$} (6 - K) dA_{\Sigma} = \frac{1}{\pi} \left( 6 Area(\Sigma_{d}) - 2\pi \chi(\Sigma_{d}) \right) = 2d^{2}.$

\end{center} 

\end{proof}  

Hulin proved in \cite{Hu} that if a K\"ahler-Einstein metric on a compact complex manifold is induced by a holomorphic embedding into complex projective space, then the metric has positive scalar curvature.  In particular, the only constant-curvature metrics on plane algebraic curves, with $K \equiv K_{d}$, occur on curves of degree $1$, and on curves of degree $2$ which are isometric (and thus congruent, by Calabi's theorem) to the curve in Example \ref{fermatconic}.  Other than for these curves, the lower bound for $\mathcal{T}(\Sigma_{d})$ in Proposition \ref{curveestimate} is strict.  The upper bound in Proposition \ref{curveestimate} is strict except for degree $1$ curves. \\  

A smooth degree $d$ curve in $\C P^{2}$ whose curvature is nearly constant will have total absolute curvature close to the lower bound $2d^{2} - 4d + 4$ in Proposition \ref{curveestimate}.  The equivalent statement for the upper bound $2d^{2}$ seems harder to formulate.  For example, it is shown by Vitter in \cite{Vi} that the degree $d$ Fermat curve $z_{0}^{d} + z_{1}^{d} + z_{2}^{d} = 0$, for $d \geq 3$, has $3d$ points at which its sectional curvature is maximal, equal to $4$.  This shows that for curves of arbitrarily high degree, one cannot replace the linear upper bound $6 - K$ in the proof of Proposition \ref{curveestimate} by a stronger upper bound, even though for any $\epsilon > 0$, a better linear upper bound is available for $K$ confined to the interval $(-\infty, 4 - \epsilon]$. \\ 

For a smooth curve in $\C P^{2}$, its degree determines its topology - in particular, its Betti numbers.  Proposition \ref{basicestimate} can then be translated into an inequality between the total curvature of a smooth plane curve and its degree: \\

\begin{corollary}[of Proposition \ref{basicestimate}]  Let $\Sigma_{d}$ be a smooth curve of degree $d$ curve in $\C P^{2}$.  Then: 
	\medskip 
	\begin{center}
		$2d^{2}-6d+6 \leq \mathcal{T}(\Sigma_{d})$. 
	\end{center}
\end{corollary}

We now have two results which give a lower bound for the total absolute curvature of a smooth plane curve in terms of its degree - the corollary of Proposition \ref{basicestimate} above and Proposition \ref{curveestimate}.  When $d = 1$, both results give the same estimate, that $\mathcal{T}(\C P^{1}) \geq 2$.   We know that in fact $\mathcal{T}(\C P^{1}) = 2$.  When $d = 2$, Proposition \ref{curveestimate} implies that $\mathcal{T}(\Sigma_{2}) \geq 4$, which also follows from Part B of Theorem \ref{cpcl}.  Proposition \ref{basicestimate} gives a weaker result, that $\mathcal{T}(\Sigma_{2}) \geq 2$.  Proposition \ref{curveestimate} also implies that $\mathcal{T}(\Sigma_{2}) < 8$.  When $d = 3$, Proposition \ref{basicestimate} implies that that $\mathcal{T}(\Sigma_{3}) \geq 6$.  Proposition \ref{curveestimate} implies that $10 < \mathcal{T}(\Sigma_{3}) < 18$. \\

For all $d \geq 3$, the lower bound in Proposition \ref{curveestimate} is stronger than the result in Proposition \ref{basicestimate}.  However, the upper and lower bounds in Proposition \ref{curveestimate} together show that Proposition \ref{basicestimate}, based ultimately on the original Chern-Lashof theorems, gives the right order of growth in the degree $d$ for the optimal lower bound for $\mathcal{T}(\Sigma_{d})$.  Proposition \ref{curveestimate} also shows that for all $d \geq 3$, the total absolute curvatures of degree $d$ curves are contained in an interval of length $4(d - 1)$, between $2d^{2} - 4d + 4$ and $2d^{2}$.  We note that the lower bound for degree $d+1$ curves, $2(d+1)^{2} - 4(d+1) + 4$, exceeds the upper bound for degree $d$ curves, $2d^{2}$, by $2$.  This implies that the total absolute curvature of a smooth curve in $\C P^{2}$ determines its degree.  We record this in the following: \\ 

\textbf{Proposition \ref{atcdeterminesdegree}} {\em Let $\Sigma$ be a smooth curve in $\C P^{2}$.  Then the degree of $\Sigma$ is the unique natural number $d$ such that $2d^{2} - 4d + 4 \leq \mathcal{T}(\Sigma) \leq 2d^{2}$.} \\  

By the Gauss-Bonnet formula, the (non-absolute) total curvature of a smooth plane curve determines its topology, and hence its degree for curves of degrees $3$ and greater.  However, the Gauss-Bonnet integral is the same for curves of degrees $1$ and $2$.  Moreover, the Gauss-Bonnet integral is the same for all smooth curves of a fixed topological type - it cannot distinguish between non-isomorphic curves with the same topology, or between geometrically distinct embeddings of isomorphic curves.  Total absolute curvature can distinguish these things in some cases - for example, smooth conics which are not congruent to the curve in Example \ref{fermatconic} will have total absolute curvature greater than $4$.  In this sense, total absolute curvature is a somewhat stronger invariant than total curvature. \\ 

Proposition \ref{atcdeterminesdegree} implies that the total absolute curvatures of complex submanifolds of $\C P^{2}$ belong to disjoint intervals in $\R_{\geq 2}$, with each interval associated to curves of a fixed degree.  Smooth plane curves of degree $d$, together with their embeddings, belong to a continuous and connected family (parametrized by an open, dense and connected subset of $\C P^{\text{\scriptsize $\binom{d+2}{2}$} - 1}$) so the total absolute curvatures of smooth degree $d$ curves in $\C P^{2}$ form a connected interval in $\R_{\geq 2}$.  By Theorem \ref{curveestimate} and Example \ref{fermatconic}, this interval is a point for $d=1$, is a sub-interval of $[4, 8)$ which includes the endpoint $4$ for $d = 2$, and is a sub-interval of $(2d^{2} - 4d + 4, 2d^{2})$ for $d \geq 3$.  Determining these intervals precisely, describing total absolute curvature as a function on the parameter space of smooth degree $d$ curves and studying the total absolute curvature of singular curves would strengthen these results. \\ 

One can fix a higher-dimensional complex projective space and seek a characterization of the total absolute curvatures of its closed complex submanifolds, along the lines of the results we have sketched above for complex submanifolds of $\C P^{2}$.  However, one can also approach this question from a different point of view: one can fix a compact complex manifold $M$, of projective type, and ask for the total absolute curvatures of all metrics on $M$ which can be induced by holomorphic immersions into complex projective space. 


\end{document}